\documentclass[10pt]{amsart} 

\usepackage[utf8]{inputenc}
\usepackage[T1]{fontenc}
\usepackage{lmodern}
\usepackage{amsmath}
\allowdisplaybreaks
\usepackage{enumitem}
\usepackage{amsfonts}
\usepackage{amssymb}
\usepackage{amsthm}
\usepackage{tikz}
\usepackage{stmaryrd}
\usepackage{comment}
\usepackage[toc]{appendix}
\usepackage{thmtools}
\usepackage{thm-restate}
\usepackage{microtype}
\usetikzlibrary{arrows.meta, positioning}
\newtheorem{theorem}{Theorem}
\newtheorem{cor}{Corollary}
\newtheorem{conj}{Conjecture}
\newtheorem{prop}{Proposition}
\newtheorem{example}{Example}
\theoremstyle{definition}

\theoremstyle{definition}
\newtheorem{lem}{Lemma}
\theoremstyle{definition}
\newtheorem{defi}{Definition}
\theoremstyle{remark}
\newtheorem{rem}{Remark}
\newtheorem{notation}{Notation}
\theoremstyle{remark}

\theoremstyle{plain}
\newtheoremstyle{styleintro}
  {\topsep}   
  {\topsep}   
  {\itshape} 
  {0pt}       
  {\bfseries} 
  {.}         
  {5pt plus 1pt minus 1pt} 
  {\thmname{#1}\thmnote{ #3}} 

\theoremstyle{styleintro}
\newtheorem*{thmintro}{Theorem}
\newtheorem*{conjintro}{Conjecture}
\newcommand{\Z}{\mathbb{Z}}
\newcommand{\N}{\mathbb{N}}
\newcommand{\Q}{\mathbb{Q}}

\newcommand{\C}{\mathbb{C}}

\newcommand*\fonction[5]{
#1  \left\{\begin{alignedat}{2}  &#2  & &\longrightarrow      #3\\
                                &#4 &   &\longmapsto  #5
\end{alignedat} \right. \kern-\nulldelimiterspace}

 \usepackage[all]{xy}
  
\newcommand{\eqn}[0]{\begin{array}{rcl}}
\newcommand{\eqnend}[0]{\end{array} }  	

\newcommand{\Wqt}{\mathcal{W}_{q,t}}

\newcommand{\W}{\mathcal{W}}

\newcommand{\Hqt}{\mathcal{H}_{q,t}}
\newcommand{\sld}{\mathfrak{sl}}
\newcommand{\g}{\mathfrak{g}}
\newcommand{\al}{\alpha}
\newcommand{\om}{\omega}
\newcommand{\el}{\ell}
\newcommand{\on}{\operatorname}
\newcommand{\nc}{\newcommand}

\nc{\ch}{\mbox{ch}}
\nc{\pa}{\partial}
\nc{\arr}{\rightarrow}
\nc{\larr}{\longrightarrow}
\nc{\ri}{\rangle}
\nc{\lef}{\langle}
\nc{\la}{\lambda}
\nc{\ep}{\epsilon}
\nc{\su}{\widehat{\mathfrak{sl}_2}}
\nc{\sw}{{\mathfrak s}{\mathfrak l}}
\nc{\h}{{\mathfrak h}}
\nc{\n}{{\mathfrak n}}
\nc{\G}{\widehat{\g}}
\nc{\De}{\Delta_+}
\nc{\gt}{\widetilde{\g}}
\nc{\Ga}{\Gamma}
\nc{\one}{{\mathbf 1}}
\nc{\z}{{\mathfrak Z}}
\nc{\zz}{{\mathcal Z}}
\nc{\Hh}{{\mathcal H}_\beta}
\nc{\qp}{q^{\frac{k}{2}}}
\nc{\qm}{q^{-\frac{k}{2}}}
\nc{\La}{\Lambda}
\nc{\wt}{\widetilde}
\nc{\qn}{\frac{[m]_q^2}{[2m]_q}}
\nc{\cri}{_{\on{cr}}}
\nc{\kk}{h^\vee}
\nc{\F}{\mathcal{F}}
\nc{\sun}{\widehat{\sw}_N}
\nc{\hh}{{\mathbf H}_{q,t}(\g)}
\nc{\HH}{{\mathcal H}_{q,t}}
\nc{\ca}{\wt{{\mathcal A}}_{h,k}(\sw_2)}
\nc{\si}{\sigma}
\nc{\gl}{\widehat{{\mathfrak g}{\mathfrak l}}_2}
\nc{\s}{T}
\nc{\bi}{\bibitem}
\nc{\WW}{\W_\beta}
\nc{\scr}{{\mathbf S}}
\nc{\ab}{{\mathbf a}}
\nc{\rr}{r}
\nc{\ol}{\overline}
\nc{\con}{qt^{-1} + q^{-1}t}
\nc{\den}{q^{\el-1} t^{-\el+1}+ q^{-\el+1} t^{\el-1}}
\nc{\ds}{\displaystyle}
\nc{\B}{B}
\nc{\A}{A^{(2)}_{2\el}}
\nc{\GG}{{\mathcal G}}
\nc{\UU}{{\mathcal U}}
\nc{\MM}{{\mathcal M}}
\nc{\CC}{{\mathcal C}}
\nc{\GL}{^L\G}
\nc{\dzz}{\frac{dz}{z}}
\nc{\Res}{\on{Res}}
\nc{\HHqt}{\mathbf{H}_{q,t}}
\nc{\WWqt}{\mathbf{W}_{q,t}(\g)}
\usepackage{pgfplots}
\pgfplotsset{compat=1.15}
\usepackage{mathrsfs}
\usetikzlibrary{arrows}
\usepackage[hypertexnames=false]{hyperref}
\hypersetup{
    colorlinks=true,  
    linkcolor=blue,   
    citecolor=red,    
    urlcolor=blue,    
}

\author{Hicham Assakaf}
\address{\parbox{\linewidth}{Universit\'e Paris-Cité and Sorbonne Universit\'e, CNRS,
  IMJ-PRG, F-75006, Paris, France\\ \textit{Email adress}: \textnormal{assakaf@imj-prg.fr}}}
\title{Fundamental fields in the deformed $W$-algebras}

\begin{document}
\newcommand{\mlimt}{
The limit $\overline{\mathbf{W}}_{q,1}(\mathfrak{g})$ is embedded in the intersection of the kernels of the operators $S_i$:
$$ \overline{\mathbf{W}}_{q,1}(\mathfrak{g}) \hookrightarrow \bigcap_{i \in I} \text{Ker}(S_i). $$
}
\newcommand{\conjjj}{
    \begin{enumerate}
        \item Let $m\in\mathbf{M}$ be a dominant monomial. Then the algorithm starting from $m$ terminates in finitely many steps (with or without failure).
        \item Let $m \in \mathbf{M}$ be a dominant regular generic monomial. If the algorithm starting from $m$ creates a non-regular monomial, then there exists no field in $\mathbf{W}_{q,t}(\mathfrak{g})$ with $m$ as its unique dominant monomial.
    \end{enumerate}
}

\begin{abstract}
   Let $\mathfrak{g}$ be a simple Lie algebra. 
   Frenkel and Reshetikhin introduced the deformed $W$-algebra $\WWqt$ in \cite{MR1646483}. 
   In this work, we propose a formal reformulation of this definition in a different context.
   In this context, 
   we reformulate and prove the well-definedness of an algorithm \cite{fjm1,kimurahiggsing} inspired by the Frenkel-Mukhin algorithm \cite{FM1} which, 
   starting from a given dominant monomial $m$ satisfying some degree conditions,
   produces elements of the deformed $W$-algebra. 
   Then, we apply this algorithm to construct explicitly some specific elements of $\WWqt$. 
   In particular, we apply this to prove a conjecture of Frenkel and Reshetikhin in \cite{MR1646483} in types $B_\el$, $C_\el$, and for some nodes in other types.
   This framework opens up new possibilities for studying explicitly fields in the deformed $W$-algebra $\WWqt$.
\end{abstract}
\subjclass[2020]{Primary 17B37, 17B69; Secondary 81R10, 17B67}
\keywords{Deformed W-algebras, quantum affine algebras, Frenkel-Mukhin algorithm, $qq$-characters, fundamental representations}
\maketitle
\tableofcontents

\section{Introduction}

\subsubsection*{A brief historical review of $W$-algebras}
The study of $W$-algebras and their deformations has been a central theme at the intersection of conformal field theory, integrable systems, and representation theory. 
Let us present a quick review of their history (see \cite{bouwknegt1993w} for a more complete review). 

The first $W$-algebra was introduced by Zamolodchikov in \cite{zamolodchikov1995infinite} as an extension of the Virasoro algebra.
Then, Fateev and Lukyanov generalized this construction to $\sld_N$ and define the $W_N$-algebras \cite{fateev1988models}.
In \cite{feigin1990quantization}, Feigin and Frenkel prove that this $W_N$-algebra can be obtained by a Drinfeld-Sokolov reduction of the affine algebra $\widehat{\sld_n}$ with respect to a principal nilpotent element of $\g$. They derive explicitly this $W_N$-algebra as a BRST-cohomology algebra. 
For all simple Lie algebras $\g$, the quantum Drinfeld-Sokolov reduction of $\widehat\g$ at level $k$ gives a generalization of the $W_N$-algebra denoted $W_k(\g)$. 
At the critical level $k=-h^\vee$, the $W$-algebra becomes the center of the universal affine vertex algebra $V_{-h^\vee}(\mathfrak{g})$ at the critical level \cite{frenkel1991affine}. 
Moreover, the affine $W$-algebras $W_k(\g)$ present a remarkable duality. If ${}^L\g$ is the Langlands dual of $\g$, then $W_k(\g)$ and $W_{k'}({}^L\g)$ are isomorphic if $r(\beta+h^\vee)(^L\beta+{}^Lh^\vee) =1$, 
where $r$ is the maximal number of edges connecting two vertices in the Dynkin diagram of $\g$ (\cite{feiginfrenkel1992affine}).
Later on, Awata, Kubo, Odake and Shiraishi defined a two-parameters deformation of the Virasoro algebra \cite{quantumvirasoro_AKOS} 
called $q$-Virasoro. 
Then the same authors defined in parallel to Feigin and Frenkel the $q-W_N$-algebras \cite{awata1996quantum,feigin1996quantum} which are deformations of the $W_N$-algebras. 
Finally, Frenkel and Reshetikhin extend this definition and define the two parameters deformed $W$-algebra $\WWqt$ 
associated to a simple Lie algebra $\g$ in \cite{MR1646483}.
These deformed $W$-algebras are defined as the intersection of the kernels of screening operators acting on a 
double deformed Heisenberg algebra $\mathbf{H}_{q,t}(\mathfrak{g})$. 
In \cite{MR1646483}, Frenkel and Reshetikhin highlight that the deformed $W$-algebra $\mathbf{W}_{q,t}(\g)$ is remarkably connected with the analytic Bethe Ansatz in integrable
models associated with the quantum affine algebras $U_q(\widehat{\g})$, $U_t(^L\widehat{\g})$ and $U_t(^L(\widehat{{}^L\g}))$, 
where $\widehat{\g}$ denotes the corresponding affine Kac-Moody algebra, ${}^L\g$ is the Langlands dual of $\g$, and $U_q(\widehat{\g})$ is the quantum affine algebra associated to $\g$ defined by Drinfeld and Jimbo in \cite{Drinfeldx,jimbo1985q}.\\
In \cite{kimurapestunquiverwalg, kimura2018fractional}, Kimura and Pestun provide a quiver gauge theoretic construction of Frenkel and Reshetikhin's deformed $W$-algebras, 
providing a geometric point of view on the deformed $W$-algebras. 
In \cite{negut}, Negu\c{t} defines another deformed $W$-algebra in type $A$, that deforms the $W$-algebra of $\mathfrak{gl}_{nr}$ with respect to a rectangular nilpotent.
This definition generalizes the definition of Frenkel and Reshetikhin in type $A$. 
Deformations of the $W$-algebra are also studied in various cases, including the supersymmetric case in \cite{sevostyanov,lodin,avan,nieri,fjm0,fjm1,fjm}.\\
While the original construction provided a powerful framework for understanding quantum integrable systems, 
the precise algebraic nature of these algebras and their behavior under classical limits presents significant technical challenges. 
In this document, we define a formal context for this deformed $W$-algebra, providing new tools to explicitly compute its elements.\\

\subsubsection*{The deformed $W$-algebra in a formal context}
Let $\g$ be a simple Lie algebra of rank $\el$. Let $I=\llbracket1,\el\rrbracket$. 
In this work, we aim to set the deformed $W$-algebras $\mathbf{W}_{q,t}(\g)$ defined in \cite{MR1646483} in a formal setting. 
We work in the ring of formal power series $K=\C[[h,\beta]]$ and we fix $q=e^h$, and $t=e^{h\beta}$ two parameters. 
We recall the definitions of the double deformed Heisenberg algebra $\mathcal{H}_{q,t}(\g)$ and the screening operators in \cite{MR1646483} adapted in our context.
Then, $\HHqt(\g)$ denotes the vector space generated by monomials in (respectively fundamental weight-type and simple root-type) variables $Y_i(za),A_i(za)\in \Hqt(\g)\llbracket z^{\pm1}\rrbracket$, $i\in I$, $a\in\C^* q^\Z t^\Z$ with respect to the normally ordered product.
Finally, the deformed $W$-algebra $\WWqt$ denotes the vector subspace of $\HHqt(\g)$ generated by the fields commuting with screening operators $S_i^{\pm}$. In this article, we restrict our study to the \textit{generic} case (that is, fields such that $Y_i(za)$ only appear in its monomials with power $0,\pm1$), as it is sufficient to prove the conjecture below in new cases.  \\
If $\omega_1,\ldots,\omega_\el$ are the fundamental weights of the simple Lie algebra $\g$, then there exist \textit{fundamental} representations $V_{\omega_i}$ of the quantum affine algebra $U_q(\widehat{\g})$ (see \cite{chari1995guide} for more details). In \cite{MR1646483}, Frenkel and Reshetikhin formulate the following conjecture:
\begin{conjintro}[\ref{conj1},(\cite{MR1646483})]
For each $i = 1, \ldots, \ell$, there exists a field $T_i(z)$ in $\mathbf{W}_{q,t}(\mathfrak{g})$, such that $T_i(z) = Y_i(z) +$ the sum of elementary terms of the form
\[
    c(q,t) : Y_i(z) A_{i_1}(zq^{a_1}t^{b_1})^{-1} \cdots A_{i_k}(zq^{a_k}t^{b_k})^{-1} :
\]
(where $c(q,1)$ is a positive integer independent of $q$). Furthermore, the set of weights of these terms counted with multiplicity $c(q,1)$ is the set of weights of the finite-dimensional irreducible representation $V_{\omega_i}$ of $U_q(\widehat{\mathfrak{g}})$ with highest weight $\omega_i$, 
where the weight of $ :\prod_{j}Y_{i_j}(za_j)^{\varepsilon_j}:$
is $\sum_{j} \varepsilon_j \omega_{i_j}.$
\end{conjintro}
This would prove a strong link between the deformed $W$-algebra $\WWqt$ and the representation theory of the quantum affine algebra $U_q(\widehat{\g})$. 
This conjecture is proved for $\g$ of type $A_\el$ (\cite{feigin1996quantum,awata1996quantum,MR1646483}). 
Frenkel and Reshetikhin proved this conjecture in all classical types for $i=1$ in \cite{MR1646483}. 
In \cite{fjm0}, Feigin, Jimbo, Mukhin, and Vilkoviskiy generalize this result for all classical types for $i=1$ to the supersymmetric case.
Bouwknegt and Pilch proved this conjecture for all simple Lie algebras $\g$ of rank $2$ (Appendix A in \cite{MR1633032}). 
Feigin, Jimbo and Mukhin compute fundamental fields in type $\mathfrak{gl}(m|n)$ (see Section 2.4 in \cite{fjm}).\\
However, as far as we know this conjecture has not been proved yet for the other classical types because of the lack of tools to systematically compute the fields in $\WWqt$.

We draw attention to the fact that the elements of this deformed $W$-algebra correspond to Nekrasov's $qq$-characters \cite{nekrasov2016bps,kimurapestunquiverwalg}.
These elements must be distinguished from Nakajima's $(q,t)$-characters \cite{nakajimatanalogsqch,hernandez2003algebraicapproachqtcharacters} and from the interpolating $(q,t)$-characters of Frenkel and Hernandez \cite{frenkelhernandez2011langlands}. 
The latter are commutative algebraic polynomials defined to emulate the behavior of the fields in $\WWqt$.

\subsubsection*{An algorithm to compute explicitly fields in $\WWqt$}
In this document, the reformulation of the definition of $\WWqt$ allows us to explicitly compute its elements using an algorithm inspired by the Frenkel-Mukhin algorithm \cite{FM1} for computing $q$-characters. 
This algorithm is similar to the one used and defined by Feigin, Jimbo, and Mukhin in Section 2.3 of \cite{fjm1} or by Kimura in Section 2.4 of \cite{kimurahiggsing} to compute $qq$-characters, however, its definition is adapted to our context.

Our algorithm works step by step, starting from a \textit{dominant} monomial (that is, a monomial in the variables $(Y_i(za))_{i\in \llbracket 1,\el\rrbracket, a\in \C^*q^\Z t^\Z}$), and at each step, 
multiplying it by variables $A_{i}(za)^{-1}$ until we get an \textit{antidominant} monomial in $(Y_i(za)^{-1})_{i,a}$. 
This procedure is inspired by the Frenkel-Mukhin algorithm procedure for $q$-characters, and is observed to be efficient for computing $qq$-characters 
(see \cite{MR1633032}, Lemma 2.1 of \cite{fjm0}, Section 2.3 of \cite{fjm1}, or again Section 2.4 of \cite{kimurahiggsing}).

However, the intricated structure of the double deformed Heisenberg algebra implies that unlike the set of $q$-characters, the deformed $W$-algebra is not a ring, and we cannot give a complete characterization of the fields in a fixed $\sld_2$-direction. 
Thus, our algorithm differs from the Frenkel-Mukhin algorithm. 
The first difference is the fact that at each step and for each monomial, we have to isolate each \textit{admissible} variable $Y_i(za)$, multiply the monomial by $A_i(zaq^{-r_i}t)^{-1}$, and then define a coefficient for the new monomial.

The second and main difference is the fact that the coefficients are not defined as maxima as in \cite{FM1}, but they are defined as residues of rational functions in $q^{\pm1},t^{\pm1}$.

The difference between our algorithm and the one defined by Feigin-Jimbo-Mukhin algorithm (see Section 2.3 and Theorem 6.1 in \cite{fjm1}) or the one defined by Kimura in Section 2.4 of \cite{kimurahiggsing} is that we treat the spectral parameters differently (see Section \ref{sect5} for more details).

We prove the following theorem which makes this algorithm a key tool to compute elements in $\WWqt$:
\begin{thmintro}[\ref{theoremalgowork}]
    The algorithm is well-defined. Moreover, if it does not \textit{fail} and ends in finitely many steps, then it gives a field $T(z)\in\WWqt$.
\end{thmintro}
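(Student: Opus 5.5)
The strategy is to reduce membership in $\WWqt=\bigcap_{i\in I}\mathrm{Ker}(S_i^{\pm})$ to a family of local conditions, one for each simple direction $i\in I$ and each spectral parameter. We then check that the residue-defined coefficients produced by the algorithm are exactly the ones forced by these local conditions.

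First, compute the action of a screening operator $S_i$ on a single normally ordered monomial $:m(z):$. Following Frenkel--Reshetikhin, $S_i$ is a residue/integral of screening currents $S_i(w)$. The operator product $S_i(w)\,:m(z):$ equals $:S_i(w)m(z):$ times a product of rational functions in $w/(za)$. These functions have simple poles only at points determined by the occurrences of $Y_i(za)^{\pm1}$ in $m$; this uses genericity, so powers are $0,\pm1$. Hence $[S_i,\sum_m c_m :m(z):]$ is a sum of delta-function contributions $\delta(w/(za))$. Each contribution is attached to a pair $(m,a)$ in which $Y_i(za)$ is an admissible variable of $m$.

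The key identity is the standard one: $:S_i(zaq^{-r_i}t)\,m\,A_i(zaq^{-r_i}t)^{-1}:$ and $:S_i(za)\,m:$ coincide up to a shift of the current. So a contribution from $m$ at $Y_i(za)$ can only cancel against a contribution from the monomial $m A_i(zaq^{-r_i}t)^{-1}$, at its new $Y_i^{-1}$ variable.

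Next, group monomials into $i$-strings. These are the classes of monomials that differ by factors $A_i(\cdot)^{-1}$ and lie on the same spectral line $a\,q^{\Z r_i}t^{\Z}$. Within such a class, the vanishing of the coefficient of each independent delta function amounts to the following. For every $m$ and every admissible $Y_i(za)$, the residue of $c_m\,f_{m,a}(w)$ at $w=za$ must equal minus the residue contributed by $c_{mA_i(zaq^{-r_i}t)^{-1}}$. The algorithm defines precisely $c_{mA_i^{-1}}$ as this residue.

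Well-definedness then has two parts. The first is independence of the order in which admissible variables are processed. The second is consistency when a monomial $m'$ is reached from two different pairs $(m,i,a)$ and $(\tilde m,j,b)$. For $i\neq j$ this should follow from the commutation of the rational factors: the residue at $w=za$ in direction $i$ is unaffected by multiplying by $A_j(\cdot)^{-1}$, because $A_j$ contributes no pole at that point in direction $i$. For $i=j$ it should follow from genericity, which makes each admissible $Y_i(za)$ produce a distinct delta function. I would also verify that residues of rational functions in $q^{\pm1},t^{\pm1}$ land in $K=\C[[h,\beta]]$, i.e.\ the denominators are invertible or cancel.

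Finally, assume the algorithm terminates without failure, meaning no monomial gets conflicting coefficients and no non-regular monomial appears. Then every delta contribution in $[S_i,T(z)]$ is paired with a cancelling one. Antidominant monomials have no admissible variables, so they contribute nothing unpaired. Hence $S_iT=0$ for all $i$, and $T(z)\in\WWqt$.

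The main obstacle is the consistency in the second step: a monomial can be created along distinct paths, and its coefficient must be the same along each. In the Frenkel--Mukhin setting this is handled by maxima and $\sld_2$ characters. Here I would instead prove a local two-step lemma: for $i\neq j$, the square of residue operations commutes, via an explicit computation of the product of the two rational factors. I would then induct on the number of $A^{-1}$ factors, i.e.\ on the weight depth.
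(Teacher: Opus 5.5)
Your plan has genuine gaps in both parts. Two steps you treat as local are in fact global, and the paper's central tool, Lemma \ref{algorithmreverse}, is missing from your argument.

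\textbf{Part (b): the delta terms are not paired one step at a time.} For $S_i^-$, the term $\delta(w/(zaq^{-r_i}))$ occurs in $[M,S_i^-(w)]$ as soon as $d_{i,a}(M)=1$ and $d_{i,aq^{-2r_i}}(M)=0$, whatever $d_{i,at^2}(M)$ is. So non-admissible variables contribute too. The partner of such a term is $N=MA_i(zaq^{-r_i}t^{2s+1})^{-1}\cdots A_i(zaq^{-r_i}t)^{-1}$, where $a,at^2,\dots,at^{2s}$ is the maximal $t^2$-string of $Y_i$'s in $M$. One must apply the difference relation $s+1$ times and check that the telescoped product of the one-step coefficients \eqref{coefalg} equals the required ratio of residues. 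For $S_i^+$ the same happens along $q^{2r_i}$-strings. It is the symmetry of \eqref{coefalg} under $q^{-r_i}\leftrightarrow t$ that lets a single coefficient rule serve both screenings.

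Already for $\sld_2$ and $m=Y(z)Y(zt^2)$, the $S^-$-term of $Y(z)$ cancels against the antidominant monomial two steps down. Meanwhile $mA(zq^{-1}t)^{-1}$ is not regular and is never produced.

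More seriously, you only find partners going downward from positive variables. Every $Y_i(zb)^{-1}$ with $d_{i,bq^{2r_i}}(N)=0$ in a produced monomial $N$ also contributes a delta term. This includes antidominant monomials: they have no admissible variables, but they do have poles. Each such term needs a partner upstream. That is, you must show that $NA_i(zbq^{r_i}t^{-1})$ (more generally the whole reversed string) is produced, with admissible arrows. This is not automatic. The $A_i^{-1}$ that created $Y_i(zb)^{-1}$ may lie many steps higher, and intermediate $A_j^{-1}$'s can remove and restore the pole. This is exactly Lemma \ref{algorithmreverse}, which the paper proves by induction on height with a rank-$2$ case analysis.

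\textbf{Part (a): path independence does not reduce to commuting squares.} Take $\mathfrak{sl}_3$ and $m=Y_1(zu)Y_2(zuq^{-1}t^{-1})$. The algorithm graph consists of exactly two paths of length $4$ from $m$ to $Y_1(zuq^{-4}t^2)^{-1}Y_2(zuq^{-3}t^3)^{-1}$. The would-be common parent of that monomial's two parents contains $Y_1(zuq^{-2})Y_1(zuq^{-2}t^2)^{-1}$, so it is not regular. Your induction on depth therefore has no square to apply to.

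The same example illustrates the previous point. The $S_1^-$-pole of $Y_1(zuq^{-4}t^2)^{-1}$ at the bottom is cancelled only by the parent $Y_1(zuq^{-2})Y_2(zuq^{-3}t)^{-1}Y_2(zuq^{-3}t^3)^{-1}$. Along the other branch, this variable is created by the second arrow, and its pole disappears and then reappears afterwards.

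The paper instead uses Corollary \ref{cycle}, which follows from Lemma \ref{algorithmreverse}. This gives a common ancestor joined to $M$ by two paths coloured only by $i$ and $j$. The paper then computes the product of coefficients along an arbitrary such path in the rank-$2$ subdiagram. It shows this product depends only on the multiset of $A^{-1}$ factors: the same-colour factors $K_{u,r}$ cancel, while $C_u$ and the cross factors $K'_{u,r}=K'_{r,u}$ are order-independent.

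Your stated reason for the case $i\neq j$ is also incorrect. $A_j^{-1}$ does contain $Y_i$'s, and these enter \eqref{coefalg} for the $i$-step; what actually holds is the symmetry of the cross factor. For $i=j$, what is needed is the identity making $K_{u,r}$ cancel, not genericity.

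Finally, you cannot fold ``no conflicting coefficients'' into the hypothesis that the algorithm does not fail. Part (a) asserts that coefficients never conflict, unconditionally.
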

\vskip 3 pt
As an application, we prove Conjecture \ref{conj1} in new cases: 
\vskip 3 pt
\begin{thmintro}[\ref{theoremefinal}]
    Conjecture \ref{conj1} holds in types $A_\ell$ (for all $i\in I$), $B_\ell$ ( for all $i\in I$), $C_\ell$ ( for all $i\in I$), $D_\ell$ ( for $i=1,\ell-1,\ell$), $E_6$ ( for $i=1,5$), $E_7$ ( for $i=6$), $F_4$ ( for $i=1,4$) and $G_2$ ( for $i=1,2$). 
\end{thmintro}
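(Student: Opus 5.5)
The plan is to apply Theorem \ref{theoremalgowork} to the dominant monomial $m = Y_i(z)$ for each listed pair $(\g,i)$. That theorem reduces the problem to three checks: the algorithm starting from $Y_i(z)$ does not fail, it terminates in finitely many steps, and its output $T_i(z)\in\WWqt$ has the weight structure required by Conjecture \ref{conj1}.

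\textbf{Form of the terms.} This part is immediate from the construction. Every monomial produced is $Y_i(z)$ times a product of factors $A_j(zq^{a}t^{b})^{-1}$, because the algorithm only ever multiplies by $A_j(zaq^{-r_j}t)^{-1}$. The weight of such a monomial is therefore $\omega_i$ minus a sum of simple roots.

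\textbf{Comparison with the classical picture.} I would compare the algorithm with the Frenkel--Mukhin algorithm for the $q$-character of $V_{\omega_i}$. For the listed nodes the classical algorithm never fails, and the $q$-character is well understood:
\begin{itemize}
\item for $i=1$ in classical types, minuscule nodes, and types $A$, $B$, $C$, it is given by the tableaux/path descriptions;
\item in the remaining cases ($E_6$ for $i=1,5$, $E_7$ for $i=6$, $F_4$, $G_2$), it is given by explicit finite lists.
\end{itemize}
The goal is to show that the monomials generated by our algorithm are in weight-preserving bijection with the monomials of $\chi_q(V_{\omega_i})$, after specializing $t$ and forgetting the $\beta$-shifts in the spectral parameters. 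I would prove this by induction along the steps.

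\emph{Admissible variables.} At each monomial, the admissible variables $Y_j(za)$ should be exactly the $j$-dominant factors that the Frenkel--Mukhin algorithm would expand, with spectral shifts deformed by powers of $t$.

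\emph{Genericity and regularity.} Since all these monomials are thin and the $Y_j$ appear with exponents $0,\pm1$, they remain generic and regular, so the residue-defined coefficients are well defined. Genericity also prevents accidental coincidences between distinct monomials, which is needed for the coefficients to combine correctly.

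\emph{Termination.} The process ends at the antidominant lowest monomial, so it terminates. Since the number of monomials equals $\dim V_{\omega_i}$ with the correct weights, the weight statement follows, with multiplicities given by $c(q,1)$.

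\textbf{Main obstacle: non-failure and positivity of $c(q,1)$.} Failure means that a monomial is reached from several directions with incompatible coefficients, or that a non-regular monomial appears. In types $B_\el$ and $C_\el$ the representations $V_{\omega_i}$ are not minuscule, and the monomials of weight zero (or of repeated weight) arise along several $\sld_2$-strings. I would therefore:
\begin{itemize}
\item set up an explicit combinatorial parametrization of the monomials of $T_i(z)$ (tableaux, deformed in $t$), including the extra letters $0$ or $\bar\ell$ that produce multiplicity;
\item compute the residue coefficients in closed form by induction on the tableau;
\item verify directly that the two possible ways of reaching a weight-multiplicity monomial give the same coefficient, and that this coefficient specializes at $t=1$ to the classical multiplicity, which is a positive integer.
\end{itemize}
For the exceptional and $D$ cases I would instead run the algorithm explicitly, since the modules are small. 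I would likely also use the observation that for minuscule or quasi-minuscule nodes no $\sld_2$-string has length exceeding $2$, so every coefficient is forced to be $1$ or a simple residue.
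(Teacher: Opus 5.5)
Your construction half agrees with the paper: apply Theorem \ref{theoremalgowork} to $Y_i(z)$, parametrize the candidate monomials by $t$-deformed tableaux, and show by induction on the height that the algorithm produces exactly these monomials. Three corrections there.
\begin{enumerate}
\item Regularity and genericity do not follow from thinness or from exponents being $0,\pm1$. Regularity is a condition on the relative spectral parameters of the \emph{deformed} monomial. The monomial $m_4$ of Remark \ref{faild4} specializes to a classical monomial with all exponents $\pm1$, yet it is non-regular. The paper therefore checks regularity case by case; in type $C_\el$, cases (f) and (g), it is exactly the constraint $k\neq\el+\alpha-\beta+1$ in the definition of $S$ that excludes the non-regular configurations.
\item Reaching a monomial along two paths with incompatible coefficients is not a failure mode. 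Path-independence is already Theorem \ref{theoremalgowork}(a). Failure means only that a non-generic or non-regular monomial appears.
\item For $D_\el$ with arbitrary $\el$ you cannot just run the algorithm. The paper needs the uniform sign-vector parametrization $b_\sigma$ for the spinor nodes, and cites Frenkel--Reshetikhin for $i=1$.
\end{enumerate}

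The genuine gap is the last step. You propose to obtain the weights with multiplicities, and the fact that $c(q,1)$ is a positive integer independent of $q$, by computing the coefficients in closed form and specializing. You give no reason why these specializations are positive integers, and this is not a formality. By the analysis in Proposition \ref{coeffwelldefprop}, a single step of \eqref{coefalg} with $ab_j^{-1}=t^n$ specializes at $t=1$ to $(n+2)/n$, which need not be an integer.

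Your bookkeeping is also off. In these thin cases each individual $c(q,1)$ must be $1$. A weight multiplicity larger than one comes from several distinct monomials of that weight, not from a single coefficient.

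The missing idea is the one the paper uses to avoid any coefficient computation:
\begin{itemize}
\item By Proposition \ref{limt1}, the limit $\mathbf{p}(T_i(z))$ lies in $\bigcap_{j}\operatorname{Ker}(S_j)$.
\item By Frenkel--Mukhin (Theorem \ref{imageqchar}), this intersection is $\C\otimes_\Z\operatorname{Im}\chi_q$.
\item The limit has $Y_i$ as its unique dominant monomial, with coefficient $1$. (This requires a direct check on the explicit monomials that none of them becomes dominant or merges with $Y_i(z)$ at $t=1$.) Hence it must equal $\chi_q(V_{\omega_i})$.
\end{itemize}
This yields the weights, the multiplicities, and the positivity and integrality of $c(q,1)$ all at once. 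It also makes unnecessary both the explicit knowledge of $\chi_q(V_{\omega_i})$ and your proposed bijection with its monomials. All you need is that the algorithm works from $Y_i(z)$.
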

\vskip 3 pt
Thus, this algorithm provides a powerful tool to construct explicitly fields in $\WWqt$.

Finally, we conjecture that for any dominant monomial $m$, our algorithm terminates in finitely many steps without failing if and only if there exists a generic field in $\WWqt$, without derivatives, and having $m$ as its unique dominant monomial. 
Furthermore, the following conjecture highlights a potential sufficient condition for our algorithm to work (that is, does not fail and ends in finitely many steps) based on the thinness of representations of the quantum affine algebra $U_q(\widehat{\g})$.
\vskip 3 pt
\begin{conjintro}[\ref{conjjjjj}]
Let $m$ be a dominant monomial whose $t=1$ specialization is the unique dominant monomial of a special thin representation of $U_q(\widehat{\g})$, that is, a representation whose $q$-character has a single dominant monomial and all coefficients equal to 1. 
Then, the algorithm successfully generates an element of $\WWqt$ from $m$.
\end{conjintro}
\vskip 3 pt
Theorem \ref{theoremefinal} proves this for fundamental fields.

\subsubsection*{Structure of the paper}

This article is organized as follows:\\ 
In Section \ref{section21}, we recall the definition of the double deformed Heisenberg algebra $\Hqt(\g)$ introduced by Frenkel and Reshetikhin in \cite{MR1646483}, its Fock representation in our context, and the completion of $\Hqt(\g)$. 
We prove that the Fock representation is faithful.\\

In Section \ref{sect3}, we recall the definition of the screening currents and of the deformed $W$-algebra $\WWqt$ as the intersection of the kernels of the screening operators acting on $\HHqt(\g)$. We prove that $\HHqt(\g)$ is isomorphic to a polynomial algebra $K[Y_{i,a}^{\pm1}]$, where $Y_{i,a}$ is identified with $Y_i(za)$ with respect to a specific product.
Then, we compute commutation relations of the screening currents with the generators of $\HHqt(\g)$, and discuss the coefficients of the fields in $\WWqt$.\\

In Section \ref{sect5}, we present the central algorithmic result, including a graph-based representation of the algorithm inspired by \cite{fjm1,frenkel2008qcharactersrepresentationsquantumaffine}, and explicit examples of fields in $\WWqt$. These results are the first main results of this article. \\

In Section \ref{sect6}, we apply our algorithmic results to prove Conjecture \ref{conj1} in types $B_\el,C_\el$, and in other new cases. This proof is the second main result of this article.\\

Finally, in Section \ref{sect7}, we formulate conjectures on the behavior of our algorithm and on a possible sufficient condition for our algorithm to work based on the thinness of representations of the quantum affine algebra $U_q(\widehat{\g})$.

\addtocontents{toc}{\protect\setcounter{tocdepth}{-1}}
\subsection*{Acknowledgements}
I would like to thank David Hernandez for his careful, repeated readings of this paper. 
I also want to thank Edward Frenkel, Taro Kimura, and Evgeny Mukhin for their interest in this work and for their very insightful comments on a preliminary version of this paper. 
\addtocontents{toc}{\protect\setcounter{tocdepth}{2}}

\section{The double deformed Heisenberg algebra $\Hqt(\g)$}\label{section21}
In this section, we recall the definition of the double deformed Heisenberg algebra $\Hqt(\g)$. 
This definition is introduced by Frenkel and Reshetikhin for a simple Lie algebra $\g$ of any type in \cite{MR1646483}. In type $A_\el$, the definition appears in \cite{Asai_1996,awata1996quantum,feigin1996quantum}. Here we give the same definition however to justify the exponential and logarithm, we define the parameters $q,t$ as formal power series in $h$.
\vspace{10 pt}\\
Let $h,\beta$ be generic variables. Let $(q,t)=(e^h,e^{h\beta})\in\C[[h,\beta]]^2$. 
These are formal variables. In all this document, $h$ can be specialized in $\C\backslash i\pi\Q$ and we can write $t=q^\beta$. Moreover, if $\gamma\in\C$, then $q^\gamma:=e^{h\gamma}$ and $t^\gamma:=e^{h\beta\gamma}$ are well defined as formal variables. 
We can now work on the ring $K=\C[[h,\beta]]$.\\ 
\begin{notation}
    For all $n\in\Z$, we set
    $$
    [n]_q = \frac{q^n - q^{-n}}{q - q^{-1}}.
    $$
\end{notation} 
\begin{rem}
    Note that $[n]_q\in\C[[h]]$ and $[n]_q\in n+h\C\llbracket h\rrbracket$.
\end{rem}
Let $\g$ be a simple Lie algebra of rank $\ell$. Let $I=\{1,\ldots,\el\}$. Let $(\cdot,\cdot)$ be the invariant inner product on $\g$ normalized such that for all maximal root $\alpha$, $(\alpha,\alpha)=2$ (see \cite{kac1990infinite}). Let $\{ \al_1,\ldots,\al_\el \}$ and
$\{ \om_1,\ldots,\om_\el \}$ be the sets of simple roots and of fundamental
weights of $\g$, respectively. We have:
$$
(\al_i,\om_j) = \frac{(\al_i,\al_i)}{2} \delta_{i,j}.
$$
Let $r$ be the maximal number of edges connecting two vertices of the
Dynkin diagram of $\g$. Thus, $r=1$ for simply-laced $\g$, $r=2$
for $B_\el, C_\el, F_4$, and $r=3$ for $G_2$. Set
$$
D = \on{diag}(\rr_1,\ldots,\rr_\el),
$$
where
\begin{equation}    \label{di}
\rr_i = r \frac{(\al_i,\al_i)}{2}.
\end{equation}
All $\rr_i$'s are integers; for simply-laced $\g$, $D$ is the identity
matrix.

Now let $C = (C_{ij})_{1\leq i,j\leq \el}$ be the {\em Cartan matrix} of
$\g$. 
Denote by $I = 2I_n-C$ the {\em incidence matrix}, and $\B = (\B_{ij})_{1\leq i,j\leq \el}=DC$ be the symmetrized Cartan matrix:
$$
\B_{ij} = r (\al_i,\al_j).
$$
In \cite{MR1646483}, the authors define $\el \times \el$ matrices $C(q,t)$, $D(q,t)$, and $\B(q,t)$ with coefficients in $K$ by the
formulas
\begin{align}    \label{qtc}
C_{ij}(q,t) &= (q^{\rr_i} t^{-1} + q^{-\rr_i} t) \delta_{i,j} - [I_{ij}]_q,
\\ D(q,t) &= \on{diag}([\rr_1]_q,\ldots,[\rr_\el]_q), \label{qtd} \\
\B(q,t) &= D(q,t) C(q,t). \notag
\end{align}
Thus,
\begin{equation}    \label{qts}
\B_{ij}(q,t) = [\rr_i]_q \left( (q^{\rr_i} t^{-1} + q^{-\rr_i} t)
\delta_{i,j} - [I_{ij}]_q \right).
\end{equation}
It is easy to see that the matrix $\B(q,t)$ is symmetric. For simply-laced
$\g$,
$$
C_{ij}(q,t) = \B_{ij}(q,t) = (q t^{-1} + q^{-1} t) \delta_{i,j} - I_{ij}.
$$
\begin{lem}
    For all $n\geq0$, the matrix $C(q^n,t^n)\in \text{Mat}_{\el\times \el}(K)$ is invertible and its inverse lies in $\text{Mat}_{\el\times \el}(K)$. 
\end{lem}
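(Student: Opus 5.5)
The approach is the standard one for formal power series rings: in $K=\C[[h,\beta]]$ a square matrix is invertible with inverse in $\text{Mat}_{\el\times\el}(K)$ if and only if its determinant is a unit of $K$. Since $K$ is a local ring with maximal ideal $(h,\beta)$, this amounts to showing that the constant term of $\det C(q^n,t^n)$ (obtained by setting $h=\beta=0$) is nonzero. The inverse is then given by the adjugate formula $C(q^n,t^n)^{-1}=\det(C(q^n,t^n))^{-1}\,\text{adj}(C(q^n,t^n))$. The adjugate has entries that are polynomial expressions in the entries of $C(q^n,t^n)$, hence lies in $\text{Mat}_{\el\times\el}(K)$, and $\det^{-1}\in K$.

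First I check that the entries of $C(q^n,t^n)$ are well-defined elements of $K$. We have $q^n=e^{nh}$ and $t^n=e^{nh\beta}$ in $K$. The entry $q^{n\rr_i}t^{-n}+q^{-n\rr_i}t^{n}$ is a sum of exponentials of elements of the maximal ideal, so it lies in $K$. The entries $[I_{ij}]_{q^n}$ lie in $K$ by the Remark above, with $q$ replaced by $q^n$, or simply because $I_{ij}\in\{0,1,2,3\}$ and $[m]_{q}=q^{m-1}+q^{m-3}+\dots+q^{-m+1}$ is a Laurent polynomial in $q$.

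Next I reduce modulo the maximal ideal $(h,\beta)$. Then $q^n\equiv 1$ and $t^n\equiv 1$, so $q^{n\rr_i}t^{-n}+q^{-n\rr_i}t^n\equiv 2$ and $[I_{ij}]_{q^n}\equiv I_{ij}$. Hence $C(q^n,t^n)\equiv 2I_\el-I=C$, the usual Cartan matrix; when $n=0$ this holds exactly. Since reduction modulo the maximal ideal is a ring homomorphism $K\to\C$ and the determinant is polynomial in the entries, the constant term of $\det C(q^n,t^n)$ is $\det C$. The Cartan matrix of a simple Lie algebra is nonsingular (it is $D^{-1}$ times the positive definite Gram matrix $(r(\al_i,\al_j))$), so $\det C\neq 0$. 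Therefore $\det C(q^n,t^n)\in\C^*+(h,\beta)K$ is a unit, with inverse given by the geometric series $c^{-1}\sum_k(-u/c)^k$, which converges $(h,\beta)$-adically.

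There is no real obstacle here. The only points needing care are the justification that units of $K$ are exactly the elements with nonzero constant term, and the observation that the specialization to $h=\beta=0$ commutes with taking determinants. Both are routine.
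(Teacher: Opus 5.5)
Your proposal is correct and follows essentially the same route as the paper: the paper reduces $\det C(q^n,t^n)$ modulo $h$ rather than modulo the maximal ideal $(h,\beta)$ as you do, finds the constant term $\det C\neq 0$, concludes that the determinant is a unit of $\C[[h,\beta]]$, and writes the inverse via the adjugate. Your extra justifications (that the entries of $C(q^n,t^n)$ lie in $K$, and that $\det C\neq 0$ because $DC$ is a positive definite Gram matrix) fill in details the paper leaves implicit.
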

\begin{proof}
    Let $n\in\N$. We consider $d=\det(C(q^n,t^n))\in K$. 
    If $d=0$ then its image in the quotient $\overline d\in K/h$ is null. 
    However, $\overline{d}=\overline{\det C}$. 
    As $\g$ is a simple Lie algebra, the Cartan matrix $C$ is invertible. Thus $\det C\in\C\backslash\{0\}$ and $\overline{d}\neq0$. 
    Thus $C(q^n,t^n)$ is invertible.
    Moreover, $d=d_0+f(h,\beta)$ where $f(h,\beta)$ is a formal power series in $h,\beta$ with no constant terms and $d_0\in\C^*$. 
    Hence, $d$ is invertible in  $\C[[h,\beta]]$, and $$C(q^n,t^n)^{-1}=\frac{1}{\det C(q^n,t^n)}\text{Com}(C(q^n,t^n))^T\in\text{Mat}_{\el\times \el}(K).$$
\end{proof}
\subsection{Heisenberg algebra $\Hqt(\g)$}
In this subsection, we present the definition of the double deformed Heisenberg algebra as introduced in \cite{MR1646483}. This definition appears in type $A_\el$ in \cite{quantumvirasoro_AKOS}. 
\begin{defi}
Let $\HH(\g)$ be the (double-deformed Heisenberg) algebra over the ring $K$ with generators $x_i[n]$, $q^{\xi a_i[0]}$ and $e^{\gamma Q_i} $, with $i\in I$, $\xi\in\C[\beta],\gamma\in\beta^{-1}\C[\beta]$, $n\in\Z\backslash\{0\}$ and relations
\begin{equation}    \label{a}
[x_i[n],x_j[m]] = n \frac{t^n - t^{-n} }{q^n - q^{-n}}\B_{ij}(q^n,t^n),
\delta_{n,-m}
\end{equation}
\begin{equation*}    
[x_i[n],e^{\gamma Q_j}] = 0, \quad n\neq0,
\end{equation*}
\begin{equation*}    
[e^{\gamma Q_j},e^{\gamma' Q_{j'}}] = 0, \quad \gamma,\gamma'\in\beta^{-1}\C[\beta], j,j'\in I,
\end{equation*}
\begin{equation*}    
[q^{\xi a_i[0]},e^{\gamma Q_j}] = (q^{\xi\gamma\beta B_{ij}}-1)e^{\gamma Q_j}q^{\xi a_i[0]},
\end{equation*}
\begin{equation*}    
[q^{\xi a_i[0]},x] = 0,\qquad x\in\langle x_j[n]\rangle_{j\in I,n<0}.
\end{equation*}
The $e^{\gamma Q_j}$ are called the \textit{shift generators}.
\end{defi}

We construct for all $i\in I$, $m\in\Z\backslash\{0\}$,
$$a_i[m]:=\frac{q^m-q^{-m}}{n}x_i[m].$$
It implies the following commutation relation :
$$[a_i[n],a_j[m]] = \frac{1}{n} (q^n - q^{-n}) (t^n - t^{-n}) \B_{ij}(q^n,t^n).
\delta_{n,-m}$$

\begin{rem}\label{gendiff}
   The $(a_i[m])_{i\in I,m\ne0}$ are the generators of the Heisenberg algebra $\Hqt(\g)$ defined in \cite{MR1646483}. 
   Here, because we consider the ring $\C\llbracket h,\beta\rrbracket$ we slightly change the definition and the Frenkel and Reshetikhin's Heisenberg algebra is a strict subalgebra of the Heisenberg algebra of this paper.
   If we work in the field $\C((h,\beta))$ instead of $\C[[h,\beta]]$, then both are equal.
\end{rem}
\vskip 1cm

The generators $a_i[n]$ are ``simple root''-type elements of $\HH(\g)$. As the deformed Cartan matrix is invertible, there is
a unique set of ``fundamental weight''-type element, $y_i[n], q^{\xi y_i[0]},t^{\xi y_i[0]}$ $
i=1,\ldots,\el; n \in \Z\backslash\{0\}$, $\xi\in\C$ satisfying:
\begin{equation}    \label{expr}
\forall 1\leq j\leq\el, \qquad a_j[n] = \sum_{i=1}^\el C_{ij}(q^n,t^n) y_i[n], \qquad 
q^{\xi a_j[0]} = \prod_{i=1}^\el q^{\xi C_{ij} y_i[0]},
\end{equation}
\begin{equation}    \label{ay}
[a_i[n],y_j[m]] = \frac{1}{n} (q^{\rr_i n} - q^{-\rr_i n})(t^n - t^{-n})
\delta_{i,j} \delta_{n,-m}.
\end{equation}
We also put the following relations :
$$\forall \xi,\xi'\in\C,\forall j,j'\in I,\qquad q^{\xi a_j[0]}q^{\xi' a_{j'}[0]}=q^{(\xi +\xi') a_{j'}[0]},\qquad q^{0.a_j[0]}=e^{0.Q_j}=1,$$
and same for $y_j[0]$.\\
They satisfy the following commutation relations:
\begin{equation}    \label{y}
[y_i[n],y_j[m]] = \frac{1}{n} (q^{n} - q^{-n}) (t^n - t^{-n})
M_{ij}(q^n,t^n) \delta_{n,-m},
\end{equation}
where $(M_{ij}(q,t))_{1\leq i,j\leq \el}$ is the matrix
$M(q,t) = D(q,t) C(q,t)^{-1} $.

\subsection{Fock representation $\pi_\mu$ of $\HH(\g)$}
In this section we present the construction of the Fock representations of the double deformed Heisenberg algebra. 
This representation is introduced in \cite{MR1646483,MR1633032}. 
Let $\mathfrak{h}$ be the Cartan subalgebra of $\g$. We define 
$$P:=\C[\beta]\otimes_\C\mathfrak{h}^*=\{\sum_{i=1}^k\gamma_i\mu_i~~;~~\gamma_i\in\C[\beta],~~\mu_i \in\mathfrak{h}^*\}.$$
Let us define the following subalgebras of $\Hqt(\g)$
$$H^-:=\langle x_i[n] \rangle_{i\in I,n<0},\quad H^+:=\langle x_i[n],q^{\xi a_i[0]}\rangle_{i\in I,\xi\in\C[\beta], n>0},$$
$$\text{and}\quad H:=\langle x_i[n],q^{\xi a_i[0]} \rangle_{i\in I,\xi\in\C[\beta], n\neq0}$$
Let $\mu=\sum_{i=1}^k\gamma_i\mu_i\in P$, with $\gamma_i\in \C[\beta]$, $\mu_i\in\mathfrak{h}^*$. Let $\C_\mu$ be a one-dimensional vector space generated by a vector $v_\mu$. We construct a structure of $H^+$-module on $C_\mu$ with the following actions :
\begin{align*}
    \forall j\in I,n>0\qquad x_j[n] v_{\mu} &= 0,\\
    \forall j\in I \qquad q^{\xi a_j[0]}v_\mu&=q^{\xi(\mu,\alpha_j)}v_\mu,
\end{align*}
with $(\mu,\alpha_j):=\sum_{i=1}^k\gamma_i(\mu_i,\alpha_j).$
We can now define the Fock representation $\pi_\mu$ as follows :
$$\pi_\mu:=\text{Ind}_{H^+}^{H}\C^\mu=H^-\otimes_{H^+}\C_\mu,$$
The set $\pi_\mu$ is a representation of $H$ and has a Poincaré-Birkhoff-Witt basis:  
$$(x_{i_1}[n_1]x_{i_2}[n_2]\dots x_{i_m}[n_m]v_\mu)_{n_1\leq\dots\leq n_m<0}$$
Thus, the direct sum $\bigoplus_{\mu\in P}\pi_\mu$ is a representation of $H$. 
To extend it as a representation of $\HH(\g)$ we have to define how the shift generators act on the elements of each $\pi_\mu$. 
The shift generators commute with $(x_i[n])_{n\neq0}$. Hence it is sufficient to define how it acts on each $v_\mu$. 
Let $\mu\in P$, $\gamma\in\beta^{-1}\C[\beta]$, and $i\in I$. Thus $\mu+\gamma \beta r\al_i\in P$ and we define 
$$e^{\gamma Q_i}\cdot v_\mu:=v_{\mu+\gamma\beta r\al_i}.$$

The following proposition proves that this definition is compatible with the commutation relations of $\HH(\g)$ and that it gives a well-defined faithful representation of $\HH(\g)$. 
This may be well-known to experts, but we write a proof for completeness.

\begin{prop}\label{fockrep}
    The vector space $\bigoplus_{\mu\in P}\pi_\mu$ is a well-defined, faithful representation of $\HH(\g)$.
\end{prop}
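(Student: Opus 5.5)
We prove the two claims separately: first that the action on $\bigoplus_{\mu\in P}\pi_\mu$ respects every defining relation of $\HH(\g)$, then that only the zero element of $\HH(\g)$ acts by zero.

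\emph{Well-definedness.} On each summand $\pi_\mu=H^-\otimes_{H^+}\C_\mu$ the subalgebra $H$ already acts, since the induced module of the one-dimensional $H^+$-module $\C_\mu$ is a standard construction. It remains to check that $\C_\mu$ really is an $H^+$-module. This holds because the relations among the positive modes $x_i[n]$ ($n>0$) and the zero modes $q^{\xi a_i[0]}$ are all commutation relations with zero right-hand side, and zero modes multiply as $q^{\xi a_j[0]}q^{\xi' a_j[0]}=q^{(\xi+\xi')a_j[0]}$, compatibly with $\xi\mapsto q^{\xi(\mu,\al_j)}$. We also record that $q^{\xi a_j[0]}$ acts on all of $\pi_\mu$ by the scalar $q^{\xi(\mu,\al_j)}$, because it commutes with $H^-$.

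Next we extend the action to the shift generators by setting
$$e^{\gamma Q_i}\cdot(x\,v_\mu)=x\,v_{\mu+\gamma\beta r\al_i}\qquad (x\in H^-).$$
This is a linear isomorphism $\pi_\mu\to\pi_{\mu+\gamma\beta r\al_i}$ of PBW bases. It commutes with every $x_j[n]$, because the action of $H$ on $x\,v_\mu$ is given by the same formulas in every Fock space: the relations \eqref{a} have scalar right-hand sides independent of $\mu$. The shift generators commute with each other because additions in $P$ commute, and $e^{0\cdot Q_j}=1$ is clear. For the remaining relation we evaluate both sides on $x v_\mu$:
$$q^{\xi a_i[0]}e^{\gamma Q_j}(x v_\mu)=q^{\xi(\mu,\al_i)}\,q^{\xi\gamma\beta r(\al_j,\al_i)}\,x\,v_{\mu+\gamma\beta r\al_j}.$$
Since $r(\al_i,\al_j)=B_{ij}$, this equals $q^{\xi\gamma\beta B_{ij}}e^{\gamma Q_j}q^{\xi a_i[0]}(x v_\mu)$, which is the relation $[q^{\xi a_i[0]},e^{\gamma Q_j}]=(q^{\xi\gamma\beta B_{ij}}-1)e^{\gamma Q_j}q^{\xi a_i[0]}$.

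\emph{Faithfulness.} The plan is to write any element $X\in\HH(\g)$ in a normal form and read off its coefficients from the action. We would use the relations to write
$$X=\sum_{s} e^{\gamma_s\cdot Q}\,p_s(x[n<0])\,p'_s(x[n>0])\,\Phi_s(a[0]),$$
where each $e^{\gamma_s\cdot Q}=\prod_i e^{\gamma_{s,i}Q_i}$ is a product of shift generators with the total shift vectors $\gamma_s$ pairwise distinct, $\Phi_s$ is a finite $K$-combination of distinct zero-mode exponentials, and the products $p_s p'_s$ are PBW monomials. The summands with different shift vectors move $v_\mu$ into different Fock spaces $\pi_{\mu+\beta r\sum_i\gamma_{s,i}\al_i}$; since $\{\al_i\}$ is a basis, distinct vectors $\gamma_s$ give distinct targets. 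So if $X$ acts by zero, each summand acts by zero separately, and it suffices to treat a single shift vector.

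With the shift fixed, the zero modes act on $\pi_\mu$ by $\Phi(\mu)=\sum_k c_k q^{(\xi_k,\mu)}$. Letting $\mu$ range over $P$ and using that distinct characters $\mu\mapsto q^{\xi(\mu,\al)}$ are linearly independent over $K$ (compare leading terms in $h,\beta$), we can isolate each zero-mode exponential. We are then left with an element $Y$ of the Heisenberg part that acts by zero on every Fock space.

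The main obstacle is to show that this forces $Y=0$, i.e.\ faithfulness of the Heisenberg Fock module over $K$, which is not a field. We would act with $Y$ on $x\,v_\mu$ for suitable PBW vectors $x$, and let the annihilators $x_j[n]$, $n>0$, act as $\partial/\partial x_{\cdot}[-n]$ times the scalar from \eqref{a}. Ordering PBW monomials by degree, the lowest-degree part of $p'$ acting on the matching monomial $x$ produces a nonzero multiple of a coefficient of $p'$. The factor involved is
$$n\,\frac{t^n-t^{-n}}{q^n-q^{-n}}\,B(q^n,t^n).$$
Because $(t^n-t^{-n})/(q^n-q^{-n})=\beta+O(h)$, this factor is not invertible in $K$. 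It is, however, a non-zero-divisor, since $K=\C[[h,\beta]]$ is an integral domain and $\det B(q^n,t^n)\neq0$ by the earlier Lemma. Cancelling these non-zero-divisors one coefficient at a time, by induction on degree, shows that all coefficients of $Y$ vanish.
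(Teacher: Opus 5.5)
Your proposal is correct and uses the same three ingredients as the paper's proof: separating shift components through the direct sum over $P$, an annihilation-degree induction acting on matching PBW creation monomials, and linear independence of the zero-mode exponentials as $\mu$ varies (the paper's Vandermonde step). Only the order differs: the paper first evaluates the zero modes as scalars at a fixed $\alpha$ and removes them last.

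Your appeal to $\det B(q^n,t^n)\neq 0$ is genuinely needed. The paper's claim that only $B^+=A^+$ survives overlooks off-diagonal contributions such as $x_2[1]x_1[-1]v_\mu=\frac{t-t^{-1}}{q-q^{-1}}B_{21}(q,t)\,v_\mu\neq 0$ when nodes $1,2$ are adjacent. However, your phrase ``one coefficient at a time'' has the same flaw as written, because $x_j[n]$ acts as a combination of several derivatives, not a scalar times one. To make it literal, first replace the annihilators by $\sum_i (B(q^n,t^n)^{-1})_{ji}\,x_i[n]$. This is legitimate because $B(q^n,t^n)=D(q^n,t^n)C(q^n,t^n)$ is invertible over $K$, and these dual modes act as $n\frac{t^n-t^{-n}}{q^n-q^{-n}}\,\partial/\partial x_j[-n]$.
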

\begin{proof}
To prove that it is a well-defined representation, it is sufficient to verify that this definition is compatible with the commutation relations between $e^{\gamma Q_i}$ and $q^{\xi a_i[0]},t^{\xi a_i[0]}$. 
Let $\xi\in\C[\beta]$, $\mu\in P$, $\gamma\in\beta^{-1}\C[\beta]$, and $i,j\in I$
    \begin{align*}
        e^{\gamma Q_i}\cdot (q^{\xi a_j[0]}v_\mu)&=e^{\gamma Q_i}q^{\xi(\mu,\alpha_j)}v_\mu\\
        &=q^{\xi(\mu,\alpha_j)}e^{\gamma Q_i}v_\mu\\
        &=q^{\xi(\mu,\alpha_j)}v_{\mu+\gamma\beta r\alpha_i}
    \end{align*}
    and
    \begin{align*}
        q^{\xi a_j[0]}e^{\gamma Q_i}\cdot v_\mu&=q^{\xi a_j[0]}v_{\mu+\gamma\beta r\alpha_i}\\
        &=q^{\xi(\mu+\gamma\beta r\alpha_i,\alpha_j)}v_{\mu+\gamma\beta r\alpha_i}\\  
        &=q^{\xi\gamma\beta r(\alpha_i,\alpha_j)}q^{\xi(\mu,\alpha_j)}v_{\mu+\gamma\beta r\alpha_i}\\
        &=q^{\xi\gamma\beta B_{i,j}}q^{\xi(\mu,\alpha_j)}v_{\mu+\gamma\beta r \alpha_i}
    \end{align*}
Hence,
$$[q^{\xi a_j[0]},e^{\gamma Q_i}]\cdot v_\mu =q^{\xi a_j[0]}e^{\gamma Q_i}\cdot v_\mu-e^{\gamma Q_i}\cdot (q^{\xi a_j[0]}v_\mu)$$

Hence, there is an algebra homomorphism $$\rho:\Hqt(\g)\longrightarrow End\left(\bigoplus_{\mu\in P}\pi_\mu\right)$$
sending the generators to the associated action on $\bigoplus_{\mu\in P}\pi_\mu$. 
Let us prove that $\rho$ is injective.\\
Let $X=\sum_{i=1}^pR_iP_i X_i$ be an element of $\Hqt(\g)$ such that $\rho(X)=0$, with for all $i\in\llbracket1,p\rrbracket$,
$$P_i\in K[q^{\xi a_j[0]}]_{\xi\in \C[\beta],j\in I},\qquad R_i\in K[e^{\gamma Q_j}]_{\gamma\in\beta^{-1}\C[\beta],j\in I},$$
and
$$X_i=X_i^-X_i^+,$$
where $X_i^-$ (resp.\ $X_i^+$) is a monomial in the $x_j[n]$ with $n<0$ (resp.\ $n>0$).
Without loss of generality, we assume that the $R_i$ are unitary monomials in the shift operators and 
that the couples $(R_i,X_i)$ are pairwise distinct.

Let $\alpha\in P$. 
For all $i$, let $$v_{\mu_i}= R_i\cdot v_\alpha,\quad \text{and}\quad p_iv_\al=P_i\cdot v_\al.$$
Let us prove that $p_i=0$ for all $i\in\llbracket1,p\rrbracket$.
Let us proceed by induction on the number of positive modes of $X_i$. For all $i\in\llbracket1,p\rrbracket$, let $d_i$ be the number of positive modes in $X_i^+$ : if 
$$X_i^+=x_{i_{1}}[n_{1}]x_{i_{2}}[n_{2}]\dots x_{i_{k}}[n_{k}],$$
with $n_j>0$, then $d_i=k$.\\
Firstly, let us prove that for all $i\in\llbracket1,p\rrbracket$, if $d_i=0$ then $p_i=0$.
By assumption, $\rho(X)=0$. In particular $X$ acts trivially on $v_\al$ :

\begin{align*}
 \sum_{i=1}^p R_iP_iX_i v_\al&=\sum_{i,d_i=0} R_iP_iX_i v_\al,\\
 &=\sum_{i,d_i=0} p_iR_iX_i v_\al,\\
 &=\sum_{i,d_i=0} p_iX_iv_{\mu_i}, \\
 &=0.
\end{align*}
By assumption, the couples $(X_i,\mu_i)$ are pairwise distinct.
Thus, because of the structure of the direct sum, for all monomial $\mu\in P$, $$\sum_{\underset{\mu_i=\mu}{d_i=0}}p_iX_iv_\mu=0.$$ 
Moreover, because of the linear independance of the PBW basis, we get $p_i=0$ for all $i$ such that $d_i=0$.\\
Now, we assume there exists $k>0$ such that for all $i$ such that $d_i<k$, $p_i=0$. 
Let us prove that for all $i$ such that $d_i=k$, we have $p_i=0$.\\
For all opposite monomials $A^+=x_{i_{1}}[n_{1}]x_{i_{2}}[n_{2}]\dots x_{i_{k}}[n_k]$ and $A^-=x_{i_{1}}[-n_{1}]x_{i_{2}}[-n_{2}]\dots x_{i_{k}}[-n_k]$ such that $0<n_1\leq\ldots\leq n_k$,
for all monomial $B^+=x_{j_{1}}[m_{1}]\ldots x_{j_{k'}}[m_{k'}]$ with $m_i>0$ and $k'\geq k$, for all $\mu\in P$, a straightforward computation gives :
$$ B^+A^-v_\mu=c_Av_\mu\neq0\Longleftrightarrow B^+=A^+,$$
with $c_A$ a non-zero element in $K$.\\
By the induction assumption, $$X=\sum_{d_i\geq k}R_iP_iX_i.$$
Let $i_0$ such that $d_{i_0}=k$. Let $A^-$ be the opposite monomial of $X_{i_0}^+$ (by taking the symmetry :$x_i[n]\mapsto x_i[-n]$). 
Let $c_{i}\in K^*$ such that $X_i^+A^-v_{\mu_i}=c_{i}v_{\mu_i}$. We get :
\begin{align*}
    X\cdot (A^-v_0)&=\sum_{d_i\geq k}R_iP_iX_i\cdot (A^-v_\al),\\
    &=\sum_{X_i^+=X_{i_0}^+}R_iP_iX_i\cdot (A^-v_\al),\\
    &=\sum_{X_i^+=X_{i_0}^+}p_iR_iX_i\cdot (A^-v_\al),\\
    &=\sum_{X_i^+=X_{i_0}^+}c_ip_iX_i^-v_{\mu_i}.
\end{align*}
Again, we get for all $\mu\in P$,
$$ \sum_{\underset{\mu_i=\mu}{X_i^+=X_{i_0}^+}}c_ip_iX_i^-v_\mu=0,$$
and by the linear independence of the PBW basis, we get $c_ip_i=0$ then $p_i=0$ for all $i$ such that $X_i^+=X_{i_0}^+$.
This is true for all $i_0$ such that $d_{i_0}=k$. Hence, $p_i=0$ for all $i$ such that $d_i=k$.\\
Hence, by induction, the nullity of $p_i$ for all $i\in\llbracket1,p\rrbracket$.\\
Thus, for all $\al\in P$, $P_i\cdot v_\al=0$. Let $p\in\N$ and for all $1\leq n\leq p$,
let $\xi_n=(\xi_{n,1},\ldots,\xi_{n,\el})\in\C[\beta]^\el$ be pairwise distinct and $C_n\in K^*$ such that,
$$Q=\sum_{n=1}^pC_n\prod_{i=1}^\el q^{\xi_{n,i} a_i[0]}\in K[q^{\xi a_j[0]}]_{\xi\in \C[\beta],j\in I}.$$
It is clear that for all $\al\in P$, 
$$Q\cdot v_\al=\sum_{n=1}^p C_n\prod_{i=1}^\el q^{\xi_{n,i} (\al, \al_{i}[0])}\in K[q^{\xi}]_{\xi\in \C[\beta]}.$$
Let $$\al:=\sum_{i=1}^\el m_i\om_i,$$
where $\om_i$ is the $i$-th fundamental weight. We get 
$$Q\cdot v_\al=\sum_{n=1}^pC_n \prod_{i=1}^\el q^{\xi_{n,i} m_ir_i}=0,\qquad \forall m_1,\ldots, m_\el\in\N.$$
For all $(n,i)\in\llbracket1,p\rrbracket\times\llbracket1,\el\rrbracket$, let $X_{n,i}=q^{\xi_{n,i}r_i}$. 
The variables $X_{n,i}$ and $X_{n',i'}$ are equal if and only if $\xi_{n,i}r_i=\xi_{n',i'}r_{i'}$. 
However, the $\xi_n$ are pairwise distinct and the $r_i$ are non-zero. 
Thus, the $X_{n}=(X_{n,1},\ldots, X_{n,\el})$ are pairwise distinct: $X_n\neq X_{n'}$ if $n\neq n'$. 
$$\sum_{n=1}^pC_n \prod_{i=1}^\el X_{n,i}^{m_i}=0,\qquad \forall m_1,\ldots, m_\el\in\N.$$
For $\el=1$, we get a non-zero Vandermonde determinant, this yields to $C_n=0$ for all $1\leq n\leq p$.\\
Suppose the property holds for $\ell-1$ variables. Let us group the terms in our sum according to the distinct values of the first component $X_{n,1}$. Let $U_1, \dots, U_S$ be the strictly distinct values present in the set $\{X_{1,1}, \dots, X_{p,1}\}$. We can rewrite the sum as:
$$ \sum_{s=1}^S U_s^{m_1} \left( \sum_{n \mid X_{n,1} = U_s} C_n X_{n,2}^{m_2} \dots X_{n,\ell}^{m_\ell} \right) = 0 $$

Let us fix an arbitrary choice of $(m_2, \dots, m_\ell)$. The equation above holds for all $m_1 \in \mathbb{N}$. Using the exact same Vandermonde argument as in the base case, the linear independence of the powers $U_s^{m_1}$ implies that the term inside the bracket must be zero for each $s$:
$$ \sum_{n \mid X_{n,1} = U_s} c_n X_{n,2}^{m_2} \dots X_{n,\ell}^{m_\ell} = 0 $$

This new equation holds for all $(m_2, \dots, m_\ell) \in \mathbb{N}^{\ell-1}$. Since the original tuples ${X}_n$ were distinct, the truncated tuples $(X_{n,2}, \dots, X_{n,\ell})$ within the restricted sum (where $X_{n,1}$ is fixed to $U_s$) are necessarily distinct. By our induction hypothesis on $\ell-1$ variables, all coefficients $C_n$ within this sub-sum must be zero.\\

Repeating this for all $s \in \llbracket 1, S \rrbracket$, we conclude that $C_n = 0$ for all $n \in \llbracket 1, p \rrbracket$. 
This implies that $P_i$ is identically zero as an element of the algebra.\\

This completes the induction step of our main proof. By induction, $P_i = 0$ for all $i \in \llbracket 1, p \rrbracket$. Hence $X = 0$, which proves that $Ker \rho = \{0\}$ and the representation $\rho$ is faithful. 
\end{proof}

Hence, we will denote the ${x_i[m]}$ instead of $\rho(x_i[m])$  as elements of $End(\bigoplus_{\mu\in P}\pi_\mu)$. The previous proposition gives :
$$\Hqt(\g)\hookrightarrow End(\bigoplus_{\mu\in P}\pi_\mu).$$
In particular, the ${y_i[m]}_{i\in I,m<0}$ are algebraically independent as elements of $End(\bigoplus_{\mu\in P}\pi_\mu)$, and so for $(a_i[n])_{i\in I,n\neq0}$ or $(x_i[n])_{i\in I,n\ne0}$.\\

\subsection{Topology on $\Hqt(\g)$}
To define formal series with coefficients in $\Hqt(\g)$ and products of formal series, we have to take a completion of $\Hqt(\g)$. In this subsection, we define a topology on the double deformed Heisenberg algebra and we construct the completion of $\Hqt(\g)$ with respect to this topology.
For all $k\geq 0$, we define the ideals
\begin{align*}
    I_k&:=\left\langle\left\{x_{i_1}[n_1]x_{i_2}[n_2]\dots x_{i_m}[n_m]~~;~n_1\le n_2\le\dots\le n_m,~~\sum_{j=1}^m\max(0,n_j)\geq k\right\}\right\rangle
\end{align*}
$(I_k)_{k\in\Z}$ be the neighborhood base at $0$. This endows a topology on $\Hqt(\g)$.\\
Let $\widehat{\mathcal{H}}_{q,t}(\mathfrak{g})$ be its completion with respect to this topology.
\begin{equation*}
    \widehat{\mathcal{H}}_{q,t}(\mathfrak{g}) := \varprojlim_{k \to \infty} \left( \mathcal{H}_{q,t}(\mathfrak{g}) / I_k \right)
\end{equation*}

This means that an element $X \in \widehat{\mathcal{H}}_{q,t}(\mathfrak{g})$ can be identified with a coherent sequence $(x_k)_{k \ge 1}$, where $x_k \in \mathcal{H}_{q,t}(\mathfrak{g}) / I_k$, such that for all $k$, the natural projection onto $\mathcal{H}_{q,t}(\mathfrak{g}) / I_k$ maps $x_{k+1}$ to $x_k$.

Concretely, the completion consists of potentially infinite sums of monomials that converge to 0 in the topology. An element $X \in \widehat{\mathcal{H}}_{q,t}(\mathfrak{g})$ is a finite sum or a formal series:
\begin{equation*}
    X = \sum_{j=0}^{\infty} c_j M_j
\end{equation*}
where $c_j \in K\backslash\{0\}$ and each $M_j$ is a monomial in the generators, satisfying the following convergence condition:

    For any integer $N > 0$, all but a finite number of terms in the sum belong to the ideal $I_N$.

Based on the definition of $I_k$, this implies that the "annihilation degree" of the terms must tend to infinity:
\begin{equation*}
    \lim_{j \to \infty} \left( \sum_{p=1}^{d_j} \max(0, n_{j,p}) \right) = +\infty,
\end{equation*}
where 
$$M_j=x_{i_{j_1}}[n_{j_1}]x_{i_{j_2}}[n_{j_2}]\dots x_{i_{d_j}}[n_{j_{d_j}}].$$
For all $v\in  \bigoplus_{\mu\in P}\pi_\mu$, there exists $N\geq0$ such that for all $k\ge N$, $$I_k\cdot v=0$$
Hence, if $X = \sum_{j=0}^{\infty} c_j M_j$, then for all $v\in  \bigoplus_{\mu\in P}\pi_\mu$, there exists $M>0$ such that 
$$X\cdot v=\sum_{j=0}^{\infty} c_j M_j\cdot v=\sum_{j=0}^{M} c_j M_j\cdot v\in \bigoplus_{\mu\in P}\pi_\mu$$
We shall now prove that the representation $\rho$ extends to a representation $\widehat{\rho}$ of the completion $\widehat{\mathcal{H}}_{q,t}(\mathfrak{g})$ on $\bigoplus_{\mu\in P}\pi_\mu$ and that this extension is injective.
Again, this may be well-known to experts, but we write a proof for completeness.
\begin{prop}\label{annidegree}
    The extension of $\widehat{\rho}:\widehat{\mathcal{H}}_{q,t}(\mathfrak{g})\rightarrow End(\bigoplus_{\mu\in P}\pi_\mu)$ is injective :
   $$\widehat{\mathcal{H}}_{q,t}(\mathfrak{g})\hookrightarrow End(\bigoplus_{\mu\in P}\pi_\mu).$$ 
\end{prop}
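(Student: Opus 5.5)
The plan has two parts: first check that $\widehat{\rho}$ is well defined, then show that its kernel is zero by reducing to the finite situation already handled in Proposition \ref{fockrep}.

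\emph{Well-definedness.} Let $X=\sum_j c_jM_j\in\widehat{\mathcal{H}}_{q,t}(\mathfrak{g})$. We set $\widehat{\rho}(X)\cdot v:=\sum_{j\le M}c_jM_j\cdot v$, where $M$ is chosen so that $M_j\in I_N$ for $j>M$ and $I_N\cdot v=0$. By the observation preceding the statement, such $N$ and $M$ exist. This value does not depend on the representative, because two coherent sequences defining the same $X$ differ by an element of $I_N$, and $I_N$ kills $v$. Compatibility with products also follows from this stabilization.

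\emph{Injectivity.} Suppose $\widehat{\rho}(X)=0$ and $X\neq 0$. Since $\widehat{\mathcal{H}}_{q,t}(\mathfrak{g})$ is the inverse limit, there is a $k$ whose image $x_k\in\mathcal{H}_{q,t}(\mathfrak{g})/I_k$ is nonzero. Write $X=X_{<k}+X_{\ge k}$, where $X_{<k}$ is a finite sum of normally ordered terms $R_iP_iX_i^-X_i^+$ with positive degree $d(X_i^+):=\sum n_j<k$, and $X_{\ge k}\in \widehat{I_k}$. Finiteness of $X_{<k}$ comes from the convergence condition, provided the ideals are understood in normally ordered form. We may also take $X_{<k}\notin I_k$, with its terms pairwise distinct in the sense of the proof of Proposition \ref{fockrep}.

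\emph{Peeling off low positive degree.} We now run the induction from the proof of Proposition \ref{fockrep}, this time on the \emph{positive degree} $\sum n_j$ of $X_i^+$ rather than on the number of modes. Take $d_0$ minimal among the degrees of $X_{<k}$ and pick a term with $X_{i_0}^+$ of degree $d_0$. Let $A^-$ be its opposite monomial and act on $A^-v_\alpha$. Every term of $X_{\ge k}$, and every term of positive degree $>d_0$, kills $A^-v_\alpha$: indeed $A^-v_\alpha$ has total negative degree $d_0$, so any element of positive degree exceeding $d_0$ annihilates it. Among the terms of degree exactly $d_0$, only those with $X_i^+=X_{i_0}^+$ survive, each with a nonzero scalar $c_i$; this is the pairing computation quoted in the earlier proof. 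The linear independence of the PBW basis, together with the direct-sum decomposition over $\mu\in P$, then forces $p_i=0$ for these terms. The Vandermonde argument from that proof upgrades this to $P_i=0$. Inducting upward in degree up to $k-1$ gives $X_{<k}=0$ modulo $I_k$, which contradicts the choice of $k$.

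\emph{Main obstacle.} The delicate step is the one claiming that any element of $I_k$, and in particular the infinite tail, annihilates vectors of negative degree $<k$. The ideal $I_k$ is two-sided, so its elements have the form $aMb$ with $a,b$ arbitrary, and $b$ could in principle raise the negative degree. I would first show that every element of $I_k$ can be rewritten in normally ordered form $\sum R P X^-X^+$ with $\deg X^+\ge k$. Commuting positive modes to the right only produces central scalars and reduces degrees in pairs, so this should follow from the Heisenberg relations; I would check it carefully using relation \eqref{a}. If the rewriting fails, the fallback is to redefine the filtration directly through the normally ordered degree of $X^+$.
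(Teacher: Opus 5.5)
Your route is the paper's: isolate the finitely many terms of low annihilation degree, let the tail die on a test vector of small negative degree, and run the finite argument of Proposition~\ref{fockrep} on the rest. The paper does this once, at the minimal degree present; peeling upward to $k-1$ is equivalent. However, two facts you borrow from that setting fail as stated, and the paper's proof has both defects too.

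\textbf{The tail step.} The obstacle you flag is real, and the rewriting you hope for is false. Read $I_k$ as a two-sided ideal, as you do. Then $x_i[k]\in I_k$ gives
\[
x_i[k]x_i[-k]-x_i[-k]x_i[k]=k\,\frac{t^k-t^{-k}}{q^k-q^{-k}}\,B_{ii}(q^k,t^k)\in I_k .
\]
This central term has positive degree $0$, and it equals $\beta$ times a unit of $K$. Hence $\beta\in I_k$ for every $k\ge1$, with three consequences:
\begin{enumerate}
\item no $I_k$ kills a nonzero vector of the Fock space, which is a free $K$-module;
\item the map $\mathcal{H}_{q,t}(\mathfrak{g})\to\widehat{\mathcal{H}}_{q,t}(\mathfrak{g})$ kills $\beta$;
\item $\widehat{\rho}$ cannot extend $\rho$ at all.
\end{enumerate}
So your fallback is forced. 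It is also what the paper tacitly uses when it claims $I_k\cdot v=0$ for $k\gg0$ and that $X_{>k}\in I_{k+1}$ annihilates $w$.

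Take $I_k$ to be the \emph{left} ideal generated by the normally ordered monomials of positive degree $\ge k$. Moving a positive mode rightwards past $X^-$ only deletes a negative mode, so this left ideal is exactly the $K$-span of the terms $RPX^-X^+$ with $\deg X^+\ge k$. It kills every vector of negative degree $<k$. Right multiplication by a normally ordered term whose negative part has degree $d$ maps $I_k$ into $I_{k-d}$, so the completion is still an algebra. With this definition your well-definedness and tail arguments go through.

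\textbf{The pairing step.} The computation you import from the proof of Proposition~\ref{fockrep} is wrong. The bracket $[x_i[n],x_j[-n]]$ is a multiple of $B_{ij}(q^n,t^n)$, which is nonzero for adjacent $i\neq j$. For instance $x_1[1]x_2[-1]v_\mu\neq0$, so terms of degree $d_0$ with $X_i^+\neq X_{i_0}^+$ also survive on $A^-v_\alpha$.

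Repair it by building the test vector from fundamental-weight modes. We have $y_j[-n]\in\mathcal{H}_{q,t}(\mathfrak{g})$ because $C(q^n,t^n)^{-1}$ has entries in $K$, and \eqref{ay} gives $[x_i[n],y_j[-n]]=[r_i]_{q^n}(t^n-t^{-n})\delta_{ij}$. Let $w=y_{i_1}[-n_1]\cdots y_{i_m}[-n_m]v_\alpha$ use exactly the modes of $X_{i_0}^+=x_{i_1}[n_1]\cdots x_{i_m}[n_m]$. Then a positive monomial of degree $d_0$ sends $w$ to a nonzero element of $Kv_\alpha$ if it equals $X_{i_0}^+$, and to $0$ otherwise; nonvanishing holds because $K$ is a domain of characteristic $0$. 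With this test vector the rest of your argument works: PBW independence, the direct sum over $\mu$, and the Vandermonde step in $\alpha$.
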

\begin{proof}
    We can deduce the injectivity of $\hat{\rho}$ directly from the injectivity of $\rho$ on the subalgebra $\mathcal{H}_{q,t}(\mathfrak{g})$.\\
Let $M = x_{i_1}[n_1]x_{i_2}[n_2]\dots x_{i_m}[n_m]$ be a monomial in the generators of $\mathcal{H}_{q,t}(\mathfrak{g})$. We define the \emph{annihilation degree} of $M$, as the sum of its strictly positive modes:
\begin{equation*}
     \sum_{p=1}^m \max(0, n_p).
\end{equation*}
If $M$ does not contain any strictly positive modes (for instance, if $M$ consists only of creation modes, zero modes, or if $M=1$), we set it to $ 0$.\\

Let $X = \sum_{j=0}^{\infty} c_j M_j \in \widehat{\mathcal{H}}_{q,t}(\mathfrak{g})$ be a non-zero element such that $\hat{\rho}(X) = 0$. 
By the convergence condition of the completion, the total number of positive modes (the annihilation degree) in the monomials $M_j$ tends to infinity. This implies that for any integer $k \ge 0$, there are only finitely many terms in the formal series with an annihilation degree equal to $k$.

Since $X \neq 0$, there is a minimum annihilation degree present in the sum. Let $k \ge 0$ be this minimum degree. We can uniquely decompose $X$ as:
$$X = X_{k} + X_{>k}$$
where $X_{k}$ contains all terms of $X$ with an annihilator degree exactly equal to $k$, and $X_{>k}$ contains the rest of the series (terms with annihilator degree strictly greater than $k$). 

Crucially, because of the limit condition of the topology, $X_{k}$ is a finite sum. Therefore, $X_k$ is a well-defined, non-zero element of the subalgebra $\mathcal{H}_{q,t}(\mathfrak{g})$.

From our previous proof on the subalgebra $\mathcal{H}_{q,t}(\mathfrak{g})$, the injectivity of $\rho$ relies on the fact that a finite operator $X_k$ with annihilation degree $k$ acts non-trivially on at least one excited state $w = A^- v_\lambda$, where $A^-=x_{i_1}[n_1]x_{i_2}[n_2]\dots x_{i_m}[n_m]$ is a monomial verifiying $\sum_in_i=-k$. Thus, $X_k \cdot w \neq 0$.

Now, let us consider the tail $X_{>k}$. By definition, every monomial in $X_{>k}$ has an annihilation degree $\ge k+1$. Therefore, $X_{>k} \in I_{k+1}$. By definition of $w$, any operator from $I_{k+1}$ will annihilate it:
$$X_{>k} \cdot w = 0.$$

Evaluating the full operator $X$ on the vector $w$, the infinite series truncates exactly to the finite part:
$$X \cdot w = (X_k + X_{>k}) \cdot w = X_k \cdot w + 0 \neq 0$$

This contradicts the assumption that $\hat{\rho}(X) = 0$ on the entire space. We conclude that $Ker \hat{\rho} = \{0\}$, and the extended representation remains strictly injective.
\end{proof}
To simplify the notations, we will denote the completion $\Hqt(\g)$ instead of $\widehat{\mathcal{H}}_{q,t}(\mathfrak{g})$.

\paragraph{Monomials in $\Hqt(\g)$}
Let $\mathcal{M}$ be the multiplicative monoid of monomials in the variables $x_i[n]$, $q^{\xi y_i[0]}$, and $e^{\gamma Q_i}$, 
with $i\in I$, $n\in\Z\backslash\{0\}$, $\xi\in\C[\beta]\backslash\{0\}$, and $\gamma\in\beta^{-1}\C[\beta]$
with coefficients in $K$. Thus, an element $M\in\mathcal{M}$ is of the form
$$M=\lambda x_{i_1}[n_1]x_{i_2}[n_2]\dots x_{i_m}[n_m]q^{\xi_1 y_{1}[0]}\ldots q^{\xi_\el y_{\el}[0]}e^{\gamma_1Q_1}\ldots e^{\gamma_\el Q_\el}$$
with $\lambda\in K^*$, $i_1,\ldots,i_m\in I$, $n_1,\ldots,n_m\in\Z\backslash\{0\}$, and $\xi_1,\ldots,\xi_\el\in\C[\beta]$, $\gamma_1,\ldots,\gamma_\el\in\beta^{-1}\C[\beta]$.\\

\section{$\mathbf{H}_{q,t}(\g)$ and $\mathbf{W}_{q,t}(\g)$ }\label{sect3}
In this section, we recall the definition of the deformed $W$-algebra $\WWqt$. 
It was introduced by Frenkel and Reshetikhin in \cite{MR1646483}.
We first introduce some formal power series in $\Hqt(\g)$ and then we recall 
the definition of the screening operators all due to Frenkel and Reshetikhin. 
Finally, we recall the definition of the deformed $W$-algebra as the subalgebra of $\Hqt(\g)$ commuting with the screening operators.

\subsection{Some fields in $\Hqt(\g)[[z^{\pm1}]]$}
In this section, we introduce some formal power series in $\Hqt(\g)\llbracket z^{\pm1}\rrbracket$ due to Frenkel and Reshetikhin (\cite{MR1646483}), that will be useful in the definition of the screening operators and the deformed $W$-algebra. 
We begin by recalling the notion of \textit{fields}.
\begin{defi}
    A \textit{field} $\Phi(z)\in \Hqt(\g)\llbracket z^{\pm1}\rrbracket$ is a formal power series $$\Phi(z)=\sum_{n\in\Z}A_nz^{n},$$
    such that for all $x\in\bigoplus_{\mu\in P}\pi_\mu$, there exists $N\in\Z$ such that for all $n\leq N$, $A_nx=0$.
\end{defi}
\vskip0.5cm
For each $i\in\{1,\dots,\el\}$, we introduce the following symbols:
$$
A_i(z) = t^{2(\rho^\vee,\al_i)} q^{-2r (\rho,\al_i) + 2a_i[0]}
\exp \left( \sum_{m\neq 0} a_i[m] z^{-m} \right),
$$
$$
Y_i(z) = t^{2(\rho^\vee,\om_i)} q^{-2r (\rho,\om_i) + 2 y_i[0]}
\exp \left( \sum_{m\neq 0} y_i[m] z^{-m} \right).
$$
Note that $(\rho^\vee,\al_i) = 1, r(\rho,\al_i) = \rr_i$.\\
$:A_i(z):$ and $:Y_i(z):$ are formal series with coefficients in $\Hqt(\g)$. We have :

$$
:A_i(z): = t^{2(\rho^\vee,\al_i)} q^{-2r (\rho,\al_i) + 2a_i[0]}
:\exp \left( \sum_{m\neq 0} a_i[m] z^{-m} \right):\in \Hqt(\g)[[z^{\pm1}]],
$$
$$
:Y_i(z): = t^{2(\rho^\vee,\om_i)} q^{-2r (\rho,\om_i) + 2 y_i[0]}
:\exp \left( \sum_{m\neq 0} y_i[m] z^{-m} \right):\in \Hqt(\g)[[z^{\pm1}]],
$$
where $:\cdot:$ stands for the normal ordered product (see \cite{frenkel2004vertex}). In general,
$$:\exp\left(\sum_{m\neq 0} x_i[m]z^{-m}\right): ~~=~~\exp\left(\sum_{m< 0} x_i[m]z^{-m}\right)\exp\left(\sum_{m> 0} x_i[m]z^{-m}\right).$$
The \textit{Fourier coefficients} of these formal series lie in $\Hqt$.
\begin{rem}
It is not the same as defining directly $A_i(z)$ and $Y_i(z)$ as the formal series with the normal ordering. Indeed, a priori, we have :
    $$:~:Y_i(z)::Y_i(za):~:~~\neq~~ :Y_i(z)Y_i(za):$$
\end{rem}

\subsection{Screening operators}\label{sectionscreening}
In this section we define the screening currents introduced by Frenkel and Reshetikhin in \cite{MR1646483}.
 The $(-1)^{\text{th}}$ Fourier coefficient of the screening currents are called the screening operators, 
 and the deformed $W$-algebra is a set of fields commuting with the screening operators.\\
 We define $\Hqt'(\g):=\C[[h]]((\beta))\otimes_K\Hqt(\g)$.
\vskip1cm

For each $i\in\{1,\dots,\el\}$, $m\in\Z\backslash\{0\}$ define the modes $s_i^{\pm}[m]$ for $m\in\Z$ by the formulas
\begin{equation}    \label{sm+}
s^+_i[m] = \frac{a_i[m]}{q^{m \rr_i}-q^{-m \rr_i}}=x_i[m]\frac{q^{m }-q^{-m }}{m(q^{m \rr_i}-q^{-m \rr_i})}\in\Hqt(\g), \quad m\neq 0,
\end{equation}
\begin{equation}    \label{sm-}
s^-_i[m] = \frac{a_i[m]}{t^m-t^{-m}}=x_i[m]\frac{q^{m}-q^{-m}}{m(t^{m}-t^{-m})}\in\Hqt'(\g), \quad m\neq 0.
\end{equation}

\begin{rem}
    The element $s_i^-[m]$ does not belong to $\Hqt(\g)$ as it involves negative powers in $\beta$.
\end{rem}

\vskip 1cm

Now define the following symbols :
\begin{align}   
S_i^+(z) & = e^{-Q_i/\rr_i}  \exp \left( \sum_{m\neq 0}
s^+_i[m] z^{-m} \right),\\    
S_i^-(z) & = e^{Q_i/\beta}  \exp \left( - \sum_{m\neq 0}
s^-_i[m] z^{-m} \right)
\end{align}

The {\em screening currents} are the following fields with coefficients in $\Hqt(\g)$ :
\begin{align}    \label{s+}
:S_i^+(z): & = e^{-Q_i/\rr_i}  :\exp \left( \sum_{m\neq 0}
s^+_i[m] z^{-m} \right):=\sum_{m\in\Z}S_{i,m}^{+}z^m \in \Hqt(\g)[[z^{\pm1}]],\\    \label{s-}
:S_i^-(z): & = e^{Q_i/\beta}  :\exp \left( - \sum_{m\neq 0}
s^-_i[m] z^{-m} \right):=\sum_{m\in\Z}S_{i,m}^{-}z^m\in \Hqt'(\g)[[z^{\pm1}]],
\end{align}
where for all $n\in\Z$, for all $i\in\{1,\dots,\el\}$, $S_{i,m}^{+}$ (resp.\ $S_{i,m}^{-}$) is seen as a linear map from $\pi_0$ to $\pi_{-\beta\al_ir/r_i}$ (resp.\ from $\pi_0$ to $\pi_{r\alpha_i}$).
\begin{rem}
In standard references such as \cite{MR1646483} and \cite{MR1633032}, the screening currents include a zero-mode factor $z^{\pm s_i^{\mp}[0]}$ and are defined as follows :
$$:S_i^+(z):  = e^{-Q_i/\rr_i}z^{- s_i^+[0]}  :\exp \left( \sum_{m\neq 0}
s^+_i[m] z^{-m} \right):$$
$$:S_i^-(z):  = e^{Q_i/\beta} z^{ s_i^-[0]} :\exp \left( - \sum_{m\neq 0}
s^-_i[m] z^{-m} \right):$$
However, when acting on the module $\pi_0$, this operator acts as the identity. Consequently, we omit this factor in our definition.
This omission is necessary to ensure mathematical rigor: we require our objects to be formal power series in $z$ with coefficients that are strictly independent of $z$.
Since the status of $z^{\pm s_i^{\mp}[0]}$ as such an object is ambiguous, removing it ensures that the screening currents are well-defined formal series. \\
\end{rem}
They satisfy the difference equations:
\begin{equation}    \label{scr1}
:S^+_i(zq^{-\rr_i}): = t^{-2} q^{2\rr_i} :A_i(z) S^+_i(zq^{\rr_i}):,
\end{equation}
and
\begin{equation}    \label{scr2}
:S^-_i(zt): = t^{-2} q^{2\rr_i} :A_i(z) S^-_i(zt^{-1}):.
\end{equation}

\subsection{The algebra $\mathbf{H}_{q,t}(\g)$}
In this section, we introduce what Frenkel and Reshetikhin call the \textit{deformed chiral algebra} 
(\cite{MR1646483,frenkel1997towards}) $\mathbf{H}_{q,t}(\g)$. It is a vector space spanned by the monomials on the $Y_i(za)$ and their 
derivatives. The double deformed $W$-algebra will be defined as a subspace of $\mathbf{H}_{q,t}(\g)$. 
However, in this paper we will only consider the monomials on the $Y_i(za)$ without any derivative. 
It makes sense as this is a subspace of Frenkel and Reshetikhin's deformed chiral algebra, but the deformed $W$- algebra we obtain is not trivial. 
The second difference with the definition in \cite{MR1646483} is that in Frenkel and Reshetikhin's definition, the spectral parameters lie in $q^\Z t^\Z$.
In our context, we allow all spectral parameters in $\C^* q^\Z t^\Z$ so that the limit $t\to1$ contain more $q$-characters (this will be studied in an upcoming paper).\\

Let $\mathbf{H}_{q,t}(\g)\subset\Hqt\llbracket z^{\pm1}\rrbracket$ be the vector space spanned by formal power series of the form 
$$:Y_{i_1}(za_1)^{\epsilon_1}\dots Y_{i_m}(za_m)^{\epsilon_m}:~\in \Hqt(\g)[[z^{\pm1}]] $$
for $m\geq1$, $\epsilon_i=\pm1$, $a_1,\ldots,a_m\in \C^* q^\Z t^\Z$.\\
\begin{lem}
    The normal ordering $:Y_{i_1}(za_1)^{\epsilon_1}\dots Y_{i_m}(za_m)^{\epsilon_m}:$ is independant on the ordering of the factors.
\end{lem}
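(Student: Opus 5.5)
We unwind the definition of the normally ordered product and reduce everything to the commutativity of a few kinds of generators. By definition, for $\epsilon_k=\pm1$, the normal ordering of $Y_{i_1}(za_1)^{\epsilon_1}\cdots Y_{i_m}(za_m)^{\epsilon_m}$ is the expression obtained by formally multiplying the factors
$$t^{2\epsilon_k(\rho^\vee,\om_{i_k})}q^{-2\epsilon_k r(\rho,\om_{i_k})}\,q^{2\epsilon_k y_{i_k}[0]}\exp\Big(\epsilon_k\sum_{n\neq0}y_{i_k}[n](za_k)^{-n}\Big)$$
and then placing all creation modes ($n<0$) to the left of all annihilation modes ($n>0$). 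Concretely, we will write it as
$$\prod_k c_k\;\prod_k q^{2\epsilon_k y_{i_k}[0]}\;\exp\Big(\sum_k\epsilon_k\sum_{n<0}y_{i_k}[n](za_k)^{-n}\Big)\exp\Big(\sum_k\epsilon_k\sum_{n>0}y_{i_k}[n](za_k)^{-n}\Big),$$
where the $c_k\in K$ are scalars. The plan is to show that each of the four blocks is symmetric in the index $k$.

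The scalar block is trivially symmetric, since $K$ is commutative. For the zero-mode block, the relations $q^{\xi y_j[0]}q^{\xi' y_{j'}[0]}$ imposed after \eqref{ay} show that these zero-mode operators commute among themselves. To make this airtight we can check it in the Fock representation, where they act diagonally on each $v_\mu$, and then use the faithfulness of Proposition \ref{fockrep}, extended to the completion by Proposition \ref{annidegree}.

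The two exponential blocks carry the real content. By \eqref{y}, $[y_i[n],y_j[m]]$ is proportional to $\delta_{n,-m}$, so all $y_i[n]$ with $n<0$ commute pairwise, and likewise all $y_i[n]$ with $n>0$. Hence, inside the creation block, the arguments of the exponentials commute. We then get
$$\prod_k\exp(X_k^-)=\exp\Big(\sum_k X_k^-\Big),$$
which is symmetric in $k$, and the same holds for the annihilation block.

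The main obstacle is to justify these manipulations in the completed algebra rather than as formal identities. Each coefficient of $z^N$ in the creation block must be a finite sum in $\Hqt(\g)$ (up to the $a_k^{-n}$ weights), and the annihilation block must converge in the $I_k$-topology, so that products of commuting exponentials can be rearranged. We would handle this by working degree by degree in $z$, where each coefficient involves finitely many modes of bounded total degree, or alternatively by verifying the equality of operators on each vector $A^-v_\mu$, where only finitely many terms act nontrivially. Injectivity of $\widehat\rho$ (Proposition \ref{annidegree}) then transfers the equality back to $\Hqt(\g)\llbracket z^{\pm1}\rrbracket$.

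Finally, we must make sure that the ordering of the zero-mode and shift factors relative to the exponentials is fixed by convention. The $y_i[0]$-part commutes with all $y_j[n]$ for $n\neq0$, or at least with $H^-$ by the defining relations, so placing it to the left is consistent. Permuting the factors therefore only permutes commuting pieces within each block, which proves the lemma.
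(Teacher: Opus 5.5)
Your proof is correct. The paper states this lemma without proof, treating it as immediate from the definition of the normally ordered product, and your block-by-block argument is exactly the reasoning behind it: the scalars commute, the zero modes commute, and by \eqref{y} the $y_i[n]$ with $n<0$ commute pairwise, as do those with $n>0$. The detour through the Fock representation is only needed for the zero modes. The commutation of same-sign modes already holds in $\Hqt(\g)$ itself, so the identity $\prod_k\exp(X_k^{\pm})=\exp(\sum_k X_k^{\pm})$ holds directly, coefficient by coefficient in $z$.
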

\begin{rem}
    The original definition introduced by Frenkel and Reshetikhin in \cite{MR1646483} is the $K$-vector space spanned by the monomials of the form :
    $$:\partial_z^{n_1}Y_{i_1}(zq^{j_1}t^{k_1})^{\epsilon_1}\dots\partial_z^{n_m}Y_{i_m}(zq^{j_m}t^{k_m})^{\epsilon_m}:~\in \Hqt(\g)[[z^{\pm1}]] $$
with $\epsilon_i=\pm1$.
\end{rem}
\begin{defi}\label{defmonomial}
Let $\mathbf{M}$ be the following set :
$$\mathbf{M}:=\left\{:Y_{i_1}(za_1)^{\epsilon_1}\dots Y_{i_m}(za_m)^{\epsilon_m}: \right\}~\subset \Hqt(\g)[[z^{\pm1}]]$$
for $m\geq1$, $\epsilon_i=\pm1$, $a_1,\ldots,a_m\in \C^* q^\Z t^\Z$.\\ 
An element $m\in\mathbf{M}$ is called a \textit{monomial} in the $Y_i(za)^{\pm1}$.
\end{defi}

\begin{prop}
The map 
\begin{equation} \label{isom}
    \left\{
  \begin{array}{rcl}
    K[Y_{i,a}^{\pm1}]_{i\in I,a\in \C^* q^\Z t^\Z} & \longrightarrow &\Hqt\llbracket z^{\pm1}\rrbracket \\[3mm]
    \sum_{p=1}^d\lambda_p\prod_{i=1}^{d_p}Y_{j_{p,i},a_{p,i}}& \longmapsto &  \sum_{p=1}^d\lambda_p\colon \prod_{i=1}^{d_p}Y_{j_{p,i}}(za_{p,i})\colon \\
  \end{array}\right.
\end{equation}
is an injective linear map. In particular, the fields ${Y_i(za)}_{i\in I,a\in \C^* q^\Z t^\Z}$ are algebraically independent with respect to the normally ordered product.
\end{prop}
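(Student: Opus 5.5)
We must show that a nontrivial linear combination of distinct normally ordered monomials $:\prod Y_{j}(za)^{\pm1}:$ cannot vanish as a formal series in $z$ with coefficients in $\Hqt(\g)$. The plan is to compute a normally ordered monomial explicitly and then extract from it enough independent data. Every monomial $m=\,:\prod_{p} Y_{j_p}(za_p)^{\epsilon_p}:$ has the form
$$ m = c_m\, q^{2\sum_p \epsilon_p y_{j_p}[0]}\,\exp\Big(\sum_{n<0}\Big(\sum_p \epsilon_p a_p^{-n} y_{j_p}[n]\Big)z^{-n}\Big)\exp\Big(\sum_{n>0}\Big(\sum_p \epsilon_p a_p^{-n} y_{j_p}[n]\Big)z^{-n}\Big), $$
with $c_m\in K^*$ a product of the constants $t^{2(\rho^\vee,\om_i)}q^{-2r(\rho,\om_i)}$. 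The monomial therefore depends only on the element $\sum_p\epsilon_p[j_p,a_p]$ of the free abelian group on the symbols $[j,a]$, which also gives the order-independence lemma. We then identify $K[Y_{i,a}^{\pm1}]$ with the group algebra of this group, so that distinct Laurent monomials correspond to distinct exponent vectors $(n_{i,a})$.

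Given a relation $\sum_k \lambda_k m_k=0$ with distinct exponent vectors, we act on the vacuum $v_0\in\pi_0$, using Proposition \ref{fockrep} and Proposition \ref{annidegree} to work inside the Fock space. The positive modes annihilate $v_0$, and $q^{\xi y_i[0]}$ acts by a scalar, which is $1$ on $\pi_0$. This leaves
$$ \sum_k \lambda_k c_k \exp\Big(\sum_{n>0}\sum_i f_{k,i}(n)\, y_i[-n] z^{n}\Big)v_0=0, \qquad f_{k,i}(n)=\sum_{a} n^{(k)}_{i,a}\,a^{n}. $$
Since the $y_i[-n]$ for $n>0$ are algebraically independent (noted after Proposition \ref{fockrep}), $\pi_0$ is a polynomial algebra in these variables. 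We pass to the coefficient of $z^{N}$ for each $N$, and equivalently view the relation as a relation among exponentials $\exp(\sum_{n,i} f_{k,i}(n)u_{i,n})$ in infinitely many commuting formal variables $u_{i,n}$. Distinct exponentials are linearly independent over $K$. This can be shown either by restricting to finitely many variables and using a Vandermonde-type argument as in Proposition \ref{fockrep}, or by applying derivations $\partial/\partial u_{i,n}$, which act on the exponentials by distinct scalars, and separating eigenvalues. It then follows that $\lambda_k c_k=0$, hence $\lambda_k=0$, provided the linear forms $(f_{k,i}(n))_{i,n}$ are pairwise distinct.

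The main obstacle is this last step: if two distinct exponent vectors $n^{(k)}\neq n^{(k')}$ are given, we must show that $\sum_a (n^{(k)}_{i,a}-n^{(k')}_{i,a})a^{n}\neq 0$ for some $i$ and some $n>0$. For a fixed $i$, the finitely many spectral parameters $a\in\C^*q^\Z t^\Z$ involved are pairwise distinct elements of $K$ or its extension, with nonzero differences. We would argue that the power sums $\sum_a d_a a^n$ for $n=1,\dots,M$ cannot all vanish unless every $d_a=0$, by the invertibility of the Vandermonde matrix $(a^n)$. This requires the determinant $\prod a\prod(a-b)$ to be nonzero, hence the $a$ to be distinct and nonzero. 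One subtlety needs checking first: that $a=\alpha q^{k}t^{l}$ determines the triple $(\alpha,k,l)$ up to the obvious identifications, so that distinct symbols give distinct elements of $K$. Specializing $h\mapsto 0$ does not suffice, because $a$ reduces to $\alpha$. So we would instead work in the fraction field of $\C[[h,\beta]]$, where the differences of distinct elements are nonzero and the determinant is therefore invertible.
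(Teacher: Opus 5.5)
Your proposal is correct and follows essentially the paper's proof: you kill the positive and zero modes (via the vacuum $v_0\in\pi_0$, where the paper uses the quotient by the left ideal $\Hqt^+(\g)$), use the algebraic independence of the $y_i[-n]$ to reduce to linear independence of distinct exponentials (the paper's first Vandermonde system in the $S_{p,u}$), and then separate distinct monomials by a Vandermonde argument in the powers $a^n$. Your explicit passage to the fraction field of $\C[[h,\beta]]$ makes rigorous a point the paper leaves implicit; the only small imprecision is that $\pi_0$ is a polynomial algebra in the $x_i[-n]$ rather than in the $y_i[-n]$ (the latter are $h$ times an invertible change of variables), but algebraic independence of the $y_i[-n]$ is all you actually use.
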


\begin{proof}
    Let $(M_p)_{ p}$ be a finite set of distinct monomials such that $$\sum_{p=1}^N\lambda_pM_p=0$$
    We will prove that for all $p$, $\lambda_p=0$.
    Let $$M_p=:Y_{j_{p,1}}(za_{p,1})^{m_{p,1}}Y_{j_{p,2}}(za_{p,2})^{m_{p,2}}\cdots Y_{j_{p,d_p}}(za_{p,d_p})^{m_{p,d_p}}:$$
    We define the following left-ideal of the algebra $\Hqt(\g)$ : 
    $$\Hqt^+(\g):=\langle x_i[n],(q^{\xi y_i[0]}-1),(t^{y_i[0]}-1)\rangle_{i\in I,\xi\in\C,n>0}$$
    We define the canonical projection $\Pi:\Hqt(\g)\longrightarrow\Hqt(\g)/\Hqt^+(\g)$. It is a homomorphism of left $\Hqt(\g)-module$. 
    We define its extension to formal series with coefficients in $\Hqt(\g)$ as follows :
    \begin{displaymath}
\Pi:
\left\{
  \begin{array}{rcl}
    \HHqt(\g) & \longrightarrow &\HHqt(\g) \\
    \sum_{n\in\Z}A_{n}z^n & \longmapsto & \sum_{n\in\Z}\Pi(A_{n})z^n \\
  \end{array}
\right.
\end{displaymath}
Then $$\sum_p\lambda_p\Pi(M_p)=0.$$
Furthermore, for all $i\in I$, for all $a\in  \C^* q^\Z t^\Z$,
\begin{align*}
   \Pi(Y_i(za)) &=\Pi\left(t^{2(\rho^\vee,\om_i)} q^{-2r (\rho,\om_i) + 2 y_i[0]}
\exp \left( \sum_{n>0} y_i[-n] (az)^n \right) \exp \left( \sum_{n>0} y_i[n] (az)^{-n} \right)\right)\\
   &=C\exp\left(\sum_{n>0} y_i[-n](az)^n\right),
\end{align*}

with $C=t^{2(\rho^\vee,\om_i)} q^{-2r (\rho,\om_i)}\in K^*$.\\
Thus,
\begin{align*}
    \Pi(M_p)&=C_p\exp\left(\sum_{n>0}\sum_{r=1}^{d_p}m_{p,r}y_{j_{p,r}}[-n]z^n{a_{p,r}}^n\right)\\[3mm]
    &=C_p\left(1+z\sum_{r=1}^{d_p}m_{p,r}y_{j_{p,r}}[-1]{a_{p,r}} +z^2\left[\sum_{r=1}^{d_p}m_{p,r}y_{j_{p,r}}[-2]{a_{p,r}}^2 \right.\right.\\[3mm]
    &\qquad \qquad\qquad 
    +\left.\left. \frac{1}{2}\left(\sum_{r=1}^{d_p}m_{p,r}y_{j_{p,r}}[-1]{a_{p,r}} \right)^2\right]+\dots\right)\\[3mm]
    &=C_p\sum_{n>0}z^n\sum_{\underset{\underset{n_i>0}{n_1+\dots +n_k=n}}{1\leq k\leq n}}\frac{1}{k!}\prod_{j=1}^k\left(\sum_{r=1}^{d_p}m_{p,r}y_{j_{p,r}}[-n_j]{a_{p,r}}^{n_j}\right),
\end{align*}
with $C_p\in K^*$ a non-zero constant depending of $q,t$.\\
It is a linear combination of elements of the form 
$$z^ny_{i_1}[-n_1]^{k_1}y_{i_2}[-n_2]^{k_2}\dots y_{i_s}[-n_s]^{k_s},$$
but the $(y_i[-m])_{i\in I,m>0}$ are algebraically independent in $\Hqt(\g)$. Then for all finite set of tuples $(i_u,n_u,k_u)\in I\times \N^*\times \N^*$ such that the $(i_u,n_u)\in I\times \N^*$ are pairwise distinct, $1\leq u\leq s$, 
$$\sum_p\lambda_pC_p\frac{1}{(\sum_uk_u)!}\prod_{u=1}^s\left(\sum_{r|j_{p,r}=i_u}m_{p,r}{a_{p,r}}^{n_u} \right)^{k_u}=0.$$
Then for all finite set of couples $(i_u,n_u,k_u)\in I\times \N^*\times \N^*$ such that the $(i_u,n_u)\in I\times \N^*$ are pairwise distinct, $1\leq u\leq s$, 
$$\sum_p\lambda_pC_p\prod_{u=1}^s\left(\sum_{r|j_{p,r}=i_u}m_{p,r}{a_{p,r}}^{n_u} \right)^{k_u}=0.$$
Let $S_{p,u}=\sum_{r|j_{p,r}=i_u}m_{p,r}{a_{p,r}}^{n_u}$.
We have $$\sum_{b_1,\dots,b_s\in K^*}b_1^{k_1}b_2^{k_2}\dots b_s^{k_s}\sum_{\underset{\forall u,S_{p,u}=b_u}{p}}\lambda_pC_p=0.$$
This equality holds for all $(k_u)\in(\N^*)^s$. It is a Vandermonde system and it implies $$\sum_{\underset{\forall u,S_{p,u}=b_u}{p}}\lambda_pC_p=0$$ for all $(b_1,\dots,b_s)\in(K^*)^s$.

Let $p\in[[1,N]]$. We want to conclude that $\lambda_p=0$. So we want to prove that this sum contains at most one term. We assume that there exists $p'$ such that for all set of pairwise distinct couples $(i_u,n_u)_u\in (I\times \N^*)^s$, for all $u\in[[1,s]]$, $S_{p,u}=S_{p',u}$. Then for all set of pairwise distinct couples $(i_u,n_u)_u\in (I\times \N^*)^s$, for all $u\in[[1,s]]$, 
$$\sum_{r|j_{p,r}=i_u}m_{p,r}{a_{p,r}}^{n_u}=\sum_{r|j_{p',r}=i_u}m_{p',r}{a_{p',r}}^{n_u}.$$ 
Then
$$\sum_{a\in K^*}\sum_{\underset{a_{p,r}=a}{r|j_{p,r}=i_u}}m_{p,r}{a}^{n_u}=\sum_{a\in K^*}\sum_{\underset{a_{p',r}=a}{r|j_{p',r}=i_u}}m_{p',r}{a}^{n_u},$$ 
and we obtain the following Vandermonde system 
$$\sum_{a\in K^*}\left(\sum_{\underset{a_{p,r}=a}{r|j_{p,r}=i_u}}m_{p,r}-\sum_{\underset{a_{p',r}=a}{r|j_{p',r}=i_u}}m_{p',r}\right){a}^{n_u}=0.$$ 
We fix $i_u$ and we vary $n_u$ to obtain for all $u\in[[1,s]]$, for all $i_u$,
$$\sum_{\underset{a_{p,r}=a}{r|j_{p,r}=i_u}}m_{p,r}=\sum_{\underset{a_{p',r}=a}{r|j_{p',r}=i_u}}m_{p',r}.$$
Each sum contains at most one term. It implies $M_p=M_{p'}$ and $p=p'$.\\
Finally, we can fix a $s-$tuple $(i_u,n_u,b_u)_u\in(I\times \N^*\times K^*)^s$ such that $\lambda_p$ is the unique term in the sum $\sum_{\underset{\forall u,S_{p,u}=b_u}{p}}\lambda_pC_p=0$. Hence $\lambda_pC_p=0$, and 
$$\lambda_p=0.$$
\end{proof}
\begin{cor}
    \begin{enumerate}
        \item The map \eqref{isom} is an isomorphism of vector spaces :
        $$\mathbf{H}_{q,t}(\g)\simeq K[Y_{i,a}^{\pm1}]_{i\in I,a\in  \C^* q^\Z t^\Z},$$
        sending $Y_{i,a}^{\pm1}\mapsto Y_i(za)^{\pm1}$.
        \item The $(A_{i}(za))_{i\in I,a\in \C^* q^\Z t^\Z}$ are algebraically independent in $\mathbf{H}_{q,t}(\g)$ with respect to the normally ordered product.
    \end{enumerate}
\end{cor}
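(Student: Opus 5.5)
Part (1) follows almost directly from the preceding Proposition. The map \eqref{isom} is injective and linear. By definition $\mathbf{H}_{q,t}(\g)$ is the $K$-span of the monomials $:Y_{i_1}(za_1)^{\epsilon_1}\dots Y_{i_m}(za_m)^{\epsilon_m}:$ with $a_j\in\C^*q^\Z t^\Z$, so the image of \eqref{isom} is exactly $\mathbf{H}_{q,t}(\g)$. The only point to check is that every Laurent monomial $\prod Y_{j,a}^{m_{j,a}}$ in $K[Y_{i,a}^{\pm1}]$ is sent into $\mathbf{H}_{q,t}(\g)$, even when exponents exceed $1$ in absolute value or when a variable and its inverse both appear. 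Under \eqref{isom} such a monomial is sent to $:\prod Y_{j}(za)^{m_{j,a}}:$, which we read as the normal ordering of a list of factors $Y_j(za)^{\pm1}$ with repetitions. Since the definition of $\mathbf{M}$ allows repeated spectral parameters and the normal ordering is independent of the order of factors (the Lemma above), this lies in $\mathbf{M}$. Hence \eqref{isom} corestricts to a bijection onto $\mathbf{H}_{q,t}(\g)$. I would also remark that the normally ordered product of monomials corresponds to multiplication in $K[Y_{i,a}^{\pm1}]$. This holds because the exponents $\sum_m y_i[m]z^{-m}a^{-m}$ simply add inside $:\exp(\cdot):$, and likewise the zero-mode factors $q^{2y_i[0]}$ and the scalar prefactors multiply.

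For part (2), I would first express each $A_i(za)$ as a monomial in the $Y$'s. The relation \eqref{expr} gives $a_j[n]=\sum_i C_{ij}(q^n,t^n)y_i[n]$, with $C_{jj}(q^n,t^n)=q^{r_jn}t^{-n}+q^{-r_jn}t^{n}$ and off-diagonal entries $-[I_{ij}]_{q^n}$, which is a sum of terms $q^{kn}$. Substituting into $\sum_n a_j[n](za)^{-n}$ turns each factor $q^{\pm kn}t^{\pm n}$ into a shift of the spectral parameter. Matching the zero modes and scalar prefactors then yields the standard formula
\[
:A_j(za): \;=\; :Y_j(zaq^{-r_j}t)\,Y_j(zaq^{r_j}t^{-1})\prod_{i\neq j}\prod_{k}Y_i(zaq^{k})^{-1}:,
\]
up to a factor in $K^*$, where $k$ runs over the exponents appearing in $[I_{ij}]_q$. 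In particular $A_j(za)\in\mathbf{H}_{q,t}(\g)$, and under the isomorphism of part (1) it corresponds to a Laurent monomial $\mathcal{A}_{j,a}\in K[Y^{\pm1}]$.

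It therefore suffices to show that the Laurent monomials $\mathcal{A}_{j,a}$ are multiplicatively independent, since a polynomial relation among the $A$'s would pull back to one among the $\mathcal{A}$'s in the Laurent ring. This is the main step, and I would argue as in Frenkel–Reshetikhin. Suppose $\prod_{j,a}\mathcal{A}_{j,a}^{n_{j,a}}=1$ with finitely many $n_{j,a}\neq0$. Order the spectral parameters by choosing a total order on $\C^*q^\Z t^\Z$ compatible with the $q^{\Z}t^{\Z}$-grading inside each coset $b\,q^\Z t^\Z$, using for instance the lexicographic order on the exponent of $q$, then of $t$. Take $(j,a)$ with $n_{j,a}\neq0$ and with $a$ maximal for this order. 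The factor $Y_j(zaq^{-r_j}t)$ of $\mathcal{A}_{j,a}$ cannot be cancelled by any other $\mathcal{A}_{j',a'}$ with $a'\le a$. Indeed, the $Y_j$-factors of $\mathcal{A}_{j,a'}$ occur only at $a'q^{\mp r_j}t^{\pm1}$, while those of $\mathcal{A}_{j',a'}$ with $j'\ne j$ occur at $a'q^{k}$ with no $t$-shift. Comparing exponents of $q$ and $t$ rules out a coincidence. This is a contradiction, so all $n_{j,a}=0$. The delicate part is handling the $t$-exponents carefully so that the ordering argument really isolates a non-cancelling factor.
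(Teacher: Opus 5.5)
Part (1) is fine, including your care about repeated spectral parameters and cancelling pairs $Y_j(za)Y_j(za)^{-1}$.

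For part (2) you take a different route from the paper. The paper simply reruns the injectivity argument of the preceding Proposition with the modes $a_i[-n]$ in place of $y_i[-n]$: apply $\Pi$, use the algebraic independence of the $a_i[-n]$, then the Vandermonde steps. That route needs no combinatorics of spectral shifts. You instead transport the question to $K[Y_{i,a}^{\pm1}]$ via part (1) and the Laurent monomials $\mathcal{A}_{j,a}$. That reduction is sound: multiplicative independence makes distinct monomials in the $\mathcal{A}_{j,a}$ distinct Laurent monomials in the $Y$'s, hence linearly independent. The fact you need is also true. However, your proof of it fails as written.

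\textbf{Where the ordering argument breaks.} Take the $q$-exponent as primary key and set $b=aq^{-2r_j}t^{2}$. Then $b$ is strictly smaller than $a$. Yet $\mathcal{A}_{j,b}$ contains $Y_j(zbq^{r_j}t^{-1})=Y_j(zaq^{-r_j}t)$ with exponent $+1$. This is exactly the factor you claim is isolated, with the same sign, so maximality of $a$ gives no contradiction. Your sentence ``comparing exponents of $q$ and $t$ rules out a coincidence'' overlooks precisely the coincidence $a'q^{r_j}t^{-1}=aq^{-r_j}t$.

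Switching to the other factor does not rescue a $q$-first order in non-simply-laced types. In $B_2$ with $i$ short and $j$ long, $\mathcal{A}_{j,at^{-1}}$ contains $Y_i(zaqt^{-1})^{-1}$, while $at^{-1}<a$.

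\textbf{Repair.} Make the $t$-exponent the primary key. Then any $\mathcal{A}_{j'',a''}\neq\mathcal{A}_{j,a}$ with a nonzero exponent on $Y_j(zaq^{-r_j}t)$ is one of two kinds:
\begin{itemize}
\item $\mathcal{A}_{j,aq^{-2r_j}t^{2}}$;
\item $\mathcal{A}_{j'',aq^{-r_j-k}t}$, with $j''$ adjacent to $j$ and $k$ running over the exponents of $[I_{jj''}]_q$.
\end{itemize}
Both have strictly larger $t$-exponent than $a$. Hence their coefficients vanish by maximality, and the exponent of $Y_j(zaq^{-r_j}t)$ in the product is $n_{j,a}\neq0$, which is the desired contradiction.

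Alternatively, restrict to a single coset $bq^{\Z}t^{\Z}$. There the exponent map is multiplication by the matrix $((x^{-r_i}y+x^{r_i}y^{-1})\delta_{ij}-[I_{ij}]_x)_{i,j}$ on $\Z[x^{\pm1},y^{\pm1}]^{\ell}$. Its determinant specializes at $x=y=1$ to $\det C\neq0$, so it is a nonzero Laurent polynomial and the map is injective.
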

\begin{proof}
    \begin{enumerate}
        \item The map $\eqref{isom}$ is surjective by definition of $\mathbf{H}_{q,t}(\g)$.
        \item The proof is the same, replacing $Y_i(za)$ by $A_i(za)$, because of the algebraic independance of the $(a_i[n])_{i\in I,n\in\Z}$ in $\Hqt(\g)$.
    \end{enumerate}
\end{proof}

\begin{defi}
    We also define a degree. 
    We denote $d_{i,a}$ the degree of a field $\Phi(z)$ by taking its $Y_{i,a}$-degree in the image of $\Phi(z)$ in $K[Y_{i,a}^{\pm1}]_{i\in I, a\in \C^* q^\Z t^\Z}$.  
\end{defi}

\subsection{The deformed $W$-algebra $\WWqt$}

We shall now define the deformed $W$-algebra $\WWqt$ as introduced by Frenkel and Reshetikhin in \cite{MR1646483}. 
Firstly, we define the \textit{screening operators} as follows : 
$$S_i^{+}:=S_{i,-1}^{+}\in\Hqt(\g),\qquad S_i^{-}:=S_{i,-1}^{-}\in\Hqt'(\g),$$
where 
$$S_i^{+}(w)=\sum_{m\in\Z}S_{i,m}^{+}w^{-m}\in\Hqt(\g)[[w^{\pm1}]],\qquad S_i^{-}(w)=\sum_{m\in\Z}S_{i,m}^{-}w^{-m}\in\Hqt(\g)'[[w^{\pm1}]].$$ 
It is the Fourier coefficient in front of $w^{-1}$. It is also called \textit{the residue} at $0$ of $S_i^\pm(w)$.
We say that a field $A(z)=\sum A_nz^{-n}\in\Hqt(\g)[[z^{\pm1}]]$ commutes with an operator $B\in\Hqt'(\g)$ if for all $n\in\Z$, $A_n$ commutes with $B$.\\
\begin{defi}
Let $\mathbf{W}_{q,t}(\g)$ be the vector subspace of $\mathbf{H}_{q,t}(\g)$ of fields commuting with the screening operators $S_i^\pm$.\\
The \textit{deformed $W$-algebra associated to the Lie algebra} $\g$ is the subalgebra of $\Hqt(\g)$ generated by the Fourier coefficients of the fields in $\mathbf{W}_{q,t}(\g)$ and is denoted $\Wqt(\g)$.\\
By abuse of notation, both $\mathcal{W}_{q,t}(\g)$ and $\WWqt$ are called \textit{the deformed $W$-algebra}. Remark that $\WWqt$ is not an algebra, but $\Wqt(\g)$ is an algebra.
\end{defi}
\subsection{Some Computations}\label{computation}
In this section, we provide the fundamental computations required for the next sections. 
In particular, we derive explicit expressions for the coefficients within the fields of $\mathbf{W}_{q,t}(\g)$.
\vskip1cm
\subsubsection{Operator Product Expansion (OPE) and difference relations}
We recall the following OPEs from \cite{MR1646483,MR1633032} for all $i,j\in I$, $i\ne j$ :
{\footnotesize
    \begin{align*}
\text{1.~~}:Y_i(z)::S_i^+(w): &= t^{-2}\!\left(\frac{1-t\frac{w}{z}}{1-t^{-1}\frac{w}{z}}\right):Y_i(z)S_i^+(w): &\quad
\text{2.~~}:S^+_i(w)::Y_i(z): &= \frac{1-t^{-1}\frac{z}{w}}{1-t\frac{z}{w}}:S^+_i(w)Y_i(z):\\
\text{3.~~}:Y_i(z)::S_i^-(w): &= q^{2r_i}\!\left(\frac{1-q^{-r_i}\frac{w}{z}}{1-q^{r_i}\frac{w}{z}}\right):Y_i(z)S_i^-(w): &\quad
\text{4.~~}:S^-_i(w)::Y_i(z): &= \frac{1-q^{r_i}\frac{z}{w}}{1-q^{-r_i}\frac{z}{w}}:S^-_i(w)Y_i(z):\\[2mm]
 \text{5.~~}:Y_i(z)::S_j^\pm(w): &= :Y_i(z)S_j^\pm(w): = :S_j^\pm(w)::Y_i(z): & &(i\neq j)
\end{align*}}
The first and third (resp.\ second and fourth) rational functions have to be understood as formal power series in positive powers of $\frac{w}{z}$ (resp.\ $\frac{z}{w}$). 
We remark that the first two rational functions are the same as third and fourth. \\
To construct elements in $\WWqt$ means constructing elements $\Phi(z)$ such that for all $i\in I$, $$\Res_w[\Phi(z),S_i^\pm(w)]=0.$$ 
Thus, we will use the difference relations \eqref{scr1}, \eqref{scr2} to get :
\begin{equation}\label{diffrely}
\delta\left(\frac{w}{zq^{r_i}}\right):Y_i(z)^{-1}S_i^-(w):=t^{-2} q^{2r_i}\delta\left(\frac{w}{zq^{r_i}}\right):Y_i(z)^{-1}A_i(zq^{r_i}t^{-1})S_i^-(wt^{-2})
\end{equation}
and
\begin{equation}\label{diffrelyplus}
\delta\left(\frac{w}{zt^{-1}}\right):Y_i(z)^{-1}S_i^+(w):=t^{-2} q^{2r_i}\delta\left(\frac{w}{zt^{-1}}\right):Y_i(z)^{-1}A_i(zq^{r_i}t^{-1})S_i^+(wq^{2r_i})
\end{equation}
\subsubsection{Commutators with the screening}
We deduce the following commutator: let $$M(z)=:\prod_{j=1}^nY_{i_j}(za_j)^{\varepsilon_jm_j}:,$$ with $\varepsilon_i=\pm1$, $m_i>0$, $a_i\in \C^* q^\Z t^\Z$. We have for all $j\in I$ :
{\footnotesize
\begin{equation*}
\begin{aligned}
[: M(z) :, : S_{j}^{+}(w) :] = \Bigg[ &i_{z, w} \prod_{k \mid i_{k}=j}\left(\frac{1-t^{-\varepsilon_{k}} \frac{z a_{k}}{w}}{1-t^{\varepsilon_{k}} \frac{z a_{k}}{w}}\right)^{m_{k}} - i_{w, z} \prod_{k \mid i_{k}=j}\left(\frac{1-t^{-\varepsilon_{k}} \frac{z a_{k}}{w}}{1-t^{\varepsilon_{k}} \frac{z a_{k}}{w}}\right)^{m_{k}} \Bigg] : M(z) S_{j}^{+}(w) :,
\end{aligned}
\end{equation*}}

{\footnotesize
\begin{equation*}
\begin{aligned}
[: M(z) :, : S_{j}^{-}(w) :] = \Bigg[ &i_{z, w} \prod_{k \mid i_{k}=j}\left(\frac{1-q^{\varepsilon_kr_{j}}\frac{za_k}{w}}{1-q^{-\varepsilon_kr_{j}}\frac{za_k}{w}}\right)^{m_k}  - i_{w, z} \prod_{k \mid i_{k}=j}\left(\frac{1-q^{\varepsilon_kr_{j}}\frac{za_k}{w}}{1-q^{-\varepsilon_kr_{j}}\frac{za_k}{w}}\right)^{m_k}\Bigg]:M(z)S^-_j(w):,
\end{aligned}
\end{equation*}}

where $i_{z,w}(F)$ (resp $i_{w,z}(F)$) is the formal power series expansion in $|w|<|z|$ (resp.\ $|w|>|z|$) of the rational function $F$.\\ 
For example, if $i\in I$, $$[:Y_i(z):,:S_i^-(w):]=\delta\left(\frac{w}{z}q^{r_i}\right)(q^{2r_i}-1):Y_i(z)S_i^-(w):.$$
To simplify the computations, we will put some hypotheses on the monomials.
\vskip1cm
Let $:M(z): ~\in \Hqt(\g)[[z^{\pm1}]]$ such that 
$$M(z)=\prod_{i=1}^rY_{i_i}(za_i)\prod_{j=1}^sY_{j_j}(zb_j)^{-1}$$
with $a_i,b_j\in \C^* q^\Z t^\Z$, $i_i,j_j\in I$, and such that 
\begin{itemize}
    \item for all $1\le i\neq j\le r$, $(i_i,a_i)\neq(i_j,a_j)$.
    \item for all $1\le i\neq j\le s$, $(j_i,b_i)\neq(j_j,b_j)$.
    \item for all $1\le i\le r$, for all $1\le j\le s$, $(i_i,a_i)\neq(j_j,b_j)$.
    \item for all $1\le i\le r$, for all $1\le j\le s$, $(i_i,a_i)\neq(j_j,b_jq^{2r_{i_i}})$.
    \item for all $1\le i\le r$, for all $1\le j\le s$, $(i_i,a_i)\neq(j_j,b_jt^{-2})$.
    \item for all $1\le i\le r$, for all $1\le j\le s$, $(i_i,a_i)\neq(j_j,b_jq^{2r_{i_i}}t^{-2})$.
    \item for all $1\le i,j\le r$, if $(i_i,a_i) =(i_j,a_jq^{2r_{i_i}}t^{-2})$, then there exists 
     $1\le u,v\le r$ such that $(i_u,a_u)=(i_i,a_iq^{-2r_{i_i}})$ and $(i_v,a_v)=(i_i,a_it^{2})$.
    \item for all $1\le i,j\le s$, if $(j_i, b_i) =(j_j,b_jq^{2r_{j_i}}t^{-2})$, then there exists 
     $1\le u,v\le r$ such that $(j_u,b_u)=(j_i,b_iq^{-2r_{j_i}})$ and $(j_v,b_v)=(j_i,b_it^{2})$.
\end{itemize}
\begin{rem}
    We will define later that such a monomial is called \textit{generic} and \textit{regular}.\\
    The two first items express the \textit{genericity}, the third one expresses the fact the monomial is reduced (we do not have terms of the form $Y_i(za)Y_i(za)^{-1})$. \\
    All the other items express the \textit{regularity} of the monomial.
\end{rem}
Let $R\subset \llbracket 1,r\rrbracket$ (resp.\ $S\subset \llbracket 1,s\rrbracket$) be the set of indices $i$ such that 
for all $1\le j\le r$, $(i_j,a_j)\neq(i_i,a_iq^{-2r_{i_i}})$ (resp.\ for all $1\le j\le s$, $(j_j,b_j)\neq(j_i,b_iq^{2r_{j_i}})$).
Then we have the following commutators :
$$[:M(z):,:S^-_{k}(w):]=\left[\sum_{i\in R|i_i=k}~C_{k,i}^+\delta\left(\frac{w}{za_i}q^{r_k}\right)+\sum_{j\in S|j_j=k}~C_{k,j}^-\delta\left(\frac{w}{zb_j}q^{-r_{k}}\right)\right]:M(z)S^-_k(w):$$
The coefficients $C_{k,i}^\pm$ are given by the partial fraction decomposition of the rational functions written above. 

\subsubsection{Which coefficients for the monomials in the fields in $\WWqt$ ?}\label{calculcoeffappendix}

We treat here the \textit{generic} and \textit{regular} case.
As $Y_j(za)$ interferes with the screening operator $S_i^\pm$ if and only if $i=j$, we can consider a monomial $M_1$ expressed only in terms of the $Y_i^{\pm1}$. Let
$$:M_1:=:Y_i(za)Y_i(zb_1)\dots Y_i(zb_k)Y_i(zc_1)^{-1}\dots Y_i(zc_\ell)^{-1}:,$$
and 
$$:M_2:=:M_1A_i(zaq^{-r_i}t)^{-1}:=:Y_i(zaq^{-2r_i}t^2)^{-1}Y_i(zb_1)\dots Y_i(zb_k)Y_i(zc_1)^{-1}\dots Y_i(zc_\ell)^{-1}:,$$
where $a,b_j,c_u\in \C^* q^\Z t^\Z$ satisfy the condition of the Proposition \ref{coeffwelldefprop} below.
The presence of $M_2$ in our field serves to cancel the residue of the delta-function $\delta(\frac{w}{z}q^{r_i}a^{-1})$ in the expression of $[:M_1:,S_i^-]$.

By a straightforward computation, the term in front of $\delta(\frac{w}{z}q^{r_i}a^{-1})$ in $[:M_1:,S_i^-]$ is 
$$q^{2r_i(k-\ell+1)}(1-q^{-2r_i})\prod_{j=1}^k\frac{1-q^{-2r_i}ab_j^{-1}}{1-ab_j^{-1}}\prod_{u=1}^\ell\frac{1-ac_u^{-1}}{1-q^{-2r_i}ac_u^{-1}},$$
and the term in front of $\delta(\frac{w}{z}q^{r_i}t^{-2}a^{-1})$ in $[:M_2:,S_i^-]$ is 
$$q^{2r_i(k-\ell-1)}(1-q^{2r_i})\prod_{j=1}^k\frac{1-q^{-2r_i}t^2ab_j^{-1}}{1-t^2ab_j^{-1}}\prod_{u=1}^\ell\frac{1-t^2ac_u^{-1}}{1-q^{-2r_i}t^2ac_u^{-1}}.$$
To cancel the residue of the delta function $\delta(\frac{w}{z}q^{r_i}a^{-1})$ in $[:M_1:,S_i^-]$, the difference relation implies that the coefficient $\lambda_{M_2}$ in front of $:M_2:$ must therefore satisfy:
\begin{equation*}
\begin{split}
   \lambda_{M_2}&\times q^{2r_i}q^{2r_i(k-\ell-1)}(1-q^{2r_i})\prod_{j=1}^k\frac{1-q^{-2r_i}t^2ab_j^{-1}}{1-t^2ab_j^{-1}}\prod_{u=1}^\ell\frac{1-t^2ac_u^{-1}}{1-q^{-2r_i}t^2ac_u^{-1}}\\[4mm]
   &=-\lambda_{M_1}\times q^{2r_i(k-\ell+1)}(1-q^{-2r_i})\prod_{j=1}^k\frac{1-q^{-2r_i}ab_j^{-1}}{1-ab_j^{-1}}\prod_{u=1}^\ell\frac{1-ac_u^{-1}}{1-q^{-2r_i}ac_u^{-1}}.
\end{split}
\end{equation*}
Thus 
\begin{equation}\label{coeff}
    \boxed{\lambda_{M_2}=\lambda_{M_1}\prod_{j=1}^k\frac{(1-q^{-2r_i}ab_j^{-1})(1-t^2ab_j^{-1})}{(1-ab_j^{-1})(1-q^{-2r_i}t^2ab_j^{-1})}\prod_{u=1}^\ell\frac{(1-ac_u^{-1})(1-q^{-2r_i}t^2ac_u^{-1})}{(1-q^{-2r_i}ac_u^{-1})(1-t^2ac_u^{-1})}.}
\end{equation}

\begin{rem}
    We recognize a product of $\mathcal{S}$-functions defined by Kimura and Pestun in \cite{kimura2018fractional} (see equation (3.45) in section 3.5.2).
\end{rem}

\begin{prop}\label{coeffwelldefprop}
Assume $\lambda_{M_1}\in K^\times$. If the parameters $a, b_j, c_u \in \mathbb{C}^*q^\mathbb{Z}t^\mathbb{Z}$ avoid the exact resonance set $ab_j^{-1},ac_u^{-1}\not\in\{1, q^{2r_i}, t^{-2}, q^{2r_i}t^{-2}\}$ for all $j$ and $u$, then $\lambda_{M_2}\in K^\times$.
Moreover, if $ab_j^{-1},ac_u^{-1}\not\in\{t^{-1},q^{2r_i}t^{-1}\}$, then $\lambda_{M_2}/\lambda_{M_1}\in \Q_{>0}+\beta K$. 
\end{prop}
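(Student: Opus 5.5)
The plan is to reduce everything to the behaviour of elementary factors of the form $1-\kappa\, q^{\alpha}t^{\gamma}$ in $K$, with $\kappa\in\C^*$ and $\alpha,\gamma\in\Z$. By \eqref{coeff}, $\lambda_{M_2}/\lambda_{M_1}$ is a finite product of four-term ratios. Each ratio is attached to some $x\in\{ab_j^{-1}\}\cup\{ac_u^{-1}\}$ and is of the form
$$\frac{(1-q^{-2r_i}x)(1-t^2x)}{(1-x)(1-q^{-2r_i}t^2x)}\quad\text{or its inverse.}$$
It therefore suffices to treat one such ratio for a fixed $x=\kappa q^mt^n$, with $\kappa\in\C^*$ and $m,n\in\Z$, and to show two things: the ratio is a unit of $K$ under the first hypothesis, and it lies in $\Q_{>0}+\beta K$ under the second.

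\emph{Case $\kappa\neq1$.} Each of the four factors $1-\kappa q^{\alpha}t^{\gamma}$ has constant term $1-\kappa\neq0$, so it is invertible in $K=\C[[h,\beta]]$, and so is the ratio. Reducing modulo $\beta$ sets $t=1$. The ratio then becomes $\frac{(1-q^{-2r_i}x)(1-x)}{(1-x)(1-q^{-2r_i}x)}=1$, so it lies in $1+\beta K$.

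\emph{Case $\kappa=1$.} This is the main step. We write $1-q^{\alpha}t^{\gamma}=1-e^{hL}$ with $L=\alpha+\beta\gamma$. Setting $E(y)=(e^y-1)/y\in 1+y\C[[y]]$, this factor equals $-hL\,E(hL)$, and $E(hL)$ is a unit. The four exponent pairs are $(m-2r_i,n)$ and $(m,n+2)$ in the numerator, and $(m,n)$ and $(m-2r_i,n+2)$ in the denominator. The resonance hypothesis $x\notin\{1,q^{2r_i},t^{-2},q^{2r_i}t^{-2}\}$ says precisely that none of these pairs is $(0,0)$, so every $L$ is a nonzero element of $\C[\beta]$. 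The powers $h^2$ cancel between numerator and denominator. It remains to handle the linear forms $L$, which are not units when their $\beta$-free part vanishes. The first coordinates appearing in the numerator, $\{m-2r_i,m\}$, coincide with those in the denominator, $\{m,m-2r_i\}$. Hence the number of linear forms of the shape $\beta\cdot(\text{nonzero integer})$ is the same upstairs and downstairs, and these factors of $\beta$ cancel. The remaining forms have nonzero constant term, so the ratio is a unit of $K$. This proves $\lambda_{M_2}\in K^\times$.

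For the positivity statement, we reduce modulo $\beta$ in the case $\kappa=1$. All $E$-factors pair off, since their arguments agree at $\beta=0$ ($E(h(m-2r_i))$ twice and $E(hm)$ twice). So the value modulo $\beta$ is the ratio of the leading coefficients of the $L$'s, and we split into three subcases.
\begin{itemize}
\item If $m\notin\{0,2r_i\}$, the value is $\frac{(m-2r_i)m}{m(m-2r_i)}=1$.
\item If $m=2r_i$, the value is $\frac{n\cdot 2r_i}{2r_i(n+2)}=\frac{n}{n+2}$.
\item If $m=0$, the value is $\frac{(-2r_i)(n+2)}{n(-2r_i)}=\frac{n+2}{n}$.
\end{itemize}
Here $n\neq0,-2$ by the resonance hypothesis. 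Since $n$ is an integer, these quotients are negative only when $n=-1$, which corresponds exactly to $x=q^{2r_i}t^{-1}$ or $x=t^{-1}$, and these are excluded by the second hypothesis. Thus each ratio, and hence their product $\lambda_{M_2}/\lambda_{M_1}$, lies in $\Q_{>0}+\beta K$. The only delicate point is the bookkeeping of the $\beta$-divisible linear forms, which we handle by the explicit matching of first coordinates described above.
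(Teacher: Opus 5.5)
Your proof is correct and follows essentially the same route as the paper: you split according to whether the $\C^*$-part of $x$ equals $1$, use $1-e^{hL}=-hL\,E(hL)$ to cancel the $h^2$ and then the matching factors of $\beta$, and read off the values $1$, $\frac{n}{n+2}$, $\frac{n+2}{n}$, where $n=-1$ corresponds exactly to the excluded cases $x\in\{t^{-1},q^{2r_i}t^{-1}\}$. Your version is slightly more careful than the paper's: you reduce modulo $\beta$, including in the case $\kappa\neq 1$ where the ratio becomes exactly $1$, rather than only evaluating at $(h,\beta)=(0,0)$, and this reduction is what membership in $\Q_{>0}+\beta K$ actually requires.
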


\begin{proof}
It is enough to show that each block in the product evaluates to a non-zero complex constant as $(h,\beta) \to (0,0)$. Let $x$ denote either $ab_j^{-1}$ or $ac_u^{-1}$. 
By definition, $x=Ce^{h(m+n\beta)}$ for some $C\in\mathbb{C}^*$ and $m,n\in\mathbb{Z}$.

If $C\neq 1$, all factors in the block are of the form $1-C+O(h)$. Since $1-C\neq 0$, the block is invertible in $K$.

Now assume $C=1$. Consider the block associated with $b_j$:
 $$N_b(x)=\frac{(1-e^{-2r_ih}x)(1-e^{2\beta h}x)}{(1-x)(1-e^{(-2r_i+2\beta)h}x)}$$ 
Using $1-e^{hA}=-hA(1+O(h))$, we factor out $h^2$ from both the numerator and the denominator. The $h^2$ terms cancel perfectly, and the leading order as $h\to 0$ leaves a rational function in $\beta$:
$$D_b(\beta)=\frac{(m+n\beta-2r_i)(m+(n+2)\beta)}{(m+n\beta)(m-2r_i+(n+2)\beta)}$$ 
Evaluating at $\beta=0$ gives $D_b(0)=1\neq 0$, unless $m=0$ or $m=2r_i$. 

If $m=0$, a factor of $\beta$ cancels out, yielding $\lim_{\beta\to 0} D_b(\beta)=\frac{n+2}{n}$. The hypothesis in the Proposition ensures $n\notin\{0,-2\}$, making this limit a well-defined non-zero constant strictly positive if $n\neq-1$.

If $m=2r_i$, a similar cancellation gives $\lim_{\beta\to 0} D_b(\beta)=\frac{n}{n+2}$, which is again finite, non-zero since $n\notin\{0,-2\}$ by assumption and strictly positive if $n\neq -1$.

The block associated with $c_u$ is simply the reciprocal, $N_c(y)=N_b(y)^{-1}$. By the same analysis, its limit is also a non-zero constant. Since the $h^2$ singularity is fully removed and the evaluation at $(h,\beta)=(0,0)$ is strictly non-zero, every block is invertible in $K$. Thus, $\lambda_{M_2}\in K^\times$.
\end{proof}

\vskip 1cm
\begin{example}
    In the case of $\g=\mathfrak{sl}_2$, we have $r_1=1$. Let us consider the field 
    $$T(z)=:Y(z)Y(zq^{-2}): +\lambda_2:Y(z)Y(zq^{-4}t^2)^{-1}:+\lambda_3:Y_1(zq^{-2}t^{2})^{-1}Y(zq^{-4}t^2)^{-1}.$$
    We have $M_1=:Y(z)Y(zq^{-2}):$ and $M_2=:Y(z)Y(zq^{-4}t^2)^{-1}:$. By \eqref{coeff}, we have 
    $$\lambda_{2}=\lambda_{1}\frac{(q+q^{-1})(qt^{-1}-q^{-1}t)}{q^2t^{-1}-q^{-2}t},$$
    then we get $$\lambda_3=\lambda_2\frac{q^2t^{-1}-q^{-2}t}{(q+q^{-1})(qt^{-1}-q^{-1}t)}=\lambda_1.$$
    Finally, setting $\lambda_1=1$, we get the field 

    \begin{equation*}
        \begin{aligned}
     T(z)=:Y(z)Y(zq^{-2}): &+\frac{(q+q^{-1})(qt^{-1}-q^{-1}t)}{q^2t^{-1}-q^{-2}t}:Y(z)Y(zq^{-4}t^2)^{-1}:\\[4mm]
     &\qquad+ :Y_1(zq^{-2}t^{2})^{-1}Y(zq^{-4}t^2)^{-1}:,
        \end{aligned}    
    \end{equation*}

    and we recover the interpolating function introduced in \cite{frenkel1996quantum,MR1646483} and constructed in a more elementary way in \cite{frenkelhernandez2011langlands} (Section 4.3).
\end{example}

\section{Construction of fields in $\mathbf{W}_{q,t}(\mathfrak{g})$}\label{sect5}

In this section, we explicitly construct elements in $\mathbf{W}_{q,t}(\mathfrak{g})$. 
We adapt to our purpose an algorithm which works step by step, starting from a \textit{dominant} monomial (that is, a monomial in $(Y_i(za))_{i\in \llbracket 1,\el\rrbracket, a\in \C^*q^\Z t^\Z}$), and at each step, 
multiplying it by variables $A_{i}(za)^{-1}$ until we get an \textit{antidominant} monomial in $(Y_i(za)^{-1})_{i,a}$. 
This procedure is inspired by the Frenkel-Mukhin algorithm procedure for $q$-characters \cite{FM1}, and is observed to be efficient for computing $qq$-characters 
(see \cite{MR1633032}, Lemma 2.1 of \cite{fjm0}, Section 2.3 of \cite{fjm1}, or again Section 2.4 of \cite{kimurahiggsing}).

First, we describe this algorithm which, given a \textit{generic dominant regular} monomial $m\in\mathbf{M}$ (defined below), produces a field in $\HHqt(\g)$. 
Then, we prove that this algorithm is well-defined and that the resulting field lies in $\WWqt$. 
Finally, we give examples of the results of this algorithm for various types of Lie algebras.\\

The Frenkel-Mukhin algorithm is inspired by classical Lie algebra representation theory, where characters are constructed using the Weyl group action by subtracting simple roots from the weights.\\
The two main differences between the Frenkel-Mukhin algorithm and the one we propose are the following: 
\begin{itemize}
    \item First, at each step, we do not define an $i$-expansion as Frenkel and Mukhin do in \cite{FM1}. Instead, we expand each \textit{admissible} variable $Y_{i}(za)$ (defined below) one by one. 
    \item The second and main difference is the definition of the coefficients associated with the new monomials created at each step of the algorithm. 
    Indeed, in \cite{FM1}, the coefficients are defined as maxima, while here we define them explicitly as a quotient of residues of rational functions using formula \eqref{coefalg}.
\end{itemize}

Moreover, similar algorithms have been used in the literature to compute $qq$-characters, but they differ from ours in the way we treat the spectral parameters and the transformations. 
These differences are motivated by our proof of the well-definedness of the algorithm and the fact that the resulting field lies in $\WWqt$. 

In Section 2.3 of \cite{fjm1}, Feigin,
Jimbo, and Mukhin define an algorithm to compute $qq$-characters 
by expanding all the $q$-strings at each step, while in our algorithm we treat $q$ and $t$-strings the same way.

Our algorithm is almost the same as the one defined by Kimura in Section 2.4 of \cite{kimurahiggsing}.
The difference is that we only treat the case of generic regular monomials, and we stop the algorithm as soon as a non-generic or non-regular monomial is produced, while Kimura's algorithm continues even in the non-generic or non-regular case.
This is because we do not know how to prove the well-definedness of the algorithm in the non-generic or non-regular case, nor can we guarantee that the resulting field lies in $\WWqt$.

From now on, to simplify notations, we will drop the normal ordering symbol $: \ :$. For all monomials $m, m'\in\mathbf{M}$, the notation $mm'$ will always denote the normal ordered product $:mm':$.

\subsection{Description of the algorithm}

\begin{defi}
    We say that a monomial $M\in\mathbf{M}$ is \textit{dominant} if for all $i\in I$, $a\in\C^* q^\Z t^\Z$, $d_{i,a}(M)\geq 0$.
\end{defi}

\begin{rem}
    It is well-defined since the variables $Y_{i}(za)$ are algebraically independent in $\Hqt(\g)$ with respect to the normal ordered product.
\end{rem} 

Let us firstly introduce some condition we will impose on our fields in this framework.
\begin{defi}
    We say that a monomial $M\in\mathbf{M}$ is \textit{generic} if for all $i\in I,a\in\C^* q^\Z t^\Z$, $d_{i,a}(M)\in\{-1,0,1\}$.\\
    We say that a monomial $m=:\prod_iY_{j_i}(za_i)^{\varepsilon_i}:$ is \textit{regular} when for all $a\in K$, for all $i\in I$, 
    \begin{itemize}
        \item If $d_{i,a}(m)>0$ then $d_{i,aq^{-2r_i}}(m),d_{i,at^{2}}(m)\geq0$,
        \item If $d_{i,a}(m)>0$ and $d_{i,aq^{-2r_i}t^2}(m)>0$ (resp $d_{i,a}(m)<0$ and $d_{i,aq^{-2r_i}t^2}(m)<0$), then $d_{i,aq^{-2r_i}}(m)>0$ and $d_{i,at^2}(m)>0$ (resp.\ $d_{i,aq^{-2r_i}}(m)<0$ and $d_{i,at^2}(m)<0$)
        \item If $d_{i,a}(m)>0$ then $d_{i,aq^{-2r_i}t^{2}}(m)\geq0$.
    \end{itemize} 
    Let $m$ be a regular monomial. We say that $Y_i(za)$ is \textit{admissible} in $m$ if $$d_{i,a}(m)>0 \quad\text{and} \quad d_{i,aq^{-2r_i}}(m)=d_{i,at^2}(m)=0$$ 
\end{defi}

\begin{rem}
    Roughly speaking, to be regular means not to contain $Y_i(za)\allowbreak Y_i(zaq^{-2r_i})^{-1}$, $Y_i(za)\allowbreak Y_i(zat^2)^{-1}$ nor $Y_i(za)\allowbreak Y_i(zaq^{-2r_i}t^{2})^{-1}$, and to contain $Y_i(za)\allowbreak Y_i(zaq^{-2r_i}t^2)$ only if it also contains $Y_i(zaq^{-2r_i})\allowbreak Y_i(zat^2)$.
\end{rem}

Now let us describe an algorithm which, given a generic dominant regular monomial $m$, gives an element $T(m)\in \WWqt$ such that $m$ is the unique dominant monomial appearing in the expression of $T(m)$ :\\
We want to obtain a list of monomials with their associated coefficients.
\begin{itemize}
    \item \textbf{\textit{\underline{Step 0 :}}} We get the first monomial to be $m$ with the coefficient $1$.
    \item \textbf{\textit{\underline{At each step :}}} At the previous step, we obtained a list of monomials with their coefficients :
    $\lambda_1m_1,\lambda_2m_2,\dots,\lambda_pm_p$. \\
    If any $m_i$ is not regular or not generic then the algorithm \textit{fails}, it stops here. 
    Else, for each $1\leq i\leq p$, for each admissible $Y_{j}(za)$ in $m_i$ then if it was not already created, we create the monomial $m'_{i,(j,a)}=:m_iA_{j}(zaq^{-r_j}t)^{-1}:$. \\
    We shall now compute the coefficient associated to $m'_{i,(j,a)}$. We set : 
    $$ m_i=:Y_{j}(za)Y_{j}(zb_1)\dots Y_{j}(zb_k)Y_j(zc_1)^{-1}\dots Y_j(zc_r)^{-1}n_i:$$
    and
    $$ m'_{i,(j,a)}=:Y_{j}(zaq^{-2r_j}t^{2})^{-1}Y_{j}(zb_1)\dots Y_{j}(zb_k)Y_j(zc_1)^{-1}\dots Y_j(zc_r)^{-1}n'_{i,(j,a)}:.$$
    where $n_i$ and $n'_{i,(j,a)}$ are monomials in the variables $Y_u(zd)$ for $u\neq j$ and $d\in K$.\\
    The coefficient $\lambda_{i,k}$ associated to $m'_{i,(j,a)}$ is defined as in \eqref{coeff} :
    \begin{equation}\label{coefalg}
        \boxed{\lambda_{m'_{i,(j,a)}}=\lambda_{m_i}\prod_{j=1}^k\frac{(1-q^{-2r_i}ab_j^{-1})(1-t^2ab_j^{-1})}{(1-ab_j^{-1})(1-q^{-2r_i}t^2ab_j^{-1})}\prod_{u=1}^\ell\frac{(1-ac_u^{-1})(1-q^{-2r_i}t^2ac_u^{-1})}{(1-q^{-2r_i}ac_u^{-1})(1-t^2ac_u^{-1})}.}
    \end{equation}
\end{itemize}
We repeat the steps until no new monomial can be created. 
We denote by $T(m)$ the sum of all the monomials created with their associated coefficients. We have seen that this coefficient lies in $K^*$ (see Proposition \ref{coeffwelldefprop}).\\

\begin{defi}
Let $m\in\mathbf{M}$ be a dominant generic regular monomial. 
We define $\mathbf{A}(m)$ to be the set of monomials created by the algorithm (including the initial dominant generic regular monomial $m$). 

For all $M\in\mathbf{A}(m)$, $i\in I$, and $a\in\C^* q^\Z t^\Z$. We say that the transformation $A_{i}(za)^{-1}$ is \textit{admissible} in $M$ if $Y_i(zaq^{r_i}t^{-1})$ is admissible in $M$.

Let $m$ be a dominant generic regular monomial. We say that $m'\in\mathbf{M}$ \textit{appears} in the algorithm if $$m'\in\mathbf{A}(m).$$
\end{defi}

\begin{example}\label{example1}
     For $\g=\sld_2$, we can construct a field $T(z)\in\WWqt$ starting from the dominant monomial $m=Y(z)Y(zq^{-2})Y(zt^2)$ :
     {\small
    $$T(z)=:Y(z)Y(zq^{-2})Y(zt^2):+\lambda_1:Y(z)Y(zq^{-2})Y(zq^{-2}t^4)^{-1}:+$$
    $$+\lambda_2:Y(z)Y(zq^{-4}t^2)^{-1}Y(zt^2):+\lambda_3:Y(z)Y(zq^{-4}t^2)^{-1}Y(zq^{-2}t^4)^{-1}:+$$
    $$+:Y(zq^{-2}t^2)^{-1}Y(zq^{-4}t^2)^{-1}Y(zq^{-2}t^4)^{-1}:,$$
    }    
    where $$\lambda_1=\frac{(q^4t^2-1)(q^2-t^2)}{(q^4-t^2)(q^2t^2-1)};\quad\lambda_2=\frac{(q^2t^4-1)(q^2-t^2)}{(q^2-t^4)(q^2t^2-1)}; \quad\lambda_3=\frac{(q^2+1)(t^2+1)(q^2-t^2)^2}{(q^4-t^2)(q^2-t^4)}.$$
\end{example}
\begin{rem}\label{faild4}
    We can wonder if the regularity and genericity of the initial monomial $m$ implies the regularity (resp.\ genericity) of all the monomials produced by the algorithm. It is not the case as shown by the following example in type $D_4$ :\\ 
    Let us consider the following indexation of the Dynkin diagram in type $D_4$ :\\
    \begin{center}
    \scalebox{0.7}{
\begin{tikzpicture}[dynkin node/.style={circle, fill=black, inner sep=0pt, minimum size=3pt},connection/.style={thick, draw=black}]

    \node[dynkin node, label=below:2] (2) at (0,0) {};
    
    \node[dynkin node, label=below:1] (1) at (-1.5,0) {};
    \node[dynkin node, label=below:3] (3) at (1.5,0) {};
    \node[dynkin node, label=above:4] (4) at (0,1.5) {};

    \draw[connection] (1) -- (2);
    \draw[connection] (3) -- (2);
    \draw[connection] (4) -- (2);

\end{tikzpicture}}
\end{center}
We consider the initial dominant generic regular monomial : $$m=Y_2(z).$$
\begin{enumerate}[align=left]
    \item[Step 1 :] The term $Y_2(z)$ is admissible in $m$. We get the monomial :
        $$m_1=Y_1(zq^{-1}t)Y_2(zq^{-2}t^2)^{-1}Y_3(zq^{-1}t)Y_4(zq^{-1}t).$$
    \item[Step 2 :] The term $Y_1(zq^{-1}t)$ is admissible in $m_1$. We get the monomial :
        $$m_2=Y_1(zq^{-3}t^3)^{-1}Y_3(zq^{-1}t)Y_4(zq^{-1}t).$$
    \item[Step 3 :] The term $Y_3(zq^{-1}t)$ is admissible in $m_2$. We get the monomial :
        $$m_3=Y_1(zq^{-3}t^3)^{-1}Y_2(zq^{-2}t^2)Y_3(zq^{-3}t^3)^{-1}Y_4(zq^{-1}t).$$
    \item[Step 4.a :] The term $Y_2(zq^{-2}t^2)$ is admissible in $m_3$. We get the monomial :
        $$m_4=Y_2(zq^{-4}t^4)^{-1}Y_4(zq^{-1}t)Y_4(zq^{-3}t^3).$$
        $m_4$ is generic but not regular as it contains $Y_4(zq^{-1}t)Y_4(zq^{-3}t^3)$.
    \item[Step 4.b :] The term $Y_4(zq^{-1}t)$ is admissible in $m_3$. We get the monomial :
        $$m_5=Y_1(zq^{-3}t^3)^{-1}Y_2(zq^{-2}t^2)^{2}Y_3(zq^{-3}t^3)^{-1}Y_4(zq^{-3}t^3)^{-1}.$$
        $m_5$ is regular but not generic as it contains $Y_2(zq^{-2}t^2)^{2}$.
\end{enumerate}
\end{rem}

\subsection{Graph representation of the algorithm}

We can represent the algorithm as a graph where the vertices are the monomials appearing in the algorithm and the edges represent the transformations of the algorithm. 
This definition is the analogue of the graph defined by Frenkel, Jimbo and Mukhin for $qq$-characters in \cite{fjm1} which is inspired by the one introduced by Frenkel and Reshetikhin in \cite{frenkel2008qcharactersrepresentationsquantumaffine}. 
However, in this framework, for clarity we omit the coefficients in the graph. But they exist and we can explicitly compute them step by step using the formula \ref{coefalg}.
\begin{defi}
    Let $m$ be a generic dominant regular monomial. We assume the algorithm starting from the monomial $m$ never fails. 
    We define the oriented colored graph $G(m)$ associated to the algorithm starting from $m$ as follows :
    \begin{itemize}
        \item The set of vertices of $G(m)$ is the set of monomials $\mathbf{A}(m)$ appearing in the algorithm starting from $m$.
        \item There is an oriented edge of color $A_i(zaq^{-r_i}t)^{-1}$ from a vertex $m_1$ to a vertex $m_2$ if and only if $Y_i(za)$ is admissible in $m_1$ and $m_2~=~m_1A_i(zaq^{-r_i}t)^{-1}$.
    \end{itemize}
\end{defi}

\begin{rem}
    We will draw all the paths such that the edges are directed from up to down so that the upper monomial in the graph $G(m)$ is $m$. 
\end{rem}

\begin{example}\label{examp1}
    \begin{itemize}
        \item For $\g=\sld_2$, for $k\in\Z\backslash\{0,2\}$, and $m=Y_1(z)Y_1(zq^{-2}t^k)$ we obtain the following graph :
{
    \begin{center}   \scalebox{0.7}{   
\begin{tikzpicture}[
    >=Stealth,
    every node/.style={align=center, text width=4cm},
    every edge/.style={->, thick}
]
\node (N1) at (0, 0) {\shortstack{$Y_{1}(zq^{-2}t^{k}) Y_{1}(z)$}};
\node (N2) at (-3, -3) { {$Y_{1}(zq^{-2}t^{k}) Y_{1}(zq^{-2}t^{2})^{-1}$}};
\node (N3) at (3, -3) {\shortstack{$Y_{1}(zq^{-4}t^{k+2})^{-1} Y_{1}(z)$}};
\node (N4) at (0, -6) { \shortstack{$Y_{1}(zq^{-4}t^{k+2})^{-1} Y_{1}(zq^{-2}t^{2})^{-1}$}};
\draw[->] (N1) -- (N2) node[midway, left] {$ A_1(zq^{-1}t)^{-1} $};
\draw[->] (N1) -- (N3) node[midway, right] {$ A_1(zq^{-3}t^{k+1})^{-1} $};
\draw[->] (N2) -- (N4) node[midway, left] {$ A_1(zq^{-3}t^{k+1})^{-1} $};
\draw[->] (N3) -- (N4) node[midway, right] {$ A_1(zq^{-1}t)^{-1} $};
\end{tikzpicture}}
\end{center}}
\item  For $\g=\sld_4$ and $m=Y_1(z)$ :\\
\begin{center}\scalebox{0.7}{
\begin{tikzpicture}[
    >=Stealth,
    every node/.style={align=center, text width=4cm},
    every edge/.style={-{Stealth[scale=1.2]}, thick}
]
\node (N1) at (0, 0) { \shortstack{$Y_{1}(z)$}};
\node (N2) at (0, -2) { \shortstack{$Y_{1}(zq^{-2}t^{2})^{-1}Y_{2}(zq^{-1}t^{1})$}};
\node (N3) at (0, -4) { \shortstack{$Y_{2}(zq^{-3}t^{3})^{-1} Y_{3}(zq^{-2}t^{2})$}};
\node (N4) at (0, -6) { \shortstack{$Y_{3}(zq^{-4}t^{4})^{-1} Y_{4}(zq^{-3}t^{3})$}};
\node (N5) at (0, -8) { \shortstack{$Y_{4}(zq^{-5}t^{5})^{-1}$}};
\draw[->] (N1) -- (N2) node[midway, right, yshift=0.2cm] {$ A_{1}(zq^{-1}t)^{-1} $};
\draw[->] (N2) -- (N3) node[midway, right, yshift=0.2cm] {$ A_{2}(zq^{-2}t^2)^{-1} $};
\draw[->] (N3) -- (N4) node[midway, right, yshift=0.2cm] {$ A_{3}(zq^{-3}t^3)^{-1} $};
\draw[->] (N4) -- (N5) node[midway, right, yshift=0.2cm] {$ A_{4}(zq^{-4}t^4)^{-1} $};
\end{tikzpicture}}
\end{center}
    \end{itemize}
\end{example}

\subsection{Well-definedness of the algorithm } 
It is not obvious that the algorithm described above is well-defined as the coefficient $\lambda_{i,k}$ defined in \eqref{coefalg} 
seems to depend on the path taken to create the monomial $m'_{i,k}$. 
For example, in the first example in Example \ref{examp1}, the monomial $Y_{1}(zq^{-4}t^{k+2})^{-1} \allowbreak Y_{1}(zq^{-2}t^{2})^{-1}$ can be obtained in two different ways. We need to prove that the coefficients obtained are the same. \\
To prove this, we proceed in two steps. 
Firstly, we prove in Corollary \ref{cycle} that if a monomial is reachable via two distinct transformations in directions $i$ and $j$ starting from $M$ and $N$ respectively, then $M$ and $N$ share a common ancestor. 
This ancestor is guaranteed to exist within the subgraph restricted to the arrows coloured by $A_u(zc)^{-1}$ for $c\in\C^* q^\Z t^\Z$ and $u\in\{i,j\}$. 
In order to prove this, we have to prove the technical Lemma \ref{algorithmreverse} which formulates a sufficient (and necessary) condition for a monomial to come from a given transformation. 
Finally, we prove that the coefficient obtained by traversing the cycle along the right-hand path is identical to the one obtained along the left-hand path. This proof is a basic computation in a rank $2$ Lie algebra.

After that, we need to prove that if the algorithm does not fail and ends in finitely many steps then the element constructed lies in $\WWqt$.\\

Let us begin with some definitions and the technical lemma.

\begin{defi} 
    \begin{enumerate}
        \item We say that the algorithm \textit{works} if it never fails and ends in finitely many steps.
        \item Let $m$ be a generic dominant regular monomial such that the algorithm starting from $m$ works. 
        For all $X\in \mathbf{A}(m)$, there exists $h\geq0$, $i_1,\ldots i_h \in I$ and $a_1,\ldots a_h \in \C^* q^\Z t^\Z$ such that $X = mA_{i_1}(za_1)^{-1}\cdots A_{i_h}(za_h)^{-1}$. We define the \textit{height} of $X$ to be $h$.
        It is well-defined since the variables $A_i(za)$ are algebraically independent in $\Hqt(\g)$ with respect to the normal ordered product.
    \end{enumerate}
\end{defi}

\begin{lem}\label{algorithmreverse}
    Let $m\in\mathbf{M}$ be a dominant regular generic monomial. We assume the algorithm starting from $m$ works. For each monomial $X\in\mathbf{A}(m)$ appearing in the algorithm, for all $i\in I$, for all $a\in \C^* q^\Z t^\Z$, if $d_{i,a}(X)=-1$ and $d_{i,aq^{2r_i}}(X),d_{i,at^{-2}}(X)\ne -1$ then there exists a monomial $X'$ also appearing in the algorithm such that $X'A_{i}(zaq^{r_i}t^{-1})^{-1}=X$.
\end{lem}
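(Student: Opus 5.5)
We argue by induction on the height of $X$ in $\mathbf{A}(m)$. If $X=m$, the hypothesis $d_{i,a}(X)=-1$ cannot hold because $m$ is dominant, so there is nothing to prove. Now let $X$ have height $h\ge1$. By definition of the algorithm there is a parent $Z\in\mathbf{A}(m)$ of height $h-1$ and a transformation admissible in $Z$, namely $X=ZA_j(zbq^{-r_j}t)^{-1}$ with $Y_j(zb)$ admissible in $Z$. Suppose $d_{i,a}(X)=-1$ and $d_{i,aq^{2r_i}}(X),d_{i,at^{-2}}(X)\neq-1$. We distinguish two cases according to whether the last step created the factor $Y_i(za)^{-1}$.

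\textbf{Case 1: the last step created $Y_i(za)^{-1}$.} Recall that
\[
A_j(zbq^{-r_j}t)^{-1}=Y_j(zb)^{-1}Y_j(zbq^{-2r_j}t^2)^{-1}\prod_{k\sim j}Y_k(\cdots)^{\pm},
\]
where the neighbouring factors come with positive powers, and genericity of $Z$ and $X$ forces each degree to change by exactly one. If $d_{i,a}(Z)\ge0$, the only way the degree can drop to $-1$ is that $i=j$ and $a=bq^{-2r_j}t^2$; the factor $Y_j(zb)^{-1}$ only cancels the admissible $Y_j(zb)$. In that situation $aq^{2r_i}t^{-2}=b$, so $X'=Z$ satisfies $X'A_i(zaq^{r_i}t^{-1})^{-1}=X$ and we are done.

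\textbf{Case 2: $d_{i,a}(Z)=-1$ already.} We want to apply the induction hypothesis to $Z$, so we must check that $d_{i,aq^{2r_i}}(Z),d_{i,at^{-2}}(Z)\neq-1$. Going from $Z$ to $X$ can only raise the degrees at $(i,aq^{2r_i})$ and $(i,at^{-2})$ through a neighbouring factor (there $k\neq j$) or lower them through the $j$-factors. If the degree at one of these points were $-1$ in $Z$, the step would have had to raise it. That would require $i\sim j$ and $aq^{2r_i}$ or $at^{-2}$ to equal one of the specific spectral parameters $bq^{-r_j}t^{\pm}\cdots$ of $A_j$. In that configuration, combining the regularity of $Z$ and $X$ with the admissibility of $Y_j(zb)$ in $Z$ should yield a contradiction, or at least a non-regular monomial, which the assumption that the algorithm works forbids. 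Granting the hypothesis for $Z$, induction gives $Z'$ with $Z=Z'A_i(zaq^{r_i}t^{-1})^{-1}$.

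It remains to commute the two steps. We set $X'':=Z'A_j(zbq^{-r_j}t)^{-1}$ and $X':=X''$, so that $X'A_i(zaq^{r_i}t^{-1})^{-1}=X$. We must then show that $X''$ appears in the algorithm, namely that $Y_j(zb)$ is admissible in $Z'$. In $Z'$ the degree at $(i,aq^{2r_i}t^{-2})$ is $+1$ (admissible) and at $(i,a)$ it is $0$. Passing from $Z'$ to $Z$ changes degrees of $Y_j$ only at points adjacent to $i$ or at the $i$-points themselves. So $d_{j,b}(Z')>0$ and $d_{j,bq^{-2r_j}}(Z')=d_{j,bt^2}(Z')=0$ follow from the same data in $Z$ unless one of these points coincides with a point touched by $A_i(zaq^{r_i}t^{-1})$. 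We would check these finitely many coincidences using the regularity and genericity of $Z$, $Z'$ and $X$. For $i=j$, the relevant coincidences are exactly the $q$-, $t$- and $qt$-shifts excluded by regularity. For $i\sim j$, they are the shifts by $q^{\pm r}t^{\pm1}$ coming from the neighbours.

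This bookkeeping of coincidences, both in Case 2 (verifying the hypothesis for $Z$) and in the commutation step, is the main obstacle. We plan to organise it as a rank-$2$ computation in the pair $\{i,j\}$, mirroring the later argument of Corollary~\ref{cycle}. We expect the conditions $d_{i,aq^{2r_i}},d_{i,at^{-2}}\neq-1$ in the hypothesis to be exactly what rules out the bad coincidences when $i=j$.
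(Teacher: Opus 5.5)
\textbf{There is a genuine gap in Case~2.} Your Case~1 and the commutation at the end of Case~2 match the paper's trivial case and its case (b). The step you leave as ``should yield a contradiction'' is false, and it is where the real difficulty of the lemma lies.

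When $i\sim j$, the parent $Z$ can satisfy $d_{i,a}(Z)=-1$ together with $d_{i,aq^{2r_i}}(Z)=-1$ (or $d_{i,at^{-2}}(Z)=-1$), with the $j$-step removing that second factor, while every monomial stays generic and regular. For instance, in type $A_2$ let $m=Y_2(zq^3t^{-3})Y_1(zq^4t^{-2})$. The algorithm works from $m$: it produces eight monomials, all generic and regular. Among them is the path
\[
m\to Y_1(zq^4t^{-2})Y_1(zq^2t^{-2})Y_2(zqt^{-1})^{-1}\to Y_1(zq^4t^{-2})Y_1(z)^{-1}\to Z\to X,
\]
where $Z=Y_1(z)^{-1}Y_1(zq^2)^{-1}Y_2(zq^3t^{-1})$ and $X=ZA_2(zq^2)^{-1}=Y_1(z)^{-1}Y_2(zqt)^{-1}$. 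The last step comes from the admissible $Y_2(zq^3t^{-1})$. Take $(i,a)=(1,1)$:
\begin{itemize}
\item $X$ satisfies the hypotheses of the lemma, and $d_{1,1}(Z)=-1$, so you are in your Case~2.
\item But $d_{1,q^2}(Z)=-1$, so the induction hypothesis does not apply to $Z$ at $(1,1)$.
\item Moreover $ZA_1(zqt^{-1})=Y_1(zq^2t^{-2})Y_1(zq^2)^{-1}Y_2(zq^3t^{-1})Y_2(zqt^{-1})^{-1}$ is not regular. So $Z$ has no parent along $A_1(zqt^{-1})^{-1}$, and there is nothing to commute.
\end{itemize}
The monomial the lemma asks for is $X'=Y_1(zq^2t^{-2})Y_2(zqt^{-1})^{-1}Y_2(zqt)^{-1}$. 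It is reached only through the other branch $m\to Y_2(zq^3t^{-3})Y_2(zq^3t^{-1})Y_1(zq^2)^{-1}\to Y_2(zq^3t^{-3})Y_2(zqt)^{-1}\to X'$. That branch shares no vertex with the path above except $m$ and $X$.

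\textbf{What is needed instead.} No local two-step argument can work, because the correct $X'$ is linked to the chosen parent only through an ancestor several levels up. The paper handles this configuration in its case (a), written out for $A_2$:
\begin{itemize}
\item First, use regularity and the hypothesis $d_{i,at^{-2}}(X)\neq-1$ to show that no factor $Y_i(zaq^{2u}t^{-2})^{-1}$ occurs in $Z$ along the maximal string $Y_i(za)^{-1}\cdots Y_i(zaq^{2k})^{-1}$. The induction hypothesis then applies successively at $(i,aq^{2k}),\dots,(i,a)$ and climbs to an ancestor $X_{k+1}$.
\item Next, show that $X_{k+1}$ contains a $t$-string $Y_j(zaqt^{-1})^{-1}\cdots Y_j(zaqt^{-2s-1})^{-1}$, and climb again in direction $j$ to an ancestor $X_{k+s+2}$.
\item Finally, from $X_{k+s+2}$ build a second descending path: first $A_i(zaq^3t^{-1})^{-1},\dots,A_i(zaq^{2k+1}t^{-1})^{-1}$, then $A_j(zaq^2)^{-1},\dots,A_j(zaq^2t^{-2s-2})^{-1}$. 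Check admissibility at each step against what the first path tells you, so that the new path ends with $A_i(zaqt^{-1})^{-1}$ into $X$.
\end{itemize}
In the example above, $k=1$, $s=0$ and $X_{k+s+2}=m$. The symmetric case $d_{i,at^{-2}}(Z)=-1$ needs the same construction with the roles of $q^{-r_i}$ and $t$ exchanged.

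\textbf{A smaller omission in your commutation step.} You should also verify that $Y_i(zaq^{2r_i}t^{-2})$ is admissible in $X''$, that is, that there is an actual arrow $X''\to X$ and not merely the identity $X''A_i(zaq^{r_i}t^{-1})^{-1}=X$. The paper does this using regularity of $X$, and the later path-independence argument relies on arrows.
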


\begin{rem}
    This lemma seems natural and seems not so hard to prove. 
    Indeed the proof is not hard but quite technical and long as I did not find a simpler way to do it.
\end{rem}

\begin{notation}
    Let $m=Y_{i_1}(za_1)^{\varepsilon_1}\ldots Y_{i_k}(za_k)^{\varepsilon_k}$. We will say that a monomial $X\in\mathbf{M}$ \textit{contains} $m$ (or that $m$ \textit{appears} in $X$) if for all $1\le j\le k$, $d_{i_j,a_j}(X)=\varepsilon_j$.
\end{notation}

\begin{proof}
    We prove it by induction on the height of the monomials.\\ 
        \textit{\underline{Height $0$ :}} The only monomial with height $0$ is the dominant generic monomial from which we start our algorithm. Hence the property at height $0$.\\

         \textit{\underline{Height $h+1$ :}} We assume the property is true for all the monomials with heights $h'\leq h\in\N$. 
        Let us prove it for the monomials with height $h+1$. Let $X$ be a monomial with height $h+1$. 
        Let $i\in I$, $a\in \C^* q^\Z t^\Z$ such that $d_{i,a}(X)=-1$ and $d_{i,aq^{2r_i}}(X),d_{i,at^{-2}}(X)\ne -1$. 
        The monomial appears in the algorithm and its height is strictly positive, so it has to come from a monomial $X'$ with height $h$. 
        Let $A_j(zbq^{r_j}t^{-1})^{-1}$ be the transformation involved. 
        It implies that $d_{j,b}(X)=-1$ and $d_{j,bq^{2r_j}t^{-2}}(X')=1$.
        If $(i,a)=(j,b)$ then we have the result. 
        We assume $(i,a)\neq (j,b)$.\\

        In all this proof we assume the Dynkin subdiagram with nodes $\{i,j\}$ is of type $A_1$, $A_1\times A_1$, or $A_2$. 
        In the other cases the proof has the same structure but the parameters change and the order of transformations differ.\\
        The reader can find a proof in type $B_2$ and $G_2$ in
        https://assakaf.pages.math.cnrs.fr/algodwalg.pdf. We have :
        $$X=X'A_{j}(zbq^{r_j}t^{-1})^{-1}.$$
        Hence,
        $$ \left\{\begin{array}{llll}
            d_{j,bq^{2r_j}t^{-2}}(X')=1\\
            d_{j,b}(X)=-1\\
            d_{j,c}(X)=d_{j,c}(X')\text{ for all $c\not\in\{b,bq^{2r_j}t^{-2}\}$}\\
            d_{k,c}(X)\geq d_{k,c}(X')\text{ for all $k\neq j$, $c\in K$}
        \end{array}\right.$$
        We know that $d_{i,a}(X)=-1$, then $(i,a)\neq (j,bq^{2r_j}t^{-2})$. Moreover, we know that $(i,a)\neq (j,b)$.  
        By the relations listed above, it implies $d_{i,a}(X')\leq-1$. The monomials are all generic, thus 
        $$ d_{i,a}(X')=-1.$$
        Then by the induction assumption, one of the two following assertions is true :\\
        \begin{itemize}
            \item[(a)]  $d_{i,aq^{2r_i}}(X')=-1$  {or}  $d_{i,at^{-2}}(X')= -1$.
            \item[(b)]  There exists a monomial $X''$ appearing in the algorithm such that
        $$X''A_{i}(zaq^{r_i}t^{-1})^{-1}=X'.$$
        \end{itemize}
        \textit{a)} Firstly we assume (a) is true and $X'$ does contain $Y_i(zaq^{2r_i})^{-1}$ (i.e $d_{i,aq^{2r_i}}(X')=-1$). 
        By assumption, $X$ does not. 
        It implies that the transformation $A_j(zbq^{r_j}t^{-1})^{-1}$ simplifies $Y_i(zaq^{2r_i})^{-1}$ and in particular $i\ne j$. 
        Therefore, the expressions depend on the type of the Dynkin diagram generated by the nodes $i$ and $j$. 
        It is clear that this implies that $C_{i,j}\neq0$, and the Lie subalgebra generated by the simple roots $\alpha_i$ and $\alpha_j$ 
        is of type $A_2$, $B_2$ or $G_2$.\\

        Let us do it in type $A_2$. \\
        We obtain $b=aqt$.         
        Let $k>0$ the maximal integer such that $Y_i(zaq^{2s })^{-1}$ appears in $X'$ for all $0\le s\le k$. 
        By definition, $Y_i(zaq^{2(k+1)})^{-1}$ is not in $X'$. 
        Moreover, if $Y_i(zaq^{2k}t^{-2})^{-1}$ appears in $X'$, then $Y_i(zaq^{2(k-1) })^{-1}Y_i(zaq^{2 k}t^{-2})^{-1}$ appears in $X'$. 
        The monomial has to be regular, so $Y_i(zaq^{2k })^{-1}Y_i(zaq^{2( k-1)}t^{-2})^{-1}$ appears in $X'$. 
        Thus, $X'$ contains $Y_i(zaq^{2(k-2) })^{-1} \allowbreak Y_i(zaq^{2(k-1) }t^{-2})^{-1}$.
        We can iterate this reasoning until we get that $Y_i(zat^{-2})^{-1}$ appears in $X'$. 
        However it does not appear in $X$ and has to be simplified by the transformation $A_j(zbq^{r_j}t^{-1})^{-1}=A_j(zaq^2)^{-1}$. 
        It is absurd by definition of the fields $(A_j(zc))_{c\in\C^* q^\Z t^\Z}$. 
        Hence, we can apply the induction hypothesis and we obtain that there exists $X_1$ given by the algorithm such that $$X_1A_{i}(zaq^{2k+1}t^{-1})^{-1}=X'.$$ 
        We can define recursively 
        $X_s$, $2\le s\le k+1$ such that $$X_s A_{i}(zaq^{2k-2s+3}t^{-1})^{-1}=X_{s-1}.$$ 
        Indeed, for all $1\leq s\leq k+1$, $Y_i(zaq^{2u })^{-1}$ appears in $X_s$ for all $0\le u\le k+1-s$ and by the same argument as before, $Y_i(zaq^{2(u-1) }t^{-2})^{-1}$ does not appear in $X_s$ nor $Y_i(zaq^{2(k+2-s) })^{-1}$ so that we can apply the induction hypothesis. 
        In particular, we have constructed $X_{k+1}$ such that $$X_{k+1}A_{i}(zaq^{ }t^{-1})^{-1}=X_k.$$
        Moreover, 
        \begin{equation}\label{eqn1}
            X_{k+1}A_{i}(zaq^{}t^{-1})^{-1}A_{i}(zaq^{3}t^{-1})^{-1}\cdots  A_{i}(zaq^{2k+1}t^{-1})^{-1}=X'.
        \end{equation}
        We recall that $Y_j(zbq^2t^{-2})=Y_{j}(zaq^{3}t^{-1})$ is admissible in $X'$ and $X'A_{j}(zaq^{2})^{-1}=X$. By definition of the algorithm, it implies 
        that $d_{j,aq^3t^{-1}}(X')=1$ and $d_{j,aqt^{-1}}(X')=0$. But the equation $\eqref{eqn1}$ gives
        $$d_{j,aqt^{-1}}(X')=d_{j,aqt^{-1}}(X_{k+1})+1\quad \text{and } \quad d_{j,aq^3t^{-1}}(X')=d_{j,aq^3t^{-1}}(X_{k+1})+1.$$
        Hence, $d_{j,aqt^{-1}}(X_{k+1})=-1$ and $d_{j,aq^3t^{-1}}(X_{k+1})=0$. Thus, $X_{k+1}$ contains $Y_{j}(zaqt^{-1})^{-1}$.
        Let $s\ge0$ be the maximal integer such that $X_{k+1}$ contains the monomial : $$Y_{j}(zaqt^{-1})^{-1}\dots Y_{j}(zaqt^{-2s-1})^{-1}.$$ 
        If for $1\le u\le s$, $X_{k+1}$ contains $Y_{j}(zaq^3t^{-2u-1})^{-1}$, we know that it contains the monomial $Y_{j}(zaqt^{-2u+1})^{-1}$. 
        However, $X_{k+1}$ has to be regular, so $X_{k+1}$ also contains $Y_{j}(zaq^3t^{-2u+1})^{-1}$. 
        Iterating the same argument we would get that $X_{k+1}$ does contain $Y_{j}(zaq^3t^{-1})^{-1}$, which is absurd. 
        Hence, we can use $s+1$ times the recurrence hypothesis until a monomial $X_{k+s+2}$. We get : 
        $$\forall 0\leq u \leq s, \quad X_{k+u+2}A_j(zaq^2t^{-2(s-u)-2})^{-1}=X_{k+u+1}$$
        At this step, we have constructed a path from $X_{k+ s +2}$ to $X$ with:
        \begin{equation}\label{eqn2}
            \begin{split}
            X_{k+s+2}A_j(zaq^2t^{-2})^{-1}\cdots A_j(zaq^2t^{-2s-2})^{-1} \times\qquad\qquad\qquad \\[5mm]
            \qquad A_i(zaqt^{-1})^{-1}\cdots A_i(zaq^{2k+1}t^{-1})^{-1}A_j(zaq^2)^{-1} = X.
            \end{split}
        \end{equation}
        We get the following path in the graph representation of the algorithm:
            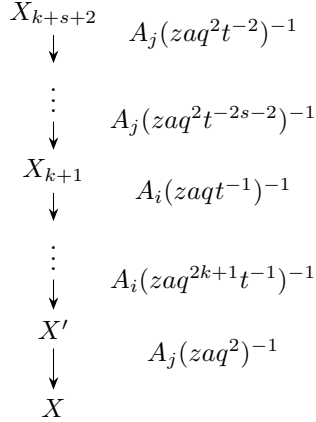
\begin{figure}[htbp]
                \begin{center}
                \begin{tikzpicture}[scale=0.7,
                    >=Stealth,
                    every node/.style={align=center, text width=4cm},
                    every edge/.style={-{Stealth[scale=1.2]}, thick}
                ]
                \node (N1) at (0, 0) { \shortstack{$X_{k+s+2}$}};
                \node (N2) at (0, -1.5) { \shortstack{$\vdots$}};
                \node (N3) at (0, -3) { $X_{k+1}$};
                \node (N5) at (0, -4.5) { \shortstack{$\vdots$}};
                \node (N6) at (0, -6) { \shortstack{$X'$}};
                \node (N7) at (0, -7.5) { \shortstack{$X$}};
                \draw[->] (N1) -- (N2) node[midway, right, yshift=0.2cm] {$ A_{j}(zaq^{2}t^{-2})^{-1}$};
                \draw[->] (N5) -- (N6) node[midway, right, yshift=0.2cm] {$ A_{i}(zaq^{2k+1}t^{-1})^{-1}$};
                \draw[->] (N6) -- (N7) node[midway, right, yshift=0.2cm] {$ A_j(zaq^2)^{-1} $};
                \draw[->] (N2) -- (N3) node[midway, right, yshift=0.2cm] {$ A_{j}(zaq^{2}t^{-2s-2})^{-1} $};
                \draw[->] (N3) -- (N5) node[midway, right, yshift=0.2cm] {$ A_{i}(zaq^{}t^{-1})^{-1} $};
                \end{tikzpicture}
                \end{center} 
                \caption{One path from $X_{k+s+2}$ to $X$}\label{gr1}
            \end{figure}\\
        The aim is now to construct another path from $X_{k+s+2}$ to $X$ ending with the transformation $A_i(zaqt^{-1})^{-1}$ in order to conclude.\\
        Finally, 
        \begin{equation}\label{assumption1}
            \text{$X_{k+s+2}$ contains $Y_i(zaq^4t^{-2})\dots Y_i(zaq^{2k+2}t^{-2})Y_j(zaq^3t^{-3})\dots Y_j(zaq^3t^{-2s-3})$.}
        \end{equation} 
        To apply the transformation $A_i(zaq^3t^{-1})^{-1}$, we need to check that $Y_i(zaq^{4}t^{-2})$ is admissible in $X_{k+s+2}$.\\
  
        We know that $X_{k+s+2}$ does not contain $Y_i(zaq^{2}t^{-2})$ as it would imply that $X_{k+s+1}$ contains $Y_i(zaq^{-2}t^2)^2$ which is absurd as all monomials are generic by assumption.
        Moreover, if $X_{k+s+2}$ contains $Y_i(zaq^{4})$ then the equation \eqref{eqn2} implies :
        $$d_{i,aq^4}(X_{k+s+2})=d_{i,aq^4}(X')=1.$$
        However we assumed (a), so $d_{i,aq^{2}}(X')=-1$. Hence, $X'$ contains $Y_i(zc)Y_i(zcq^{-2})^{-1}$ with $c=aq^{4}$ and is not regular. It is absurd.
        Thus, $Y_i(zaq^{4}t^{-2})$ is admissible in $X_{k+s+2}$ and
        the algorithm apply the transformation $A_i(zaq^3t^{-1})^{-1}$ to $X_{k+s+2}$, giving a monomial $Z_{k+s+1}$ such that :
        \begin{equation}\label{eqn4}
            X_{k+s+2}A_i(zaq^3t^{-1})^{-1}=Z_{k+s+1}.
        \end{equation}
        We assume there exists $1\le m\le k-1$ such that the algorithm gives $Z_{u+s+1}$ for all $m< u\le k$ such that :
        $$\forall m< u\le k, \quad Z_{u+s+2}A_i(zaq^{2(k-u+1)+1}t^{-1})^{-1}=Z_{u+s+1},$$
        setting $Z_{k+s+2}=X_{k+s+2}$.
        We want to prove that the algorithm gives $Z_{m+s+1}$ such that :
        $$Z_{m+s+2}A_i(zaq^{2(k-m+1)+1}t^{-1})^{-1}=Z_{m+s+1}.$$
        We know that
        \begin{equation}\label{eqn3}
            Z_{m+s+2}=X_{k+s+2}A_i(zaq^{3}t^{-1})^{-1}\cdots A_i(zaq^{2(k-m)+1}t^{-1})^{-1}.
        \end{equation}
        Then, $Z_{m+s+2}$ contains $Y_i(zaq^{2(k-m+2)}t^{-2})\ldots Y_i(zaq^{2k+2}t^{-2})$.\\
        Let us check that $Y_i(zaq^{2(k-m+2)}t^{-2})$ is admissible in $Z_{m+s+2}$.\\
        It is clear that $Z_{m+s+2}$ does not contain $Y_i(zaq^{2(k-m+1)}t^{-2})$ as it has been simplified by the transformation $A_i(zaq^{2(k-m)+1}t^{-1})^{-1}$.
        Moreover, if $Z_{m+s+2}$ contains $Y_i(zaq^{2(k-m+2)})$ then by \eqref{eqn3}, $X_{k+s+2}$ contains $Y_i(zaq^{2(k-m+2)})$.
        Thus, according to the Figure \ref{gr1}, $X_m$ contains $Y_i(zaq^{2(k-m+2)})$. 
        However, we read in the same figure that $Y_i(zaq^{2(k-m+2)}t^{-2})$ is admissible in $X_m$. It is absurd.
        Hence, $Z_{m+s+2}$ does not contain $Y_i(zaq^{2(k-m+2)})$. Thus, $Y_i(zaq^{2(k-m+2)}t^{-2})$ is admissible in $Z_{m+s+2}$ and the algorithm gives a monomial $Z_{m+s+1}$ such that :
        $$Z_{m+s+2}A_i(zaq^{2(k-m+1)+1}t^{-1})^{-1}=Z_{m+s+1}.$$
        By induction, we get monomials $Z_{s+2},\ldots,Z_{k+s+1}$ verifiying the equation \eqref{eqn3} and leading to the following subgraph :\\
        \begin{center}
\scalebox{0.7}{\begin{tikzpicture}[
    >=Stealth,
    every node/.style={align=center, text width=4cm},
    every edge/.style={-{Stealth[scale=1.2]}, thick}
]
\node (N1) at (0, 0) { \shortstack{$X_{k+s+2}$}};
\node (N2) at (0, -1.5) { \shortstack{$\vdots$}};
\node (N3) at (0, -3) { $X_{k+1}$};
\node (N4) at (-5, -1.5) { \shortstack{$\vdots$}};
\node (N5) at (0, -5.5) { \shortstack{$\vdots$}};
\node (N6) at (0, -7) { \shortstack{$X'$}};
\node (N7) at (0, -8.5) { \shortstack{$X$}};
\node (N9) at (-5, -3) { \shortstack{$Z_{s+2}$}};
\draw[->] (N1) -- (N2) node[midway, right, yshift=0.2cm] {$ A_{j}(zaq^{2}t^{-2})^{-1}$};
\draw[->] (N5) -- (N6) node[midway, right, yshift=0.2cm] {$ A_{i}(zaq^{2k+1}t^{-1})^{-1}$};
\draw[->] (N6) -- (N7) node[midway, right, yshift=0.2cm] {$ A_j(zaq^2)^{-1} $};
\draw[->] (N1) -- (N4) node[midway, left, yshift=0.2cm] {$ A_{i}(zaq^{3}t^{-1})^{-1} $};
\draw[->] (N4) -- (N9) node[midway, left, yshift=0.2cm] {$ A_{i}(zaq^{2k+1}t^{-1})^{-1} $};
\draw[->] (N2) -- (N3) node[midway, right, yshift=0.2cm] {$ A_{j}(zaq^{2}t^{-2s-2})^{-1} $};
\draw[->] (N3) -- (N5) node[midway, right, yshift=0.2cm] {$ A_{i}(zaq^{}t^{-1})^{-1} $};
\end{tikzpicture}}
\end{center} 

        Furthermore, by \eqref{eqn4}, we get :
        $$ d_{j,aq^3t^{-1}}(Z_{s+2})=d_{j,aq^3t^{-1}}(Z_{k+s+1})=d_{j,aq^3t^{-1}}(X_{k+s+2})+1.$$
        But we know by admissibility that we have :
        $$d_{j,aq^3t^{-3}}(X_{k+s+2})=1,\quad d_{j,aq^3t^{-1}}(Z_{s+2})=1,\quad \text{and}\quad d_{j,aq^3t^{-1}}(X_{k+s+2})=0.$$  
        Thus, the monomial $Z_{s+2}$ contains $Y_j(zaq^3t^{-1})\ldots Y_j(zaq^3t^{-2s-3})$.
        We want to check that the term $Y_j(zaq^3t^{-1})$ is admissible in $Z_{s+2}$.\\
        If $Z_{s+2}$ contains $Y_j(zaqt^{-1})$ then it contains $Y_j(zaq^3t^{-3})Y_j(zaqt^{-1})$.
        By regularity, it also contains $Y_j(zaqt^{-3})$.
        By \eqref{eqn3}, $X_{k+s+2}$ contains $Y_j(zaqt^{-3})$, and $Y_j(zaq^3t^{-3})$ is not admissible in $X_{k+s+2}$. It is absurd.\\
        If $Z_{s+2}$ contains $Y_j(zaq^3t)$ then by \eqref{eqn3}, $X_{k+s+2}$ contains $Y_j(zaq^3t)$. , 
        According to Figure \ref{gr1}, this implies that $X'$ contains $Y_j(zaq^3t)$. But $Y_j(zaq^3t^{-1})$ is admissible in $X'$. It is absurd.\\
        Hence, $Y_j(zaq^3t^{-1})$ is admissible in $Z_{s+2}$ and the algorithm gives a monomial $Z_{s+1}$ such that :
        \begin{equation}\label{eqn5}
            Z_{s+2}A_j(zaq^2)^{-1}=Z_{s+1}.
        \end{equation}
        We assume there exists $0\le m\le s+1$ such that we constructed $Z_{u}$ for all $m< u\le s+1$ such that :
        $$\forall m< u\le s+1, \quad Z_{u+1}A_j(zaq^2t^{-2(s+1-u)})^{-1}=Z_{u}.$$
        We want to prove that $Y_j(zaq^3t^{-2(s-m+1)-1})$ is admissible in $Z_{m+1}$ so that the algorithm gives the right monomial $Z_m$.\\
        It is clear that $Z_{m+1}$ does not contain $Y_j(zaq^3t^{-2(s-m-1)-1})$ as it has been simplified by the last transformation.\\
        Moreover, if $Z_{m+1}$ contains $Y_j(zaqt^{-2(s-m+1)-1})$ then by construction, $X_{k+s+2}$ contains the term $Y_j(zaqt^{-2(s-m+1)-1})$.
        According to Figure \ref{gr1}, this implies that the monomial $X_{k+m+2}$ contains the term $Y_j(zaqt^{-2(s-m+1)-1})$. 
        However, we read in the same figure that $Y_j(zaq^3t^{-2(s-m+1)-1})$ is admissible in $X_{k+m+2}$. It is absurd.
        Hence, $Y_j(zaq^3t^{-2(s-m+1)-1})$ is admissible in $Z_{m+1}$ and the algorithm gives a monomial $Z_m$ such that :
        $$Z_{m+1}A_j(zaq^2t^{-2(s-m+1)})^{-1}=Z_{m}.$$
        Now, we have to prove that $Y_i(zaq^2t^{-2})$ is admissible in $Z_0$ to prove the result. According to the construcion of the $Z_k$ and to the Figure \ref{gr1}.\\

        We have rigorously the following equality :
        $$ Z_0=X_{k+1}A_i(zaq^3t^{-1})^{-1}\ldots A_i(zaq^{2k+1}t^{-1})^{-1}A_j(zaq^2)^{-1}.$$
        Moreover, $Y_i(zaq^2t^{-2})$ is admissible in $X_{k+1}$. Hence, 
        $$d_{i,aq^2t^{-2}}(X_{k+1})=1;\quad d_{i,aq^2}(X_{k+1})=0; \quad d_{i,at^{-2}}(X_{k+1})=0.$$
        Hence, we get
        $$d_{i,aq^2t^{-2}}(Z_0)=1;\quad d_{i,aq^2}(Z_0)=0; \quad d_{i,at^{-2}}(Z_0)=0.$$
        Thus, $Y_i(zaq^2t^{-2})$ is admissible in $Z_0$ and the algorithm gives the transformation :
        $$Z_0A_i(zaqt^{-1})^{-1}=X.$$
        Hence the result leading to the following graph :
               \begin{center}
\scalebox{0.7}{\begin{tikzpicture}[
    >=Stealth,
    every node/.style={align=center, text width=4cm},
    every edge/.style={-{Stealth[scale=1.2]}, thick}
]
\node (N1) at (0, 0) { \shortstack{$X_{k+s+2}$}};
\node (N2) at (0, -1.5) { \shortstack{$\vdots$}};
\node (N3) at (0, -3) { $X_{k+1}$};
\node (N4) at (-5, -1.5) { \shortstack{$\vdots$}};
\node (N5) at (0, -5.5) { \shortstack{$\vdots$}};
\node (N6) at (0, -7) { \shortstack{$X'$}};
\node (N7) at (0, -8.5) { \shortstack{$X$}};
\node (N8) at (-5, -7) { \shortstack{$Z_0$}};
\node (N9) at (-5, -3) { \shortstack{$Z_{s+2}$}};
\node (N10) at (-5, -5.5) { \shortstack{$\vdots$}};
\draw[->] (N1) -- (N2) node[midway, right, yshift=0.2cm] {$ A_{j}(zaq^{2}t^{-2})^{-1}$};
\draw[->] (N5) -- (N6) node[midway, right, yshift=0.2cm] {$ A_{i}(zaq^{2k+1}t^{-1})^{-1}$};
\draw[->] (N6) -- (N7) node[midway, right, yshift=0.2cm] {$ A_j(zaq^2)^{-1} $};
\draw[->] (N9) -- (N10) node[midway, left, yshift=0.2cm] {$ A_j(zaq^2)^{-1} $};
\draw[->] (N1) -- (N4) node[midway, left, yshift=0.2cm] {$ A_{i}(zaq^{3}t^{-1})^{-1} $};
\draw[->] (N4) -- (N9) node[midway, left, yshift=0.2cm] {$ A_{i}(zaq^{2k+1}t^{-1})^{-1} $};
\draw[->] (N8) -- (N7) node[midway, left, yshift=0.2cm] {$A_{i}(zaq^{}t^{-1})^{-1}$};
\draw[->] (N10) -- (N8) node[midway, left, yshift=0.2cm] {$ A_{j}(zaq^{2}t^{-2s-2})^{-1}$};
\draw[->] (N2) -- (N3) node[midway, right, yshift=0.2cm] {$ A_{j}(zaq^{2}t^{-2s-2})^{-1} $};
\draw[->] (N3) -- (N5) node[midway, right, yshift=0.2cm] {$ A_{i}(zaq^{}t^{-1})^{-1} $};
\end{tikzpicture}}
\end{center}   
In the case $d_{i,at^{-2}}(X')=-1$, we can do a similar reasoning by exchanging the roles of $q^{-1}$ and $t$ in order to get the same result.\\
With a similar reasoning we do it in type $B_2$ and $G_2$ for $(i,j)$ equals $(1,2)$ or $(2,1)$, for $d_{i,aq^{2r_i}}(X')=-1$ and $d_{i,at^{-2}}(X')=-1$ in https://assakaf.pages.math.cnrs.fr/algodwalg.pdf.\\

        \textit{b)} We assume there exists a monomial $X''$ also appearing in the algorithm such that $$X''A_{i}(zaq^{r_i}t^{-1})^{-1}=X',$$
        then $X''$ does contain $Y_i(zaq^{2r_i}t^{-2})$.\\ 

        Here there are two cases to consider :
        \begin{itemize}
            \item We have $d_{j,bq^{2r_j}t^{-2}}(A_{i}(zaq^{r_i}t^{-1})^{-1})=1$ and the term $Y_j(zbq^{2r_j}t^{-2})$ comes from 
            the transformation $A_{i}(zaq^{r_i}t^{-1})^{-1}$.
            \item $X''$ contains $Y_j(zbq^{2r_j}t^{-2})$.
        \end{itemize}

        Firstly, we assume $d_{j,bq^{2r_j}t^{-2}}(A_{i}(zaq^{r_i}t^{-1})^{-1})=1$ and the term $Y_j(zbq^{2r_j}t^{-2})$ comes from 
        the transformation $A_{i}(zaq^{r_i}t^{-1})^{-1}$. Hence, $i\neq j$, and we assume the Dynkin subdiagram with nodes $\{i,j\}$ is of type $A_2$.
        Thus, $bq^2t^{-2}=aqt^{-1}$, and $bqt^{-1}=a$. Hence, $$d_{i,a}(A_j(zbqt^{-1}))=1.$$
        However, $d_{i,a}(X)=-1$ and $X'A_{j}(zaq^{}t^{-1})^{-1}=X$. Thus, $d_{i,a}(X')=-2$ which contradicts the genericity. It is absurd.\\
        
        Hence, we are in the second case and $X''$ contains $Y_j(zaq^{2r_j}t^{-2})$.

        Let us prove that $Y_j(zbq^{2r_j}t^{-2})$ is admissible in $X''$.\\
        If $i=j$ (so that $r_i=r_j$), then by contradiction we assume that $Y_j(zbq^{2r_j}t^{-2})$ is 
        not admissible in $X''$. We know that $Y_j(zbq^{2r_j}t^{-2})$ is admissible in $X'$ so that the presence of 
        $Y_i(zaq^{2r_i}t^{-2})$  in $X''$ prevents from doing the transformation $A_j(bq^{r_j}t^{-1})^{-1}$. 
        It implies that $a=bq^{-2r_i}$ or $a=bt^2$. 
        This implies $X$ contains $Y_i(zb)^{-1}=Y_i(zaq^{2r_i})^{-1}$ or $Y_i(zat^{-2})^{-1}$. 
        It is absurd by the initial assumption on $X$. 
        Thus, $Y_j(zbq^{2r_j}t^{-2})$ is admissible in $X''$.\\
        If $i\neq j$ then it is clear that the admissibility of $Y_j(zbq^{2r_j}t^{-2})$ in $X'$ implies its admissibility in $X''$. \\
        Finally, there exists a monomial $$X_{new}=X''A_{j}(zaq^{r_j}t^{-1})^{-1}$$ given by the algorithm and $X_{new}$ contains $Y_i(zaq^{2r_i}t^{-2})$.
        Moreover, if $A_{i}(zaq^{r_i}t^{-1})^{-1}$ is not admissible from $X_{new}$ then there exists a factor 
        $Y_i(zat^{-2})$ (resp.\ $Y_i(zaq^{2r_i})$) blocking this transformation. 
        But this would imply that this factor also appears in $X$ as we have the following equality of monomials :
        $$X_{new}=A_{i}(zaq^{r_i}t^{-1})^{-1}X.$$
        We remark that a priori, this equality is not sufficient to get an arrow from $X_{new}$ to $X$. This equality implies :
        \begin{align*}
            d_{i,aq^{2r_i}}(X_{new})&=d_{i,aq^{2r_i}}(X),
        \end{align*}
        and same for $at^{-2}$. 
        And hence $X$ does contain $Y_i(zc)$ and $Y_i(zct^2)^{-1}$ with $c=at^{-2}$ (resp.\ $Y_i(zcq^{-2r_i})^{-1}$ with $c=aq^{2r_i}$) 
        which contradicts the regularity of $X$. It is impossible. Hence the admissibility of $Y_i(zaq^{2r_i}t^{-2})$ in $X_{new}$.\\
        Finally we get the following paths :
        \begin{center}
\scalebox{0.7}{\begin{tikzpicture}[
    >=Stealth,
    every node/.style={align=center, text width=4cm},
    every edge/.style={-{Stealth[scale=1.2]}, thick}
]
\node (N1) at (0, 0) { \shortstack{$X''$}};
\node (N2) at (0, -3) { \shortstack{$X'$}};
\node (N3) at (0, -6) { \shortstack{$X$}};
\node (N4) at (-3, -3) { \shortstack{$X_{new}$}};
\draw[->] (N1) -- (N2) node[midway, right, yshift=0.2cm] {$ A_{i}(zaq^{r_i}t^{-1})^{-1}$};
\draw[->] (N2) -- (N3) node[midway, right, yshift=0.2cm] {$ A_j(zbq^{r_j}t^{-1})^{-1} $};
\draw[->] (N4) -- (N3) node[midway, left, yshift=0.2cm] {$ A_{i}(zaq^{r_i}t^{-1})^{-1} $};
\draw[->] (N1) -- (N4) node[midway, left, yshift=0.2cm] {$ A_j(zbq^{r_j}t^{-1})^{-1}$};
\end{tikzpicture}}
\end{center}
and hence the result.
\end{proof}

\begin{cor}\label{cycle}
        If a monomial $M$ produced by the algorithm comes for two different transformations : $$M=M_0A_i(za)^{-1}=N_0A_j(zb)^{-1},$$
    then there exists $k\geq0$ and $2k+1$ monomials $N$, $M_1,\ldots,M_k$ and $N_1,\ldots,N_k$ such that:
    $$N\rightarrow M_k\rightarrow \cdots \rightarrow M_1\rightarrow M_0\rightarrow M$$
    and 
    $$N\rightarrow N_k\rightarrow \cdots \rightarrow N_1\rightarrow N_0\rightarrow M$$
    are both paths produced by the algorithm. Moreover, all the transformations $A_u(zc)^{-1}$ involved satisfy $u\in\{i,j\}$.
\end{cor}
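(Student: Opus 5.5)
The corollary is essentially a repackaging of the proof of Lemma \ref{algorithmreverse}. The idea is to apply the lemma to $M$ itself, with one of the two transformations playing the role of the edge $X'\to X$ and the other playing the role of the variable $Y_i(za)^{-1}$ that we want to ``undo''. The diamonds and ladders constructed in that proof then give the two paths to a common ancestor.

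\textbf{Setup.} Write $M_0A_i(za)^{-1}=N_0A_j(zb)^{-1}$ and put $a'=aq^{-r_i}t$, so that $Y_i(za')$ is admissible in $M_0$ and $d_{i,a'q^{-2r_i}t^{2}}(M)=-1$. If $(i,a)=(j,b)$, then $M_0=N_0$ by algebraic independence of the $A_u(zc)$, and $k=0$ with $N=M_0$ works (reading the statement with $N=M_0=N_0$). So assume $(i,a)\neq(j,b)$.

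Take $X=M$ and $X'=N_0$, with the edge $X'\to X$ coloured $A_j(zb)^{-1}$. The variable $Y_i(zc)^{-1}$ with $c=a'q^{-2r_i}t^2$ was created by the edge $M_0\to M$. Because $Y_i(za')$ is admissible in $M_0$, the regularity and genericity of $M$ and $M_0$ give $d_{i,cq^{2r_i}}(M),d_{i,ct^{-2}}(M)\neq-1$. Checking these two inequalities is routine: the only contributions to $d_{i,\cdot}$ at those points come from $M_0$, where the degrees are $0$ by admissibility, and from $A_i(za)^{-1}$, which does not touch them. So the hypotheses of Lemma \ref{algorithmreverse} hold at $(i,c)$.

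\textbf{Main step.} I will not use only the conclusion of the lemma, but rerun its proof at $X=M$, $X'=N_0$, keeping track of the paths it builds. There are two cases.

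\emph{Case (b).} Here $N_0=X''A_i(za)^{-1}$, and the proof builds $X_{new}=X''A_j(zb)^{-1}$ with $X_{new}\to M$ coloured $A_i(za)^{-1}$. Since $X_{new}A_i(za)^{-1}=M=M_0A_i(za)^{-1}$, we get $X_{new}=M_0$ by algebraic independence. This gives the diamond $X''\to M_0\to M$ and $X''\to N_0\to M$, so $k=1$ and $N=X''$.

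\emph{Case (a).} The proof produces a common ancestor $X_{k+s+2}$ with two chains. The right chain consists of $j$-steps followed by $i$-steps and ends $\to N_0\to M$. The left chain is $i$-steps, then $j$-steps, then $Z_0\to M$ coloured $A_i(za)^{-1}$ (after the relabelling $a\mapsto c$). Again $Z_0=M_0$ by algebraic independence, since both give $M$ after multiplying by $A_i(za)^{-1}$. Every colour appearing in these chains is $A_u(\cdot)^{-1}$ with $u\in\{i,j\}$, which can be read off directly from the explicit formulas.

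\textbf{Equal lengths.} The two chains differ by the same multiset of $A$-factors, so by algebraic independence of the $A_u(zc)$ they have equal length. Equivalently, both paths have lengths equal to the height difference, which is well defined. This gives the indexing by a common $k$. If the lengths were padded differently, I would just relabel.

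\textbf{Main obstacle.} Case (a) in the non-simply-laced rank-2 subdiagrams $B_2$ and $G_2$, together with the $t^{-2}$ variant, is where the path structure has to be read off from the computations deferred to the external notes. I would state that the same bookkeeping applies verbatim there.
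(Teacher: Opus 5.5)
Your proposal is correct and follows the paper's route. The paper's proof simply says the corollary is contained in the proof of Lemma \ref{algorithmreverse}, and you make that explicit: you rerun the lemma's argument at $X=M$, $X'=N_0$ for the variable $Y_i^{-1}$ created by the edge $M_0\to M$, and you identify the constructed predecessor ($X_{new}$ or $Z_0$) with $M_0$ by cancellation. There are only cosmetic slips: the admissible variable in $M_0$ is $Y_i(zaq^{r_i}t^{-1})$, not $Y_i(zaq^{-r_i}t)$, and the diamond in case (b) corresponds to $k=0$ in the statement's indexing, not $k=1$.
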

\begin{proof}
    This is proved in the proof of Lemma \ref{algorithmreverse}. 
\end{proof}

\begin{theorem} \label{theoremalgowork}
\begin{itemize}
    \item[a)] Each coefficient $\eqref{coefalg}$ constructed in the algorithm is well-defined, non-zero, and independent of the path taken. Thus, the algorithm is well-defined.
    \item[b)] Given a dominant regular generic monomial $m$, if the algorithm works (i.e never fails and ends in finitely many steps) then it gives an element $T(m)\in \WWqt$ which has a unique dominant monomial $m$ with coefficient $1$.
\end{itemize}
\end{theorem}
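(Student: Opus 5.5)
For part a), non-vanishing and membership in $K^\times$ of each single-step factor in \eqref{coefalg} come from Proposition \ref{coeffwelldefprop}. The hypotheses of that proposition (that $ab_j^{-1},ac_u^{-1}\notin\{1,q^{2r_i},t^{-2},q^{2r_i}t^{-2}\}$) need to be checked from genericity and regularity of the current monomial and from admissibility of $Y_i(za)$:
\begin{itemize}
\item $ab_j^{-1}=1$ is excluded by genericity, and $ac_u^{-1}=1$ by reducedness.
\item $c_u=aq^{-2r_i}$ and $c_u=at^2$ are excluded by admissibility. So are $b_j$ equal to these values, since admissibility requires degree $0$ there.
\item $c_u=aq^{-2r_i}t^2$ is excluded by the third regularity condition.
\item If $b_j=aq^{-2r_i}t^2$, the second regularity condition would force $d_{i,aq^{-2r_i}}>0$, contradicting admissibility.
\end{itemize}
Path-independence is the main point. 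I would prove by induction on height that the coefficient attached to a monomial $M\in\mathbf{A}(m)$ does not depend on the path from $m$ to $M$. Given two incoming arrows $M=M_0A_i(za)^{-1}=N_0A_j(zb)^{-1}$, Corollary \ref{cycle} supplies a common ancestor $N$ and two paths from $N$ to $M$ that use only colours in $\{i,j\}$. By induction, the coefficient at $N$ is well defined, so it suffices to show that the products of the step factors \eqref{coefalg} along the two paths agree.

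Each step factor is a product over pairs (the expanded variable, another variable of the same colour) of an $\mathcal{S}$-type function. So the ratio of the two path products reduces to a rank-$2$ computation in the subdiagram $\{i,j\}$. Factors involving variables untouched by these transformations telescope identically along both paths. It remains to compare the contributions of the variables created or destroyed by the $A_i^{-1}$ and $A_j^{-1}$. For this I would use the shape of the two paths produced in the proof of Lemma \ref{algorithmreverse}: a $q$-string of $A_i^{-1}$ interleaved with a $t$-string of $A_j^{-1}$, or a simple square in case b). In the square case the check is an identity of two-variable rational functions, using the symmetry $\mathcal{S}(x)\mathcal{S}(x^{-1})$-type relations. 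For strings, I would reduce to iterated squares by commuting adjacent steps one at a time.

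Type $A_2$ (and $A_1\times A_1$, $A_1$) is done by hand; types $B_2,G_2$ follow the same pattern with $r_i$ inserted. I expect this bookkeeping to be the main obstacle, in particular showing that in the string case every intermediate square is actually realized by admissible arrows, so that the local identity may be applied.

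For part b), I would show $[T(m),S_k^\pm]=0$ for each $k$, using the commutator formula of Section \ref{computation}. For each monomial $M$ in $T(m)$, the commutator with $S_k^-$ is a sum of delta functions located at the $Y_k(zc)^{\pm1}$ with no $q^{2r_k}$-neighbour. The $\delta$ coming from a factor $Y_k(za)$ of $M$ must cancel against the $\delta$ coming from $Y_k(zaq^{-2r_k}t^2)^{-1}$ in $MA_k(zaq^{-r_k}t)^{-1}$, via the difference relation \eqref{diffrely}. This is exactly how \eqref{coeff} was derived.
\begin{itemize}
\item If $Y_k(za)$ is admissible in $M$, the partner monomial is created by the algorithm with the right coefficient.
\item If it is not admissible, regularity together with genericity shows that the delta term is absent or is paired inside a $q$- or $t$-string that already cancels.
\item Conversely, each $Y_k(zc)^{-1}$ giving a nonzero delta has a parent, by Lemma \ref{algorithmreverse}.
\end{itemize}
Termination gives finiteness of the sum. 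The same argument with \eqref{diffrelyplus} handles $S_k^+$. Uniqueness of the dominant monomial holds because every created monomial $M\neq m$ contains some $Y_k(zc)^{-1}$ introduced by its last arrow. Finally, the coefficient of $m$ is $1$ by Step $0$.
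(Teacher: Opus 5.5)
\paragraph{The gap is in part b).} You pair the $\delta$-term of $Y_k(za)$ in $M$ with that of $Y_k(zaq^{-2r_k}t^2)^{-1}$ in the one-step descendant $MA_k(zaq^{-r_k}t)^{-1}$. You then dismiss a non-admissible $Y_k(za)$ as giving a term that is ``absent or paired inside a string that already cancels''. This is false.

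The $\delta$-term at $a$ is present exactly when $d_{k,a}(M)=1$ and $d_{k,aq^{-2r_k}}(M)=0$. It is then a genuine term with nonzero coefficient, and nothing else in $M$ sits at the same point $w=zaq^{-r_k}$. If $Y_k(za)$ is not admissible, regularity forces $d_{k,at^2}(M)=1$. Then $MA_k(zaq^{-r_k}t)^{-1}$ contains $Y_k(zat^2)Y_k(zaq^{-2r_k}t^2)^{-1}$, so it is not regular and is never produced.

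A concrete case: take $\mathfrak{sl}_2$ and $m=Y(z)Y(zt^2)$.
Only $Y(zt^2)$ is admissible, and the algorithm returns $m+\lambda_1Y(z)Y(zq^{-2}t^4)^{-1}+\lambda_2Y(zq^{-2}t^2)^{-1}Y(zq^{-2}t^4)^{-1}$.
The $\delta$-term of $[m,S^-(w)]$ at $w=zq^{-1}$ can only be cancelled by the last monomial, two steps down. Your proposed partner $Y(zq^{-2}t^2)^{-1}Y(zt^2)$ does not occur.

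\paragraph{What is missing: pairing along maximal $t$-strings.} Let $a,at^2,\dots,at^{2s}$ be the maximal string with $d_{k,\cdot}(M)=1$.
\begin{itemize}
\item Regularity gives $d_{k,aq^{-2r_k}t^{2u}}(M)=0$ for $0\le u\le s$.
\item Hence the string can be expanded from the top by $s+1$ admissible steps. This reaches $N=MA_k(zaq^{-r_k}t^{2s+1})^{-1}\cdots A_k(zaq^{-r_k}t)^{-1}$, where $Y_k(zaq^{-2r_k}t^{2s+2})^{-1}$ carries the partner $\delta$-term.
\item Using $s+1$ difference relations \eqref{diffrely}, its residue is expressed through the same field $:M(z)S_k^-(zaq^{-r_k}):$ as the residue at $(M,a)$.
\item You must then check that the coefficient forced by cancellation equals the product of the $s+1$ algorithm factors along the string. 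This is a separate telescoping computation, not the one-step derivation of \eqref{coeff}.
\end{itemize}
Your converse step needs the same repair. Lemma \ref{algorithmreverse} does not apply at $b$ when $d_{k,bt^{-2}}(N)=-1$. You have to descend to the bottom of the negative $t$-string, apply the lemma there, and iterate $s+1$ times. This inverts the forward map and gives the bijection between the two families of $\delta$-terms that the argument needs. For $S_k^+$ the same works with $q$-strings in place of $t$-strings.

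\paragraph{On part a).} Your check of the hypotheses of Proposition \ref{coeffwelldefprop} is correct, and more explicit than the paper's.

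The obstacle you expect for path-independence, realizing intermediate squares, does not arise. Write the step factor as $\prod_{c\neq a}F(a/c)^{d_{i,c}(X)}$ with $F(x)=\frac{(1-q^{-2r_i}x)(1-t^2x)}{(1-x)(1-q^{-2r_i}t^2x)}$. The paper computes the product along an arbitrary path from the common ancestor and splits it into three kinds of factors:
\begin{itemize}
\item Same-colour pairs. If $Y_i(za_r)$ is expanded before $Y_i(za_u)$, the first expansion sees $Y_i(za_u)$ and contributes $F(a_r/a_u)$. The second sees $Y_i(za_rq^{-2r_i}t^2)^{-1}$ and contributes $F(q^{2r_i}t^{-2}a_u/a_r)^{-1}=F(a_r/a_u)^{-1}$, so these cancel.
\item Cross-colour pairs. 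These give factors symmetric in the two transformations, namely $K'_{u,r}=K'_{r,u}$ and its analogues in types $B_2$ and $G_2$.
\item Spectators. These give factors depending only on the transformation.
\end{itemize}
So the path product depends only on the multiset of $A^{-1}$'s, which is the same on both paths.
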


\begin{proof}
    \textit{a)} There are two things to verify. 
    Firstly, we have to check if the coefficient $\lambda_{i,k}$ defined above is well-defined, and then we have to verify that it is independent of the path chosen.\\

    Let $M_1\rightarrow M_2$ be an arrow in the algorithm graph such that $M_2=M_1A_1(zaq^{-r_i}t)^{-1}$. We assume $$M_1=Y_i(za)\prod_{j=1}^kY_i(zb_j)\prod_{u=1}^lY_i(zc_u)^{-1}M',$$
    with $M'\in\C[Y_j(zd)]_{j\neq i,d\in \C^* q^\Z t^\Z}$. The coefficient $\lambda_{M_2}$ is defined in $\eqref{coefalg}$ as a quotient of residues. 
    In section \ref{calculcoeffappendix}, we get the following explicit formula :
    $$\lambda_{M_2}=\lambda_{M_1}\prod_{j=1}^k\frac{(1-q^{-2r_i}ab_j^{-1})(1-t^2ab_j^{-1})}{(1-ab_j^{-1})(1-q^{-2r_i}t^2ab_j^{-1})}\prod_{u=1}^\ell\frac{(1-ac_u^{-1})(1-q^{-2r_i}t^2ac_u^{-1})}{(1-q^{-2r_i}ac_u^{-1})(1-t^2ac_u^{-1})},$$
    where $\lambda_{M_1}$ is the coefficient associated to the monomial $M_1$ (see equation \eqref{coeff}).\\
    The Proposition \ref{coeffwelldefprop} proves that by regularity end genericity the coefficient is well-defined.

    Now we have to prove that the coefficient is independent of the choice of the path. 
    We will prove it by induction on the height of monomials (i.e on the length of the paths). \\
    For the first monomial the coefficient is well-defined and equals $1$. \\
    Then, we assume all the coefficients above a fixed monomial $M$ are well-defined.\\
    We assume that this monomial $M$ is obtained by a transformation $A_i(zaq^{-r_i}t)^{-1}$ from a monomial $N_0$ and also by a transformation $A_j(zbq^{-r_j}t)^{-1}$ from a monomial $M_0$. 
    Hence according to the second point of Corollary \ref{cycle}, there exists $k\in\N$, $b_1,\ldots, b_{k+1}\in \C^* q^\Z t^\Z$ and monomials $N$, $M_1,\ldots,M_k$ and $N_1,\ldots,N_k$ such that we have the two paths :
    \begin{center}
        \scalebox{0.7}{\begin{tikzpicture}[
    >=Stealth,
    every node/.style={align=center, text width=4cm},
    every edge/.style={->, thick}
]
\node (N1) at (5, 0) {\shortstack{$N_{k+1}:=N=:M_{k+1}$}};
\node (N2) at (2, -1.5) { {$N_k$}};
\node (N3) at (8, -1.5) {\shortstack{$M_k$}};
\node (N4) at (2, -3) { {$\vdots$}};
\node (N5) at (8, -3) {\shortstack{$\vdots$}};
\node (N6) at (2, -4.5) { {$N_1$}};
\node (N7) at (8, -4.5) {\shortstack{$M_1$}};
\node (N8) at (2, -6) { {$N_0$}};
\node (N9) at (8, -6) {\shortstack{$M_0$}};
\node (N10) at (5, -7.5) { \shortstack{$M$}};
\draw[->] (N1) -- (N2) node[midway, left] {$A_{\varepsilon_{j_{k+1}}}(zb_{j_{k+1}})^{-1}$}; 
\draw[->] (N2) -- (N4) node[midway, left] {$A_{\varepsilon_{j_{k}}}(zb_{j_{k}})^{-1}$};
\draw[->] (N4) -- (N6) node[midway, left] {};
\draw[->] (N6) -- (N8) node[midway, left] {$A_{\varepsilon_{j_1}}(zb_{j_1})^{-1}$};
\draw[->] (N1) -- (N3) node[midway, right] {$A_{\varepsilon_{k+1}}(zb_{k+1})^{-1}$};
\draw[->] (N8) -- (N10) node[midway, left] {$A_{\varepsilon_{j_0}}(zb_{j_0})^{-1}$};
\draw[->] (N3) -- (N5) node[midway, right] {$A_{\varepsilon_{k}}(zb_{k})^{-1}$};
\draw[->] (N5) -- (N7) node[midway, right] {};
\draw[->] (N7) -- (N9) node[midway, right] {$A_{\varepsilon_1}(zb_1)^{-1}$};
\draw[->] (N9) -- (N10) node[midway, right] {$A_{\varepsilon_{0}}(zb_{0})^{-1}$};
\end{tikzpicture}}
\end{center}

Because of the algebraic independance of the variables $A_i(za)$, $u\mapsto j_u$ is a permutation of $\{0,\ldots,k+1\}$.
    To simplify the notations we write $a_u=b_u q^{r_{\varepsilon_u}}t^{-1}$. For clarity, we will omit all the $Y_u(zc)^\varepsilon$ with $u\not\in\{i,j\}$.\\
    Let $\varepsilon_r',a_r',\nu_r'$ such that $N=\prod_{u=0}^{k+1}Y_{\varepsilon_u}(za_u)\prod_{r=1}^dY_{\varepsilon'_r}(za_r')^{\nu_r}$. 
    Here, $Y_{\varepsilon_u}(za_u)$ is admissible in $M_u$. We can have $a_u=a_r'$ and $\nu_r=-1$, it would mean that $Y_{\varepsilon_u}(za_u)$ does not appear in $N$ but appears after some transformations.\\
    The Corollary \ref{cycle} ensures that for all $u\in\{0,\ldots,k+1\}$, we have $\varepsilon_u\in\{i,j\}$.\\
    Thus, we can consider the Lie subalgebra induced by the nodes $i,j\in I$. This Lie algebra has rank less or equal than 2, then it can be of type $A_1$, $A_1\times A_1$, $A_2$, $B_2$, or $G_2$.\\
    \underline{\textbf{Type $A_2$ :}}
    We have $$M_{k}=N A_{\varepsilon_{k+1}}(zb_{k+1})^{-1}=NY_{\varepsilon_{k+1}}(za_{k+1})^{-1}Y_{\varepsilon_{k+1}}(za_{k+1}q^{-2r_{\varepsilon_{k+1}}}t^2)^{-1}Y_{\overline{\varepsilon_{k+1}}}(za_{k+1}q^{-r_{\varepsilon_{k+1}}}t)$$ 
    with $\overline \varepsilon=i $ if $\varepsilon=j$ and $\overline \varepsilon=j $ if $\varepsilon=i$.\\
    Hence, for $u\in\llbracket-1,k+1\rrbracket$,
    $$M_u=\prod_{r=0}^u Y_{\varepsilon_r}(za_r)\prod_{r=u+2}^{k+1} Y_{\varepsilon_r}(za_rq^{-2r_{\varepsilon_r}}t^2)^{-1}\prod_{r=1}^dY_{\varepsilon'_r}(za_r')^{\nu_r}\prod_{r=u+2}^{k+1}Y_{\overline{\varepsilon_r}}(za_rq^{-r_{\varepsilon_r}}t)$$
    Thus,
    {\footnotesize
    \begin{equation*}
        \begin{aligned}
            \lambda_{M_u} &= \lambda_{M_{u+1}}
            \prod_{\substack{r=0\\ \varepsilon_{u+1}=\varepsilon_r}}^{u}\frac{(1-a_{u+1}a_r^{-1}q^{-2r_{\varepsilon_r}})(1-a_{u+1}a_r^{-1}t^{2})}{(1-a_{u+1}a_r^{-1}q^{-2r_{\varepsilon_r}}t^2)(1-a_{u+1}a_r^{-1})}
            \prod_{\substack{r=u+2\\ \varepsilon_{u+1}=\varepsilon_r}}^{k}\frac{(1-a_{u+1}a_r^{-1})(1-a_{u+1}a_r^{-1}q^{2r_{\varepsilon_r}}t^{-2})}{(1-a_{u+1}a_r^{-1}t^{-2})(1-a_{u+1}a_r^{-1}q^{2r_{\varepsilon_r}})}\\[2mm]
             \times& \prod_{\substack{r=0\\ \varepsilon_{u+1}=\varepsilon_r'}}^{s}\left(\frac{(1-a_{u+1}a_r'^{-1}q^{-2r_{\varepsilon_r'}})(1-a_{u+1}a_r'^{-1}t^{2})}{(1-a_{u+1}a_r'^{-1}q^{-2r_{\varepsilon_r'}}t^2)(1-a_{u+1}a_r'^{-1})}\right)^{\nu_r}
            \prod_{\substack{r=u+2\\ \varepsilon_{u+1}\neq\varepsilon_r}}^{k+1}\frac{(1-a_{u+1}a_r^{-1}q^{-r_{\varepsilon_r}}t^{-1})(1-a_{u+1}a_r^{-1}q^{r_{\varepsilon_r}}t)}{(1-a_{u+1}a_r^{-1}q^{-r_{\varepsilon_r}}t)(1-a_{u+1}a_r^{-1}q^{r_{\varepsilon_r}}t^{-1})},
        \end{aligned}
    \end{equation*}}

    This product is well defined by the first point of this proof.\\
    To simplify notations we assume $\lambda_{M_{k+1}}=1$ and we put $\gamma_u$ to be this big product :
    $$\gamma_u=\lambda_{M_u}\lambda_{M_{u+1}}^{-1}.$$ Then by iterating this computation $k$ times, following the right path, we get 
    {\footnotesize
    \begin{align*}
        \lambda_M&=\prod_{u=-1}^k\gamma_{u}\\[4mm]
        &=\prod_{u=0}^{k+1}\left(\prod_{\substack{r=0\\ \varepsilon_{u}=\varepsilon_r}}^{u-1}\frac{(1-a_{u}a_r^{-1}q^{-2r_{\varepsilon_{u}}})(1-a_{u}a_r^{-1}t^{2})}{(1-a_{u}a_r^{-1}q^{-2r_{\varepsilon_{u}}}t^2)(1-a_{u}a_r^{-1})}\prod_{\substack{r=u+1\\ \varepsilon_{u}=\varepsilon_r}}^{k}\frac{(1-a_{u}a_r^{-1})(1-a_{u}a_r^{-1}q^{2r_{\varepsilon_{u}}}t^{-2})}{(1-a_{u}a_r^{-1}t^{-2})(1-a_{u}a_r^{-1}q^{2r_{\varepsilon_{u}}})}\right)\\[4mm]
        &\times\prod_{u=0}^{k+1}\Bigg(\prod_{\substack{r=0\\ \varepsilon_{u}=\varepsilon_r'}}^{s}\left(\frac{(1-a_{u}a_r'^{-1}q^{-2r_{\varepsilon_{u}}})(1-a_{u}a_r'^{-1}t^{2})}{(1-a_{u}a_r'^{-1}q^{-2r_{\varepsilon_{u}}}t^2)(1-a_{u}a_r'^{-1})}\right)^{\nu_r}\prod_{\substack{r=u+1\\ \varepsilon_{u}\neq\varepsilon_r}}^{k+1}\frac{(1-a_{u}a_r^{-1}q^{-r_{\varepsilon_{u}}}t^{-1})(1-a_{u}a_r^{-1}q^{r_{\varepsilon_{u}}}t^{})}{(1-a_{u}a_r^{-1}q^{-r_{\varepsilon_{u}}}t)(1-a_{u}a_r^{-1}q^{r_{\varepsilon_{u}}}t^{-1})}\Bigg)\\[4mm]
        &=\prod_{u=0}^{k+1}\left(\prod_{\substack{r=0\\ \varepsilon_{u}=\varepsilon_r}}^{u-1}\frac{(1-a_{u}a_r^{-1}q^{-2r_{\varepsilon_{u}}})(1-a_{u}a_r^{-1}t^{2})}{(1-a_{u}a_r^{-1}q^{-2r_{\varepsilon_{u}}}t^2)(1-a_{u}a_r^{-1})}\prod_{\substack{r=u+1\\ \varepsilon_{u}=\varepsilon_r}}^{k}\frac{(1-a_{r}a_u^{-1}q^{-2r_{\varepsilon_{u}}}t^2)(1-a_{r}a_u^{-1})}{(1-a_{r}a_u^{-1}q^{-2r_{\varepsilon_{u}}})(1-a_{r}a_u^{-1}t^{2})}\right)\\[4mm]
        &\times\prod_{u=0}^{k+1}\prod_{\substack{r=0\\ \varepsilon_{u}=\varepsilon_r'}}^{s}\left(\frac{(1-a_{u}a_r'^{-1}q^{-2r_{\varepsilon_{u}}})(1-a_{u}a_r'^{-1}t^{2})}{(1-a_{u}a_r'^{-1}q^{-2r_{\varepsilon_{u}}}t^2)(1-a_{u}a_r'^{-1})}\right)^{\nu_r}\prod_{\substack{r=u+1\\ \varepsilon_{u}\neq\varepsilon_r}}^{k+1}\frac{(1-a_{u}a_r^{-1}q^{-r_{\varepsilon_{u}}}t^{-1})(1-a_{u}a_r^{-1}q^{r_{\varepsilon_{u}}}t^{})}{(1-a_{u}a_r^{-1}q^{-r_{\varepsilon_{u}}}t)(1-a_{u}a_r^{-1}q^{r_{\varepsilon_{u}}}t^{-1})}\\[4mm]
        &=\prod_{u=0}^{k+1}\left(\prod_{\substack{r=0\\ \varepsilon_{u}=\varepsilon_r}}^{u-1}K_{u,r}\prod_{\substack{r=u+1\\ \varepsilon_{u}=\varepsilon_r}}^{k+1}{K_{r,u}}^{-1}\right)\left(\prod_{u=0}^{k+1}C_u\right)\left(\prod_{u=0}^{k+1}\prod_{\substack{r=u+1\\ \varepsilon_{u}\neq\varepsilon_r}}^{k+1}K'_{u,r}\right)\\[2mm]
        &=1\times \left(\prod_{u=0}^{k+1}C_u\right)\left(\prod_{\substack{u< r\\ \varepsilon_u\neq\varepsilon_r}}K'_{u,r}\right)
    \end{align*}}
    with for all $u,r\in\llbracket0,k+1\rrbracket$, 
    $$K_{u,r}:=\frac{(1-a_{u}a_r^{-1}q^{-2r_{\varepsilon_{u}}})(1-a_{u}a_r^{-1}t^{2})}{(1-a_{u}a_r^{-1}q^{-2r_{\varepsilon_{u}}}t^2)(1-a_{u}a_r^{-1})},\quad C_u:=\prod_{\substack{r=0\\ \varepsilon_{u}=\varepsilon_r'}}^{s}\left(\frac{(1-a_{u}a_r'^{-1}q^{-2r_{\varepsilon_{u}}})(1-a_{u}a_r'^{-1}t^{2})}{(1-a_{u}a_r'^{-1}q^{-2r_{\varepsilon_{u}}}t^2)(1-a_{u}a_r'^{-1})}\right)^{\nu_r},$$
    $$K_{u,r}':=\frac{(1-a_{u}a_r^{-1}q^{-r_{\varepsilon_{u}}}t^{-1})(1-a_{u}a_r^{-1}q^{r_{\varepsilon_{u}}}t^{})}{(1-a_{u}a_r^{-1}q^{-r_{\varepsilon_{u}}}t)(1-a_{u}a_r^{-1}q^{r_{\varepsilon_{u}}}t^{-1})}=K_{r,u}'.$$

    Similarly coefficient $\mu_M$ computed in the left path is the following :
    $$\mu_M=\left(\prod_{u=0}^{k+1}C_{j_u}\right)\left(\prod_{\substack{u< r\\ \varepsilon_u\neq\varepsilon_r}}K_{j_u,j_r}'\right).$$
    
    However, $u\mapsto j_u$ is a bijection. Hence $$\mu_M=\lambda_M$$

\underline{\textbf{Type $A_1$ or $A_1\times A_1$:}} The proof is exactly the same as for $A_2$ but without the $K'_{u,r}$.

\underline{\textbf{Type $B_2$:}} Let $$A_i(z)=Y_i(zq^{-1}t)Y_i(zqt^{-1})Y_j(z)^{-1}$$
$$A_j(z)=Y_j(zq^{-2}t)Y_j(zq^2t^{-1})Y_i(zq^{-1})^{-1}Y_i(zq)^{-1}$$
$$(r_i,r_j)=(1,2)$$
Thus, if $\varepsilon_{k+1}=i$, we get 
$$M_{k}=N A_{\varepsilon_{k+1}}(zb_{k+1})^{-1}=NY_{\varepsilon_{k+1}}(za_{k+1})^{-1}Y_{\varepsilon_{k+1}}(za_{k+1}q^{-2r_{\varepsilon_{k+1}}}t^2)^{-1}Y_{\overline{\varepsilon_{k+1}}}(za_{k+1}q^{-1}t)$$
and if $\varepsilon_{k+1}=j$, we get
\begin{align*}
    M_{k}&=N A_{\varepsilon_{k+1}}(zb_{k+1})^{-1}\\
    &=NY_{\varepsilon_{k+1}}(za_{k+1})^{-1}Y_{\varepsilon_{k+1}}(za_{k+1}q^{-2r_{\varepsilon_{k+1}}}t^2)^{-1}Y_{\overline{\varepsilon_{k+1}}}(za_{k+1}q^{-1}t)Y_{\overline{\varepsilon_{k+1}}}(za_{k+1}q^{-3}t).
\end{align*}
Hence, for $u\in\llbracket-1,k+1\rrbracket$,
{\scriptsize
    \begin{align*}
        M_u&=\prod_{r=1}^u Y_{\varepsilon_r}(za_r)\prod_{r=u+2}^{k+1} Y_{\varepsilon_r}(za_rq^{-2r_{\varepsilon_r}}t^2)^{-1}\prod_{r=1}^dY_{\varepsilon'_r}(za_r')^{\nu_r}\prod_{\substack{r=u+2\\ \varepsilon_r=i}}^{k+1}Y_{\overline{\varepsilon_r}}(za_rq^{-1}t)\prod_{\substack{r=u+2\\ \varepsilon_r=j}}^{k+1}Y_{\overline{\varepsilon_r}}(za_rq^{-1}t)Y_{\overline{\varepsilon_r}}(za_rq^{-3}t)
\end{align*}}

    By a straightforward computation we get :
{\scriptsize
    \begin{align*}
    \lambda_{M_u} &= \lambda_{M_{u+1}}\prod_{\substack{r=0\\ \varepsilon_{u+1}=\varepsilon_r}}^{u}\frac{(1-a_{u+1}a_r^{-1}q^{-2r_{\varepsilon_{u+1}}})(1-a_{u+1}a_r^{-1}t^{2})}{(1-a_{u+1}a_r^{-1}q^{-2r_{\varepsilon_{u+1}}}t^2)(1-a_{u+1}a_r^{-1})}
    \prod_{\substack{r=u+2\\ \varepsilon_{u+1}=\varepsilon_r}}^{k}\frac{(1-a_{u+1}a_r^{-1})(1-a_{u+1}a_r^{-1}q^{2r_{\varepsilon_{u+1}}}t^{-2})}{(1-a_{u+1}a_r^{-1}t^{-2})(1-a_{u+1}a_r^{-1}q^{2r_{\varepsilon_{u+1}}})}\\
    &\quad \times \prod_{\substack{r=0\\ \varepsilon_{u+1}=\varepsilon_r'}}^{s}\left(\frac{(1-a_{u+1}a_r'^{-1}q^{-2r_{\varepsilon_{u+1}}})(1-a_{u+1}a_r'^{-1}t^{2})}{(1-a_{u+1}a_r'^{-1}q^{-2r_{\varepsilon_{u+1}}}t^2)(1-a_{u+1}a_r'^{-1})}\right)^{\nu_r}
    \prod_{\substack{r=u+2\\ \varepsilon_{u+1}\neq\varepsilon_r\\ \varepsilon_r=i}}^{k+1}\frac{(1-a_{u+1}a_r^{-1}q^{-3}t^{-1})(1-a_{u+1}a_r^{-1}qt)}{(1-a_{u+1}a_r^{-1}q^{-3}t)(1-a_{u+1}a_r^{-1}qt^{-1})}\\
    &\quad \times \prod_{\substack{r=u+2\\ \varepsilon_{u+1}\neq\varepsilon_r\\ \varepsilon_r=j}}^{k+1}\frac{(1-a_{u+1}a_r^{-1}q^{3}t)(1-a_{u+1}a_r^{-1}q^{-1}t^{-1})}{(1-a_{u+1}a_r^{-1}q^{3}t^{-1})(1-a_{u+1}a_r^{-1}q^{-1}t)}
\end{align*}}
Hence, $$\lambda_M=\left(\prod_{\substack{u< r\\ \varepsilon_u=\varepsilon_r}}K_{u,r}\right)\left(\prod_{\substack{r< u\\ \varepsilon_u=\varepsilon_r}}K_{r,u}\right)^{-1}\left(\prod_{u=0}^{k+1}C_u\right)\left(\prod_{\substack{u,r\\ \varepsilon_r=i\\ \varepsilon_u=j}}K''_{u,r}\right)=\left(\prod_{u=0}^{k+1}C_u\right)\left(\prod_{\substack{u,r\\\varepsilon_r=i\\ \varepsilon_u=j}}K''_{u,r}\right)$$
with for all $u,r\in\llbracket0,k+1\rrbracket$, 
    $$K_{u,r}:=\frac{(1-a_{u}a_r^{-1}q^{-2r_{\varepsilon_{u}}})(1-a_{u}a_r^{-1}t^{2})}{(1-a_{u}a_r^{-1}q^{-2r_{\varepsilon_{u}}}t^2)(1-a_{u}a_r^{-1})},\quad C_u:=\prod_{\substack{r=0\\ \varepsilon_{u}=\varepsilon_r'}}^{s}\left(\frac{(1-a_{u}a_r'^{-1}q^{-2r_{\varepsilon_{u}}})(1-a_{u}a_r'^{-1}t^{2})}{(1-a_{u}a_r'^{-1}q^{-2r_{\varepsilon_{u}}}t^2)(1-a_{u}a_r'^{-1})}\right)^{\nu_r},$$
    $$K_{u,r}'':=\frac{(1-a_{u}a_r^{-1}q^{-3}t^{-1})(1-a_{u}a_r^{-1}q^{}t^{})}{(1-a_{u}a_r^{-1}q^{-3}t)(1-a_{u}a_r^{-1}qt^{-1})},$$
    and similarly
    $$\mu_M=\left(\prod_{u=0}^{k+1}C_{j_u}\right)\left(\prod_{\substack{\varepsilon_{j_r}=i\\ \varepsilon_{j_u}=j}}K''_{j_u,j_r}\right)=\lambda_M$$
    and the result follows in type $B_2$.

\underline{\textbf{Type $G_2$:}} Let $$A_i(z)=Y_i(zq^{-3}t)Y_i(zq^3t^{-1})Y_j(zq^{-2})^{-1}Y_j(z)^{-1}Y_j(zq^{2})^{-1}$$
$$A_j(z)=Y_j(zq^{-1}t)Y_j(zqt^{-1})Y_i(z)^{-1}$$
$$(r_i,r_j)=(3,1)$$
Thus, if $\varepsilon_{k+1}=i$, we get 
\begin{align*}
    M_{k}&=N A_{\varepsilon_{k+1}}(zb_{k+1})^{-1}\\
    &=NY_{\varepsilon_{k+1}}(za_{k+1})^{-1}Y_{\varepsilon_{k+1}}(za_{k+1}q^{-2r_{\varepsilon_{k+1}}}t^2)^{-1}Y_{\overline{\varepsilon_{k+1}}}(za_{k+1}q^{-5}t)\\[2mm]
    &\qquad\qquad\times Y_{\overline{\varepsilon_{k+1}}}(za_{k+1}q^{-3}t)Y_{\overline{\varepsilon_{k+1}}}(za_{k+1}q^{-1}t)
\end{align*}

and if $\varepsilon_{k+1}=j$, we get
$$M_{k}=N A_{\varepsilon_{k+1}}(zb_{k+1})^{-1}=NY_{\varepsilon_{k+1}}(za_{k+1})^{-1}Y_{\varepsilon_{k+1}}(za_{k+1}q^{-2r_{\varepsilon_{k+1}}}t^2)^{-1}Y_{\overline{\varepsilon_{k+1}}}(za_{k+1}q^{-1}t)$$
Hence, for $u\in\llbracket-1,k+1\rrbracket$,
    $$M_u=\prod_{r=1}^u Y_{\varepsilon_r}(za_r)\prod_{r=u+2}^{k+1} Y_{\varepsilon_r}(za_rq^{-2r_{\varepsilon_r}}t^2)^{-1}\prod_{r=1}^dY_{\varepsilon'_r}(za_r')^{\nu_r}\prod_{\substack{r=u+2\\ \varepsilon_r=j}}^{k+1}Y_{\overline{\varepsilon_r}}(za_rq^{-1}t)$$
    $$\times\prod_{\substack{r=u+2\\ \varepsilon_r=i}}^{k+1}Y_{\overline{\varepsilon_r}}(za_rq^{-1}t)Y_{\overline{\varepsilon_r}}(za_rq^{-3}t)Y_{\overline{\varepsilon_r}}(za_rq^{-5}t)$$
    Again by a straightforward computation we get :
{\scriptsize
\begin{equation*}
    \begin{aligned}
        \lambda_{M_u} &= \lambda_{M_{u+1}}
        \prod_{\substack{r=0\\ \varepsilon_{u+1}=\varepsilon_r}}^{u}\frac{(1-a_{u+1}a_r^{-1}q^{-2r_{\varepsilon_{u+1}}})(1-a_{u+1}a_r^{-1}t^{2})}{(1-a_{u+1}a_r^{-1}q^{-2r_{\varepsilon_{u+1}}}t^2)(1-a_{u+1}a_r^{-1})}
        \prod_{\substack{r=u+2\\ \varepsilon_{u+1}=\varepsilon_r}}^{k}\frac{(1-a_{u+1}a_r^{-1})(1-a_{u+1}a_r^{-1}q^{2r_{\varepsilon_{u+1}}}t^{-2})}{(1-a_{u+1}a_r^{-1}t^{-2})(1-a_{u+1}a_r^{-1}q^{2r_{\varepsilon_{u+1}}})}\\[2mm]
        &\quad\times\prod_{\substack{r=0\\ \varepsilon_{u+1}=\varepsilon_r'}}^{s}\left(\frac{(1-a_{u+1}a_r'^{-1}q^{-2r_{\varepsilon_{u+1}}})(1-a_{u+1}a_r'^{-1}t^{2})}{(1-a_{u+1}a_r'^{-1}q^{-2r_{\varepsilon_{u+1}}}t^2)(1-a_{u+1}a_r'^{-1})}\right)^{\nu_r}
        \prod_{\substack{r=u+2\\ \varepsilon_{u+1}\neq\varepsilon_r\\ \varepsilon_r=i}}^{k+1}\frac{(1-a_{u+1}a_r^{-1}q^{-1}t^{-1})(1-a_{u+1}a_r^{-1}q^{5}t)}{(1-a_{u+1}a_r^{-1}q^{-1}t)(1-a_{u+1}a_r^{-1}q^{5}t^{-1})}\\[2mm]
        &\quad\times\prod_{\substack{r=u+2\\ \varepsilon_{u+1}\neq\varepsilon_r\\ \varepsilon_r=j}}^{k+1}\frac{(1-a_{u+1}a_r^{-1}q^{-5}t^{-1})(1-a_{u+1}a_r^{-1}qt)}{(1-a_{u+1}a_r^{-1}q^{-3}t)(1-a_{u+1}a_r^{-1}qt^{-1})}
    \end{aligned}
\end{equation*}}

Hence, $$\lambda_M=1\times\left(\prod_{u=0}^{k+1}C_u\right)\left(\prod_{\substack{\varepsilon_r=i\\ \varepsilon_u=j}}K'''_{u,r}\right)$$
with for all $u,r\in\llbracket0,k+1\rrbracket$, 
    $$C_u:=\prod_{\substack{r=0\\ \varepsilon_{u}=\varepsilon_r'}}^{s}\left(\frac{(1-a_{u}a_r'^{-1}q^{-2r_{\varepsilon_{u}}})(1-a_{u}a_r'^{-1}t^{2})}{(1-a_{u}a_r'^{-1}q^{-2r_{\varepsilon_{u}}}t^2)(1-a_{u}a_r'^{-1})}\right)^{\nu_r},~ K_{u,r}''':=\frac{(1-a_{u}a_r^{-1}q^{5}t^{})(1-a_{u}a_r^{-1}q^{-1}t^{-1})}{(1-a_{u}a_r^{-1}q^{5}t^{-1})(1-a_{u}a_r^{-1}q^{-1}t)},$$
    and similarly
    $$\mu_M=\left(\prod_{u=0}^{k+1}C_{j_u}\right)\left(\prod_{\substack{\varepsilon_{j_r}=i\\ \varepsilon_{j_u}=j}}K'''_{j_u,j_r}\right)=\lambda_M,$$
    and the result follows in type $G_2$.

Finally, if $n$ arrows target a monomial $M$ then the coefficient defined from each arrows are pairwise equals. 
Thus, this coefficient is independent on the path taken and the algorithm is well-defined.
\vspace{5mm}

\textit{b)} Let $\Phi(z)=T(m)$ be a field obtained by the algorithm from a dominant monomial $m$. We want to prove that for all $i\in I$, $\Res_w[\Phi(z),S_i^-(w)]=0$.\\
To prove this, we will prove that all terms of all residues $\Res_w[M,S_i^-(w)]$ for all monomial $M$ appearing in the algorithm cancel each other.\\
In the section \ref{computation}, we computed the commutator $[M,S_i^-(w)]$ for all generic regular monomial $M$. Let us recall the result. 
Let $M$ be a regular generic monomial appearing in the algorithm and let $i\in I$.
Let $$R_M^{(i)}=\{a\in \C^* q^\Z t^\Z \mid d_{i,a}(M)=1, \quad d_{i,aq^{-2r_i}}(M)=0\},$$
and $$S_M^{(i)}=\{b\in \C^* q^\Z t^\Z \mid  d_{i,b}(M)=-1,\quad d_{i,bq^{2r_i}}(M)=0\}.$$
Then
\begin{equation}\label{commutatorhhh}
[M,S_i^-(w)]=\sum_{a\in R_M^{(i)}}\delta\left(\frac{w}{zaq^{-r_i}}\right)C_{a,M}:M S_i^-(w):+\sum_{b\in S_M^{(i)}}\delta\left(\frac{w}{zbq^{r_i}} \right)D_{b,M}:M S_i^-(w):
\end{equation}
Let $a\in R_M^{(i)}$. By definition, $d_{i,a}(M)=1$ and $d_{i,aq^{-2r_i}}(M)=0$. Let $s\geq0$ such that 
$$ d_{i,a}(M)=d_{i,at^2}(M)=\cdots=d_{i,at^{2s}}(M)=1, \quad d_{i,at^{2s+2}}(M)=0. $$
We remark that we do not assume anything on $d_{i,at^{-2}}(M)$.
Moreover, if for a $u\in\llbracket 1,s \rrbracket$,we have $d_{i,aq^{-2r_i}t^{2u}}(M)=-1$,
then $M$ is not regular as $d_i,at^{2u}(M)=1$. Additionally, if for a $u\in\llbracket 1,s \rrbracket$, $d_{i,aq^{-2r_i}t^{2u}}(M)=1$,
then $$d_{i,aq^{-2r_i}t^{2u}}(M)=d_{i,at^{2(u-1)}}(M)=1.$$
By regularity of $M$, $d_{i,aq^{-2r_i}t^{2(u-1)}}(M)=1$. Iterating this argument, we get $d_{i,aq^{-2r_i}}(M)=1$ which is a contradiction. Hence,
$$ d_{i,aq^{-2r_i}}(M)=d_{i,aq^{-2r_i}t^2}(M)=\cdots=d_{i,aq^{-2r_i}t^{2s}}(M)=0.$$
We set 
$$M=M'Y_i(za)Y_i(zat^2)\dots Y_i(zat^{2s})Y_i(za_1)\dots Y_i(za_k)Y_i(zb_1)^{-1}\dots Y_i(zb_n)^{-1},$$
where $M'$ is a monomial in the variables $Y_j(zc)$ for $c\in \C^* q^\Z t^\Z$, $j\neq i$.\\
Then, by a straightforward computation, we have
$$ C_{a,M}=q^{2(s+1+k-n)r_i}(1-q^{-2r_i})\prod_{v=1}^s\frac{1-q^{-2r_i}t^{-2v}}{1-t^{-2v}}\prod_{j=1}^k\frac{1-q^{-2r_i}aa_j^{-1}}{1-aa_j^{-1}}
\prod_{u=1}^n\frac{1-ab_u^{-1}}{1-q^{-2r_i}ab_u^{-1}} $$
Moreover, by definition of the algorithm and by construction of $s$, we get the following admissibility and path of transformations :

\begin{center}
        \scalebox{0.7}{
            \begin{tikzpicture}[
    >=Stealth,
    every node/.style={align=center, text width=8cm},
    every edge/.style={->, thick}
]
\node (N1) at (8, 0) {\shortstack{$M$}};
\node (N3) at (8, -2) {\shortstack{$MA_i(zaq^{-r_i}t^{2s+1})^{-1}$}};
\node (N5) at (8, -4) {\shortstack{$\vdots$}};
\node (N7) at (8, -7) {$N:=MA_i(zaq^{-r_i}t^{2s+1})^{-1}\cdots A_i(zaq^{-r_i}t)^{-1}$};
\draw[->] (N1) -- (N3) node[midway, right] {$A_i(zaq^{-r_i}t^{2s+1})^{-1}$};
\draw[->] (N3) -- (N5) node[midway, right] {$A_i(zaq^{-r_i}t^{2s-1})^{-1}$};
\draw[->] (N5) -- (N7) node[midway, right] {$A_i(zaq^{-r_i}t)^{-1}$};
\end{tikzpicture}}
\end{center}

Then, $$N=N'Y_i(zaq^{-2r_i}t^2)^{-1}\dots Y_i(zaq^{-2r_i}t^{2s+2})^{-1}Y_i(za_1)\dots Y_i(za_k)Y_i(zb_1)^{-1}\dots Y_i(zb_n)^{-1}$$
where $N'$ is a monomial in the variables $Y_j(zc)$ for $c\in \C^* q^\Z t^\Z$, $j\neq i$.\\
Let us prove that $aq^{-2r_i}t^{2s+2}\in S_N^{(i)}$. Indeed, by definition of the $A_i(zc)$ transformations and by regularity, we have
$$ d_{i,aq^{-2r_i}t^{2s+2}}(N)=d_{i,aq^{-2r_i}t^{2s+2}}(M)-1=0-1=-1 $$
and
$$ d_{i,at^{2s+2}}(N)=d_{i,at^{2s+2}}(M)=0. $$
Then the delta function $\delta\left(\frac{w}{zaq^{-r_i}t^{2s+2}}\right)$ appears in the commutator $[N,S_i^-(w)]$. 
Applying $s+1$ difference relations, we get
{\footnotesize
    \begin{align*}
    &\delta\left(\frac{w}{zaq^{-r_i}t^{2s+2}}\right)D_{b,N}:NS_i^-(w): \\
    &\quad = \delta\left(\frac{wt^{-(2s+2)}}{zaq^{-r_i}}\right) D_{b,N}:MA_i(zaq^{-r_i}t^{2s+1})^{-1}\cdots A_i(zaq^{-r_i}t)^{-1}S_i^-(w): \\[2mm]
    &\quad = \delta\left(\frac{wt^{-(2s+2)}}{zaq^{-r_i}}\right)D_{b,N}:MA_i(wt^{-1})^{-1}\cdots A_i(wt^{-2s-1})^{-1}S_i^-(w): \\[2mm]
    &\quad = q^{2(s+1)r_i}t^{-2(s+1)}\delta\left(\frac{wt^{-(2s+2)}}{zaq^{-r_i}}\right)D_{b,N}:MS_i^-(wt^{-2s-2}):,
\end{align*}}
with $b=aq^{-2r_i}t^{2s+2}$. Again, by a straightforward computation,
$$D_{b,N}=q^{2(k-n-s-1)r_i}(1-q^{2r_i})\prod_{v=0}^{s-1}\frac{1-q^{2r_i}t^{2(s-v)}}{1-t^{2(s-v)}}\prod_{j=1}^k\frac{1-q^{-2r_i}t^{2s+2}aa_j^{-1}}{1-t^{2s+2}aa_j^{-1}}\prod_{u=1}^n\frac{1-t^{2s+2}ab_u^{-1}}{1-q^{-2r_i}t^{2s+2}ab_u^{-1}}$$
To cancel the residue, we have to verify that 
{\footnotesize
    \begin{align*}
    0&=\Res_w\Bigg[\lambda_M \delta\left(\frac{w}{zaq^{-r_i}}\right)C_{a,M}:M S_i^-(w):\\[2mm]
    &\qquad\qquad+\lambda_Nq^{2(s+1)r_i}t^{-2(s+1)}\delta\left(\frac{wt^{-(2s+2)}}{zaq^{-r_i}}\right)D_{b,N}:MS_i^-(wt^{-2s-2}):\Bigg].
\end{align*} }
Thus, the coefficients have to satisfy :
{\footnotesize 
\begin{align*}
    \lambda_N&=-\lambda_Mq^{-2s-2}\frac{C_{a,M}}{D_{b,N}}\\[4mm]
    &=-\lambda_Mq^{2(s+1)r_i}\frac{1-q^{-2r_i}}{1-q^{2r_i}}\prod_{v=1}^s\frac{(1-q^{-2r_i}t^{-2v})(1-t^{2v})}{(1-t^{-2v})(1-q^{2r_i}t^{2v})}\times \\[2mm]
    &\times \prod_{j=1}^k\frac{(1-q^{-2}aa_j^{-1})(1-t^{2s+2}aa_j^{-1})}{(1-q^{-2}t^{2s+2}aa_j^{-1})(1-aa_j^{-1})}\prod_{u=1}^n\frac{(1-q^{-2}t^{2s+2}ab_u^{-1})(1-ab_u^{-1})}{(1-q^{-2}ab_u^{-1})(1-t^{2s+2}ab_u^{-1})} \\[4mm]
    &=\lambda_M\prod_{j=1}^k\frac{(1-q^{-2}aa_j^{-1})(1-t^{2s+2}aa_j^{-1})}{(1-q^{-2}t^{2s+2}aa_j^{-1})(1-aa_j^{-1})}\prod_{u=1}^n\frac{(1-q^{-2}t^{2s+2}ab_u^{-1})(1-ab_u^{-1})}{(1-q^{-2}ab_u^{-1})(1-t^{2s+2}ab_u^{-1})}
\end{align*}}
Applying recursively the formula \eqref{coefalg} for each transformation we get :

{\footnotesize
    \begin{align*}
   \lambda_{N}&=\lambda_Mq^{-2(s+1)}\prod_{j=0}^s\frac{(1-t^{2(j+1)})(1-q^2t^{2(j-s-1)})}{(1-q^{-2}t^{2(j+1)})(1-t^{2(j-s-1)})}\\[4mm]
   &\qquad\times\prod_{m=0}^s\left[\prod_{j=1}^k\frac{(1-q^{-2}t^{2m}aa_j^{-1})(1-t^{2m+2}aa_j^{-1})}{(1-q^{-2}t^{2m+2}aa_j^{-1})
   (1-t^{2m}ab_j^{-1})}\prod_{u=1}^n\frac{(1-q^{-2}t^{2m+2}ab_u^{-1})(1-t^{2m}ab_u^{-1})}{(1-q^{-2}t^{2m}ab_u^{-1})(1-t^{2m+2}ab_u^{-1})} \right]\\[4mm]
   &=\lambda_M \prod_{j=1}^k\frac{(1-q^{-2}aa_j^{-1})(1-t^{2s+2}aa_j^{-1})}{(1-q^{-2}t^{2s+2}aa_j^{-1})(1-aa_j^{-1})}\prod_{u=1}^n\frac{(1-q^{-2}t^{2s+2}ab_u^{-1})(1-ab_u^{-1})}{(1-q^{-2}ab_u^{-1})(1-t^{2s+2}ab_u^{-1})}.
\end{align*}}
Hence, both delta functions cancel each other.\\
To conclude, we will prove that there is a one to one correspondence between the elements of the two following sets : 
$$ R:=\bigcup_{M\in\mathbf{A}(m)}\left\{(M,i,a)\mid i\in I,~a\in R_M^{(i)}\right\}\overset{1:1}{\longleftrightarrow}\bigcup_{M\in\mathbf{A}(m)}\left\{(M,i,a)\mid i\in I,~a\in S_M^{(i)}\right\}=:S,$$
We recall that $\mathbf{A}(m)$ is the set of monomials appearing in the algorithm, or equivalently the set of vertices of the graph $G(m)$.\\
To lighten the notations, for all $(M,i,a)\in R$, $(N,i,b)\in S$, we denote by $f_{(M,i,a)}(z,w)$ and $g_{(N,i,b)}(z,w)$ the terms of the sum \eqref{commutatorhhh}. We get:
$$[T(m),:S_i^-(w):]=\sum_{(M,i,a)\in R}f_{(M,i,a)}(z,w)+\sum_{(N,i,b)\in S}g_{(N,i,b)}(z,w).$$
Let $(M,i,a)\in R$. There exists $s\geq0$ such that $d_{i,at^{2j}}(M)=1$, and $d_{i,aq^{-2r_i}t^{2j}}(M)=0$ for $0\le j\le s$ and $d_{i,at^{2(s+1)}}(M)=0$.\\ 
Then as below, let $$N=MA_i(zaq^{-r_i}t^{2s+1})^{-1}\cdots A_i(zaq^{-r_i}t)^{-1}.$$ 
We proved that $(N,i,aq^{-2r_i}t^{2s+2})\in S$, and
$$ \Res_w\left[f_{(M,i,a)}(z,w)+g_{(N,i,aq^{-2r_i}t^{2s+2})}(z,w)\right]=0.$$
To this $(M,i,a)$, we associate $(N,i,aq^{-2r_i}t^{2s+2})$.\\
Let $(N,i,b)\in S$. There exists $s\geq0$ such that $d_{i,bt^{-2j}}(N)=-1$ for $0\le j\le s$ and $d_{i,bt^{-2(s+1)}}(N)\geq0$. Again righ as before, by regularity we can prove that $d_{i,bq^{2r_i}t^{-2j}}(N)=0$ for $0\le j\le s$.\\
If  $d_{i,bt^{-2(s+1)}}(N)=1$, then by regularity $d_{i,bt^{-2s}}(N)\ne -1$. It is impossible, thus $d_{i,bt^{-2(s+1)}}(N)=0$.\\
By the lemma \ref{algorithmreverse}, there exists a path of transformations

\begin{center}\qquad\qquad\qquad\qquad
        \scalebox{0.7}{\begin{tikzpicture}[
    >=Stealth,
    every node/.style={align=center, text width=8cm},
    every edge/.style={->, thick}
]
\node (N1) at (8, 1) {\shortstack{$M:=NA_i(zbq^{r_i}t^{-2s-1})\cdots A_i(zbq^{r_i}t^{-1})$}};
\node (N3) at (8, -2) {\shortstack{$\vdots$}};
\node (N5) at (8, -4) {\shortstack{$NA_i(zbq^{r_i}t^{-2s-1})$}};
\node (N7) at (8, -6) {\shortstack{$N$}};
\draw[->] (N1) -- (N3) node[midway, right] {$A_i(zbq^{r_i}t^{-1})^{-1}$};
\draw[->] (N3) -- (N5) node[midway, right] {$A_i(zbq^{r_i}t^{-2s+1})^{-1}$};
\draw[->] (N5) -- (N7) node[midway, right] {$A_i(zbq^{r_i}t^{-2s-1})^{-1}$};
\end{tikzpicture}}
\end{center}

Then, by construction, $(M,i,bq^{2r_i}t^{-2s-2})\in R$. Indeed, it is clear by admissibility that $d_{i,bq^{2r_i}t^{-2s-2}}(M)=1$ and if 
$d_{i,bt^{-2s-2}}(M)=1$ then $M$ is not regular as $d_{i,bt^{-2s}}(M)=-1 $.\\ 
Moreover, $$d_{i,bq^{2r_i}t^{-2s-2}}(M)=d_{i,bq^{2r_i}t^{-2s}}(M)=\cdots=d_{i,bq^{2r_i}t^{-2}}(M)= 1,$$
and $$d_{i,bq^{2r_i}}(M)= 0.$$
Hence, the previous construction associate from the tuple $(M,i,bq^{2r_i}t^{-2s-2})\in R$ the tuple $(N,i,b)\in S$.\\
Hence, 
$$\forall 1\le i\le \ell,\qquad [T(m),:S_i^-:]=0.$$

Then, we also need to prove that $$[T(m),:S_i^+:]=0.$$
All the formulas are exactly the same for $S_i^+$ except we replace $q^{-r_i}$ and $t$. The definition of the coefficient \eqref{coefalg} in the algorithm is also symmetric in $q^{-r_i}\leftrightarrow t$. 
Hence the result.

\end{proof}

Hence, as seen in Example \ref{example1}, the conditions are not too restrictive and the algorithm constructs explicit fields. 
In the next section, we will apply these results to construct \textit{fundamental} fields of $\WWqt$ and prove Conjecture \ref{conj1} in some cases.

\section{Fundamental fields in $\WWqt$}\label{sect6}
In this section, we give explicit formulas for \textit{fundamental fields} in $\WWqt$, and we prove Conjecture \ref{conj1} in new cases. 
Firstly, we adapt a proof of Frenkel and Reshetikhin \cite{MR1646483} to our context to prove that the limit $t\to1$ of any field in $\WWqt$ can be identified to a linear combination of $q$-characters. 
Then, we apply the algorithm described in the previous section to construct fundamental fields in $\WWqt$.
There are two possible applications of this algorithm.
Firstly, we can basically compute explicitly fields in $\WWqt$ by applying the algorithm to some dominant monomials. That is what we do for exceptional types.
Secondly, following the approach of Theorem 6.1 in \cite{fjm1}, we can define intuitively a set of monomials with a unique dominant monomial, and then prove that the set of monomials we introduced is exactly the set of monomials produced by the algorithm starting from this unique dominant monomial. 
That proves that there exists a linear combination of these monomials lying in $\WWqt$.
Moreover, we know that they specialize to linear combination of $q$-characters. 
This allows us to prove Conjecture \ref{conj1} in some cases. That is what we do for classical types.\\

\begin{defi}
    A field $\Phi(z)\in \WWqt$ is called \textit{fundamental} if its expansion has a unique dominant monomial equal to $Y_{i}(z)$ for some $i\in I$.
\end{defi}

We recall the Conjecture \ref{conj1} in \cite{MR1646483} we want to prove in this section.

\begin{conj}\label{conj1}
    For each $i = 1, \ldots, \ell$, there exists a field $T_i(z)$ in $\mathbf{W}_{q,t}(\mathfrak{g})$, such that $T_i(z) = Y_i(z) +$ the sum of elementary terms of the form
\[
    c(q,t) : Y_i(z) A_{i_1}(zq^{a_1}t^{b_1})^{-1} \cdots A_{i_k}(zq^{a_k}t^{b_k})^{-1} :
\]
(where $c(q,1)$ is a positive integer independent of $q$). Furthermore, the set of weights of these terms counted with multiplicity $c(q,1)$ is the set of weights of the finite-dimensional irreducible representation $V_{\omega_i}$ of $U_q(\widehat{\mathfrak{g}})$ with highest weight $\omega_i$, 
where the weight of $ :\prod_{j}Y_{i_j}(za_j)^{\varepsilon_j}:$
is $\sum_{j} \varepsilon_j \omega_{i_j}.$
\end{conj}

For $i=1$ the result is stated in \cite{MR1646483} for all classical types  (see Section 5.2). In type $A_\ell$, for all $(1\leq i\leq \ell)$, the explicit relation was given by H. Awata, H. Kubo, S. Odake and J. Shiraishi in \cite{awata1996quantum} and by B. Feigin and E. Frenkel in \cite{feigin1996quantum}.
For $\g$ of type $B_2,C_2,G_2$, the result was proved by Bouwknegt and Pilch in \cite{MR1633032}.\\
Here, we give explicitly the monomials in the fields but we do not compute explicitly the coefficients. We use the algorithm described below to prove that there exists coefficients
such that the fields lie in $\WWqt$.\\
We do it in types $A_\ell$ ($i\in\llbracket 1, \ell \rrbracket$), 
$B_\ell$ ($i\in\llbracket 1, \ell \rrbracket$), 
$C_\ell$ ($i\in\llbracket 1, \ell \rrbracket$), 
$D_\ell$ ($i\in\{1,\ell-1,\ell\}$), 
$E_6$ ($i\in\{1,5\}$), 
$E_7$ ($i=6$), 
$F_4$ ($i\in\{1,4\}$), 
$G_2$ ($i\in\{1,2\}$). \\
Firstly, we need to prove that the limit $t\to1$ of any fields in $\WWqt$ is a linear combination of $q$-characters. 
\subsection{The limit $t \to 1$: the $q$-characters}\label{sectionlimiteqchar}

In this section, we study the limit $t \to 1$ of the algebra $\mathcal{W}_{q,t}(\mathfrak{g})$. 
We recall that the parameter $t$ is defined as $t = e^{h\beta}$. 
The limit $t \to 1$ corresponds to $\beta \to 0$.\\

In all this section, we specialize $h$ in $\C\backslash i\pi\Q$. Then $q$ is generic (that is, not a root of unity).\\ 
Moreover, we only consider the spectral parameters in $\C^*$. 

\subsubsection{The quantum affine algebra $U_q(\widehat{\g})$ and the $q$-characters}\label{sectqfa}

Drinfeld and Jimbo defined the quantum affine algebra $U_q(\widehat{\g})$ as a $q$-deformation of the universal enveloping algebra of the affine Kac-Moody algebra $\widehat{\g}$ \cite{Drinfeldx,jimbo1985q}.
The quantum affine algebra $U_q\hat{\mathfrak{g}}$ has a structure of a Hopf algebra. 
Thus, the category of its finite-dimensional representations $Rep(U_q\hat{\mathfrak{g}})$ is a monoidal category.\\
In \cite{chari1995guide,charipqfarep}, Chari and Pressley study this category of representations. 
They prove that the simple representations are characterized by $I$-tuples of polynomials called \textit{Drinfeld polynomials} with constant coefficient $1$. 
In particular, they study the \textit{fundamental representations} $V_{\omega_i}(a)$, where $i \in I$ and $a \in q^\mathbb{Z}$, are the simple representations associated to the $I$-tuple of polynomials $(P_j(u))_{j \in I}$ such that $P_j(u) = 1$ for $j \neq i$ and $P_i(u) = 1 - au$.

To study $Rep(U_q(\widehat{\g}))$, Frenkel and Reshetikhin introduced in \cite{frenkel2008qcharactersrepresentationsquantumaffine}, the $q$-character homomorphism $\chi_q$ which is an injective ring homomorphism from the Grothendieck ring of category of finite-dimensional representations of $U_q(\widehat{\g})$ to a polynomial ring in variables $Y_{i,a}$, where $i \in I$ and $a \in q^\mathbb{Z}$.
They prove that the image of $\chi_q$ is exactly the polynomial subring generated by the $q$-characters of the fundamental representations $V_{\omega_i}(a)$, where $i \in I$ and $a \in \C^*$.

We first recall the definition of the commutative algebra $\mathcal{Y}$ equipped with screening operators where the $q$-characters live. 
Then, we consider the canonical projection of the Heisenberg algebra onto its quotient by $\beta$, and we define the limit W-algebra as the image of $\WWqt$ under this projection. Finally, we show that this image is embedded in the kernel of the screening operators.

\subsubsection{The algebra $\mathcal{Y}$ and the screening operators}

Let $\mathcal{Y} = \mathbb{C}[Y_{i,a}^{\pm 1}]_{i \in I, a \in \C^*}$ be the polynomial ring in variables $Y_{i,a}$. 
We define $$A_{i,a}=Y_{i,aq^{-r_i}}Y_{i,aq^{r_i}}\prod_{j|I_{j,i}=1}Y_{j,a}^{-1}\prod_{j|I_{j,i}=2}Y_{j,aq}^{-1}Y_{j,aq^{-1}}^{-1}\prod_{j|I_{j,i}=3}Y_{j,aq^2}^{-1}Y_{j,a}^{-1}Y_{j,aq^{-2}}^{-1}\in\mathcal{Y}.$$
We define a structure of screening operators on this algebra following the construction in \cite{MR1646483}.

For each $i \in I$, consider the free $\mathcal{Y}$-module generated by symbols $S_{i,x}$ for $x \in\C^*$:
$$ \tilde{\mathcal{Y}}_i = \bigoplus_{x \in \C^*} \mathcal{Y} \cdot S_{i,x} $$
Let $\mathcal{Y}_i$ be the quotient of $\tilde{\mathcal{Y}}_i$ by the relations:
\begin{equation}
S_{i, x q^{2r_i}} = A_{i, x q^{r_i}} S_{i,x}, \label{eq:limit_relation}
\end{equation}

We define a linear operator $\tilde{S}_i: \mathcal{Y} \to \mathcal{Y}_i$ by the formula on the generators:
$$ \tilde{S}_i(Y_{j,a}) = \delta_{ij} Y_{i,a} S_{i,a} $$
and extended by the Leibniz rule $\tilde{S}_i(ab) = b \tilde{S}_i(a) + a \tilde{S}_i(b)$. Finally, let $S_i: \mathcal{Y} \to \mathcal{Y}_i$ be the composition of $\tilde{S}_i$ and the projection $\tilde{\mathcal{Y}}_i \to \mathcal{Y}_i$. We call $S_i$ the $i$-th screening operator.
Let $\chi_q$ be the $q$-character homomorphism. Frenkel and Reshetikhin proved the following theorem in \cite{frenkel2008qcharactersrepresentationsquantumaffine} :
\begin{theorem}[Corollary 2, \cite{frenkel2008qcharactersrepresentationsquantumaffine}]
    The $q$-character homomorphism $\chi_q$ gives an isomorphism of rings:
    $$ \chi_q: \text{Rep}(U_q(\hat{\mathfrak{g}})) \xrightarrow{\sim} \Z[T_{i,a}]_{i \in I, a \in \C^*}, $$
    where $T_{i,a}$ are the $q$-characters of the fundamental representations of $U_q(\hat{\mathfrak{g}})$
\end{theorem}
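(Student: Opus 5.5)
The plan is to split the statement into three claims. First, $\chi_q$ is an injective ring homomorphism from the Grothendieck ring $\text{Rep}(U_q(\hat{\mathfrak g}))$ into $\mathcal{Y}$. Second, its image is the subring generated by the $T_{i,a}=\chi_q([V_{\omega_i}(a)])$. Third, the $T_{i,a}$ are algebraically independent, so this subring is the polynomial ring $\Z[T_{i,a}]$.

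For the homomorphism property, I would use the definition of $\chi_q$ through the joint (generalized) eigenvalues of the Drinfeld--Cartan generators $h_{i,n}$ (equivalently, the $\ell$-weights) on a finite-dimensional module. The key input is the behaviour of the coproduct on these generators: modulo terms that are strictly triangular for the weight grading (Damiani, Beck), one has $\Delta(h_{i,n}) \equiv h_{i,n}\otimes 1+1\otimes h_{i,n}$. Passing to generating series of the corresponding group-like elements, the $\ell$-weights of $V\otimes W$ are exactly the products of $\ell$-weights of $V$ and of $W$, with multiplicities. Hence $\chi_q(V\otimes W)=\chi_q(V)\chi_q(W)$, and additivity on exact sequences is clear. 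I expect this coproduct estimate to be the main obstacle. I would quote it from Frenkel--Reshetikhin rather than reprove it, checking only that it yields multiplicativity in the variables $Y_{i,a}$.

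Next comes triangularity. Let $L(m)$ be the simple module with highest monomial $m$, a dominant monomial in the $Y_{i,a}$ encoding the Drinfeld polynomials. Then $\chi_q(L(m))=m+\sum_{m'<m} c_{m'}m'$, where $m'<m$ means $m'=m\prod A_{j,b}^{-1}$ with at least one factor, and $c_{m'}\in\N$. This follows from the highest $\ell$-weight theory of Chari--Pressley together with the $\mathfrak{sl}_2$-reduction argument of Frenkel--Reshetikhin and Frenkel--Mukhin. Since the classes $[L(m)]$ form a $\Z$-basis of $\text{Rep}$ and their images have pairwise distinct leading monomials for this order, $\chi_q$ is injective.

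For generation, I would argue by induction on $m$ for the order $<$. This is legitimate because the dominant monomials of $\chi_q\bigl(\bigotimes V_{\omega_i}(a)\bigr)$ form a finite set, being bounded by the weights of a finite-dimensional module. Write $m=\prod Y_{i_k,a_k}$. By multiplicativity and triangularity, $\prod_k T_{i_k,a_k}$ has leading term $m$, so $[\bigotimes_k V_{\omega_{i_k}}(a_k)]=[L(m)]+\sum_{m'<m}n_{m'}[L(m')]$ in $\text{Rep}$. By induction, $[L(m)]$ is an integer polynomial in the fundamental classes. Finally, for algebraic independence, a monomial $\prod T_{i_k,a_k}^{e_k}$ has leading monomial $\prod Y_{i_k,a_k}^{e_k}$, and distinct exponent vectors give distinct leading monomials in the free variables $Y_{i,a}$. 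Therefore a nontrivial integer relation among the $T_{i,a}$ cannot vanish: its maximal leading term survives. Combining the three claims shows that $\chi_q$ is a ring isomorphism onto $\Z[T_{i,a}]_{i\in I,a\in\C^*}$.
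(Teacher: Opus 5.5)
The paper gives no proof of this statement; it quotes Corollary 2 of Frenkel--Reshetikhin. Your outline is correct and is essentially their argument:
\begin{itemize}
\item $\chi_q$ is an injective ring homomorphism;
\item each $[L(m)]$ is obtained from a product of fundamental classes by removing the lower composition factors;
\item the $T_{i,a}$ are algebraically independent because their leading monomials $Y_{i,a}$ are distinct.
\end{itemize}
Two of your inputs differ from theirs.

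\textbf{Multiplicativity.} Frenkel--Reshetikhin use transfer matrices and the factorized universal $R$-matrix, so that $t_{V\otimes W}=t_Vt_W$ gives $\chi_q(V\otimes W)=\chi_q(V)\chi_q(W)$. Your route through the Damiani--Beck estimate on $\Delta(h_{i,n})$ also works. You must still take as known that $\ell$-weights have the rational form encoded by monomials in the $Y_{i,a}^{\pm1}$; that is what makes $\chi_q$ land in $\mathcal{Y}$ at all.

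\textbf{Triangularity.} You use the order $m'=m\prod A_{j,b}^{-1}$. That is a later and much deeper theorem of Frenkel--Mukhin, and you do not need it. Triangularity with respect to the $\mathfrak{g}$-weight comes directly from Chari--Pressley highest $\ell$-weight theory: $m$ has multiplicity one in $\chi_q(L(m))$, and every other monomial has strictly lower weight. This suffices for all three steps:
\begin{itemize}
\item a maximal-weight argument gives injectivity;
\item the same argument gives algebraic independence;
\item generation becomes an induction on $\mathrm{wt}(m)$ over the finite set of dominant weights below the starting one.
\end{itemize}

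\textbf{Well-foundedness.} The weight argument is also the right justification for your induction. You argued from the finiteness of the dominant monomials of one fixed tensor product. That does not by itself exclude infinite descent, since the recursion passes through different tensor products. What does exclude it is that each step strictly lowers a dominant weight.

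\textbf{Nonnegativity.} You conclude that $L(m)$ occurs exactly once in $\bigotimes_k V_{\omega_{i_k}}(a_k)$ and that every other composition factor has $m'<m$. This uses that $q$-character coefficients are nonnegative (they are dimensions of $\ell$-weight spaces), so nothing cancels; state it explicitly.
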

Moreover, E. Frenkel and E. Mukhin proved the following theorem :
\begin{theorem}[Theorem 5.1, \cite{FM1}]\label{imageqchar}
    The image of the $q$-character homomorphism is equal to the intersection of the kernels of the screening operators:
    $$ \C\otimes_\Z \text{Im}(\chi_q) = \bigcap_{i \in I} \text{Ker}(S_i). $$
\end{theorem}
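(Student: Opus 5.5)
The plan is to prove the two inclusions separately. Throughout I work over $\C$ and use that $S_i$ is a derivation from $\mathcal{Y}$ into the $\mathcal{Y}$-module $\mathcal{Y}_i$. I also use the Frenkel--Reshetikhin description of $\mathrm{Ker}(S_i)$, which is the $i$-th ``$\sld_2$-reduction''. It says that $\mathrm{Ker}(S_i)$ is the subring
$$\mathfrak{K}_i=\C[Y_{j,a}^{\pm1}]_{j\neq i,\,a\in\C^*}\otimes \C\big[\,Y_{i,a}(1+A_{i,aq^{r_i}}^{-1})\,\big]_{a\in\C^*}.$$
This is proved by expanding an element of $\mathcal{Y}$ in the variables $Y_{i,a}$ and comparing coefficients of the free generators $S_{i,x}$ modulo the relation \eqref{eq:limit_relation}. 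Within each $q^{2r_i\Z}$-string, this reduces to the known statement for $U_{q_i}(\widehat{\sld_2})$.

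\textbf{Inclusion $\subset$.} By the theorem of Frenkel--Reshetikhin recalled above, $\mathrm{Im}(\chi_q)$ is generated as a ring by the fundamental $q$-characters $\chi_q(V_{\om_i}(a))$. Since each $\mathrm{Ker}(S_i)$ is a subring ($S_i$ being a derivation), it suffices to treat a single $\chi_q(V)$. Fix $i$ and restrict $V$ to the subalgebra $U_{q_i}(\widehat{\sld_2})_i$. The joint eigenvalues of the Cartan currents decompose $\chi_q(V)$ as a sum of terms $\chi_q^{(i)}(W)\cdot m'$. Here $W$ runs over $U_{q_i}(\widehat{\sld_2})$-constituents, and $m'$ is a product of $Y_{j,a}^{\pm1}$ with $j\ne i$, rewritten through $A_{j,b}^{-1}$. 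Each $\chi_q^{(i)}(W)$ is a polynomial in the $\sld_2$ fundamental characters $Y_{i,a}(1+A_{i,aq^{r_i}}^{-1})$. Therefore $\chi_q(V)\in\mathfrak{K}_i=\mathrm{Ker}(S_i)$ for every $i$.

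\textbf{Inclusion $\supset$.} Let $\chi\in\bigcap_i\mathrm{Ker}(S_i)$ be nonzero. The key lemma is that \emph{$\chi$ contains a dominant monomial}, and more precisely that every monomial of $\chi$ maximal for the partial order $m\le m'$ iff $m'm^{-1}$ is a monomial in the $A_{j,b}$ is dominant. To see this, suppose $m$ is maximal but $d_{i,a}(m)<0$ for some $i,a$. Write $\chi$ in $\mathfrak{K}_i$ and look at its $i$-part. The element $m$ must then occur inside a product of factors $Y_{i,b}(1+A^{-1}_{i,bq^{r_i}})$ and their $Y_{j}$-cofactors. Every monomial there with a negative $Y_i$-power lies strictly below a monomial of the same element, namely the one obtained by dropping the $A^{-1}_i$ factors. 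This contradicts maximality.

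Granting the lemma, the argument is an induction. Take a maximal dominant monomial $m=\prod Y_{i,a}^{u_{i,a}}$ of $\chi$ with coefficient $c$. Replace $\chi$ by $\chi-c\prod\chi_q(V_{\om_i}(a))^{u_{i,a}}$. This still lies in the intersection of kernels by the first inclusion, and since the fundamental $q$-characters have highest monomial $Y_{i,a}$ with lower terms in $Y_{i,a}A^{-1}$, the monomial $m$ disappears and only strictly lower monomials are added.

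The step I expect to be the main obstacle is termination of this induction. I would handle it with a weight-grading finiteness argument. Each step removes a dominant monomial and adds only lower ones. The set of dominant monomials below a fixed finite set, with bounded weight, is finite: the weights of dominant monomials below $m$ are dominant weights $\le$ the weight of $m$, and the spectral parameters are confined to finitely many $q^{\Z}$-cosets with bounded shifts. So after finitely many subtractions the remainder has no dominant monomial, and by the lemma it is $0$. Hence $\chi\in\C\otimes_\Z\mathrm{Im}(\chi_q)$.
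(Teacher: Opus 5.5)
Your strategy is the standard Frenkel--Reshetikhin/Frenkel--Mukhin one; the paper gives no proof of this statement and only cites \cite{FM1}. However, the key lemma does not go through as you argue it.

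You derive a contradiction with maximality from the fact that the monomial obtained from $m$ by dropping the $A_i^{-1}$-factors is larger than $m$. But that monomial is only a monomial of one summand $P_p$ of an expansion $\chi=\sum_p\lambda_pP_p$ in $\mathfrak{K}_i$. It can be cancelled by lower terms of other summands, in which case it is not a monomial of $\chi$ and the maximality of $m$ says nothing about it. Such cancellations do occur, because products of your generators have several $i$-dominant monomials. For $\widehat{\mathfrak{sl}}_2$, the product $Y_{1,a}(1+A_{1,aq}^{-1})\,Y_{1,aq^2}(1+A_{1,aq^3}^{-1})=(Y_{1,a}+Y_{1,aq^2}^{-1})(Y_{1,aq^2}+Y_{1,aq^4}^{-1})$ contains the monomial $1$, which is the leading monomial of the empty product.

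The repair is short.
\begin{itemize}
\item Take the products $P_p=M_p\prod_b\bigl(Y_{i,b}(1+A_{i,bq^{r_i}}^{-1})\bigr)^{k_{p,b}}$ pairwise distinct, with $\lambda_p\neq0$.
\item Their leading monomials $\mu_p=M_p\prod_bY_{i,b}^{k_{p,b}}$ are pairwise distinct and $i$-dominant. Each occurs in $P_p$ with coefficient $1$, and every other monomial of $P_p$ is $<\mu_p$.
\item Take $\mu_{p_0}$ maximal among the $\mu_p$. It occurs in $\chi$ with coefficient $\lambda_{p_0}\neq0$: any other occurrence would be a non-leading monomial of some $P_p$, forcing $\mu_{p_0}<\mu_p$.
\item Since $m$ occurs in some $P_p$ with $\lambda_p\neq0$, you get $m\le\mu_p\le\mu_{p_0}$ for such a maximal $\mu_{p_0}$. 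Maximality of $m$ in $\chi$ then gives $m=\mu_{p_0}$, which is $i$-dominant.
\end{itemize}
Alternatively, expand $\chi$ in a basis of $\mathfrak{K}_i$ whose elements each have a unique $i$-dominant monomial (coming from simple $U_{q_i}(\widehat{\mathfrak{sl}}_2)$-modules). Then the coefficients of $i$-dominant monomials of $\chi$ are the expansion coefficients and cannot cancel.

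The finiteness used for termination is also only asserted. The weight bound controls the number $N$ of factors in $m'=m\prod_{k=1}^{N}A_{j_k,b_k}^{-1}$, but not the parameters $b_k$. To bound them, group the factors into connected clusters, linking two factors when they involve a common variable $Y_{j,c}$. Suppose some cluster has none of its variables occurring in $m$. By dominance of $m'$, that cluster would itself be a dominant monomial of weight $-\sum_k\alpha_{j_k}\neq0$. This is impossible, since dominant weights are nonnegative combinations of simple roots. Hence every $b_k=aq^{s}$ with $a$ a spectral parameter of $m$ and $|s|\le cN$, for a constant $c$ depending only on $\mathfrak g$. So only finitely many dominant monomials lie below $m$.

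Finally, make the descent explicit. Once a monomial $m$ that is maximal in the current $\chi$ (hence dominant) is removed, no monomial $\ge m$ is ever created again. So each element of this finite set is removed at most once. With these points filled in, your argument is a complete proof.
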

This gives a generators-description of the intersection of the kernels of the screening operators. The generators are the $q$-characters of the fundamental representations $T_{i,a}$.

\subsubsection{The limit of $\WWqt$}

The algebra $\Hqt(\g)$ is over the ring $K=\C[[h,\beta]]$. This allows us to rigorously define the limit as $\beta \to 0$ by quotienting the coefficient ring by the non-trivial ideal generated by $\beta$. 
In contrast, $S_i^-$ involves negative powers of $\beta$, which makes its limit singular in this framework.
This asymmetry, however, does not impede our analysis: since the elements considered in the limit are known to commute with $S_i^+$, this commutativity provides sufficient conditions to show that the limit $\mathbf{W}_{q,1}$ is contained in the intersection of the kernels of the screening operators defined just above.\\

Let $\mathbf{p}: \mathcal{H}_{q,t}(\mathfrak{g}) \to \mathcal{H}_{q,t}(\mathfrak{g}) / \beta \mathcal{H}_{q,t}(\mathfrak{g})$ 
be the canonical projection onto the quotient of the Heisenberg algebra by the ideal generated by $\beta$.
Since the commutation relations in $\mathcal{H}_{q,t}(\mathfrak{g})$ contain the factor 
$(t^n - t^{-n}) = (e^{nh\beta} - e^{-nh\beta})$, which vanishes at $\beta=0$, the quotient algebra 
$\mathcal{H}_{q,t}(\mathfrak{g}) / \beta \mathcal{H}_{q,t}(\mathfrak{g})$ is commutative.
This quotient is isomorphic to the polynomial ring $\mathcal{Y}$ via the identification $Y_i(z)$ : $Y_i(za)\mapsto Y_{i,a}$.

We define the limit $\overline{\mathbf{W}}_{q,1}(\mathfrak{g})$ as the image of the vector subspace $\mathbf{W}_{q,t}(\mathfrak{g})$ 
under the projection $\mathbf{p}$:
$$ \overline{\mathbf{W}}_{q,1}(\mathfrak{g}) := \mathbf{p}\left(\WWqt\right) \hookrightarrow \mathcal{Y}. $$

\begin{prop}\label{limt1}
The limit $\overline{\mathbf{W}}_{q,1}(\mathfrak{g})$ is embedded in the intersection of the kernels of the operators $S_i$:
$$ \overline{\mathbf{W}}_{q,1}(\mathfrak{g}) \hookrightarrow \bigcap_{i \in I} \text{Ker}(S_i). $$
\end{prop}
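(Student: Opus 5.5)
The plan is to show that the $\beta\to0$ reduction of the commutation condition with $S_i^+$ is exactly the condition defining $\operatorname{Ker}(S_i)$ in $\mathcal{Y}$. Let $\Phi(z)=\sum_p\lambda_p M_p(z)\in\WWqt$ with $\lambda_p\in K$ and $M_p$ distinct monomials in $\mathbf{M}$. Then $\mathbf{p}(\Phi)=\sum_p\overline{\lambda_p}\,\overline{M_p}\in\mathcal{Y}$, where $\overline{\lambda_p}=\lambda_p \bmod \beta$ and $\overline{M_p}$ is the image of $M_p$ under $Y_i(za)\mapsto Y_{i,a}$. We only use $[\Phi(z),S_i^+]=0$, since $S_i^-$ is singular at $\beta=0$. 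We compute this commutator monomial by monomial, using the formula for $[:M(z):,:S_j^+(w):]$ from Section~\ref{computation}. For each $M_p$, this formula is the difference of the two expansions of a rational function in $w/z$, multiplied by $:M_p(z)S_i^+(w):$. It is therefore a finite sum of terms $\delta\!\left(\frac{w}{zc}\right)$ times derivatives of $\delta$, with coefficients given by partial fractions.

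First, at $t=1$ each factor $\frac{1-t^{-\varepsilon}x}{1-t^{\varepsilon}x}$ has a pole at $x=t^{-\varepsilon}$, with residue proportional to $(t^{\varepsilon}-t^{-\varepsilon})$. We therefore divide by $(t-t^{-1})$, which is $2h\beta$ times a unit. To first order in $\beta$, the commutator of $M_p$ with $S_i^+(w)$ becomes
$$\sum_{a} d_{i,a}(M_p)\,\delta\!\left(\tfrac{w}{za}\right):M_p(z)S_i^+(w):$$
times a common unit. This is only possible after we check that the higher-order poles coming from repeated factors do not contribute at leading order. Second, we use the difference equation \eqref{scr1} at $t=1$: $:S_i^+(zq^{-r_i}):$ equals $q^{2r_i}:A_i(z)S_i^+(zq^{r_i}):$. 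This mirrors the relation \eqref{eq:limit_relation} defining $\mathcal{Y}_i$, with $S_{i,x}$ corresponding to $\delta(w/zx)S_i^+(w)$, up to renormalizing $S_{i,x}$ by the scalar $q^{-\,\cdot}$ so the constants match. Third, we define a linear map
$$\theta:\mathcal{Y}_i\to(\text{coefficients of }\delta\text{-terms modulo }\beta^2\text{ after dividing by }\beta),\qquad Y\,S_{i,x}\mapsto \Res_w\text{-class of } \delta(w/zx):\!Y S_i^+(w)\!:.$$
This map is well defined precisely because of the second step. Under $\theta$, the leading coefficient of $[\Phi(z),S_i^+]/\beta$ is $\theta(S_i(\mathbf{p}(\Phi)))$, by the Leibniz-type structure of the first step: $S_i(\prod Y_{j,a}^{\varepsilon})=\sum_a\varepsilon\,(\prod\cdots)S_{i,a}$.

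Finally, if $\theta$ is injective, the vanishing of the commutator gives $S_i(\mathbf{p}(\Phi))=0$ for every $i$, which is the claim. Injectivity is the main obstacle. To prove it, I would use the faithfulness of the Fock representation (Proposition~\ref{fockrep} and Proposition~\ref{annidegree}) together with the algebraic independence of the normally ordered monomials (the isomorphism \eqref{isom}). The idea is to project with $\Pi$ onto creation modes, as in the proof of that isomorphism. The fields $:Y S_i^+(zx):$ for distinct classes in $\mathcal{Y}_i$ should then be linearly independent, since $s_i^+[m]$ and the $y_j[m]$ generate a free commutative algebra in negative modes. The delicate point is that the identifications in $\mathcal{Y}_i$ must match exactly the linear relations among these fields, namely the $q^{2r_i\Z}$-chains coming from \eqref{scr1}. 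I would handle this by fixing, for each class $x\,q^{2r_i\Z}$, a representative $x_0$. We then rewrite every term via \eqref{scr1} in terms of $S_i^+(zx_0)$ and compare the resulting $\mathcal{Y}$-coefficients, again using the Vandermonde-type arguments from the proof of \eqref{isom}.
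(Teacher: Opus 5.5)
Your strategy is the paper's own. You reduce $[\Phi(z),S_i^+]$ to first order in $\beta$; the paper packages this as the bracket $-\mathbf{p}(\frac{1}{2\beta}[\cdot,\cdot])$. You then read off a Leibniz-type expression and match \eqref{scr1} at $t=1$ with \eqref{eq:limit_relation}.

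Your first step is sound. Write $u=za/w$ and let $\bar u$ be $u$ at $t=1$. The $\beta$-derivative at $\beta=0$ of the logarithm of each factor $\frac{1-t^{-\varepsilon}u}{1-t^{\varepsilon}u}$ is $2h\varepsilon\,\bar u/(1-\bar u)$. So only simple poles survive, and $t$-shifted variables merge exactly as they do under $\mathbf{p}$.

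\textbf{The second step fails.} Substituting $z\mapsto zxq^{-r_i}$ in \eqref{scr1} at $t=1$ gives $S_i^+(zxq^{-2r_i})=q^{2r_i}\,{:}A_i(zxq^{-r_i})S_i^+(zx){:}$. With $S_{i,x}\leftrightarrow zx\,S_i^+(zx)$ you obtain $S_{i,xq^{-2r_i}}=A_{i,xq^{-r_i}}S_{i,x}$, which is exactly what the paper computes for $\overline{S}_{i,x}$. But \eqref{eq:limit_relation}, with $x$ replaced by $xq^{-2r_i}$, says $S_{i,xq^{-2r_i}}=A_{i,xq^{-r_i}}^{-1}S_{i,x}$.

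The discrepancy is $A$ versus $A^{-1}$, and no scalar renormalization absorbs it. So $\theta$ is not well defined on $\mathcal{Y}_i$ as written, and the statement is in fact false for that $\mathcal{Y}_i$:
\begin{itemize}
\item For $\mathfrak{sl}_2$, the algorithm's output $Y(z)+Y(zq^{-2}t^2)^{-1}$ commutes with $S_1^{\pm}$. This takes two residues and one use of \eqref{scr1}, resp.\ \eqref{scr2}.
\item Its image is $Y_{1,1}+Y_{1,q^{-2}}^{-1}$.
\item With \eqref{eq:limit_relation}, $S_1(Y_{1,1}+Y_{1,q^{-2}}^{-1})=(Y_{1,1}-Y_{1,1}^{-1}Y_{1,q^{-2}}^{-2})S_{1,1}\neq0$.
\end{itemize}
The paper's proof makes the same identification (``satisfies the same relation as $\tilde S_i$''). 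So this is a $q\leftrightarrow q^{-1}$ convention error in the setup, not a flaw specific to your plan.

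The repair is to define $\mathcal{Y}_i$ by $S_{i,xq^{-2r_i}}=A_{i,xq^{-r_i}}S_{i,x}$. This is the convention matching the algorithm's $Y_{i,a}\mapsto Y_{i,aq^{-2r_i}}^{-1}$, and Frenkel--Mukhin's theorem holds verbatim there with $q$ replaced by $q^{-1}$. With this choice $\theta$ is well defined: the factor $zx$ from the residue absorbs the scalar $q^{2r_i}$.

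\textbf{Injectivity of $\theta$.} You are right that it is needed; the paper's ``we can identify $\overline{S}_i$ with $\tilde{S}_i$'' uses it without proof. Your stated reason is not accurate, though. Since $s_i^+[-m]=a_i[-m]/(q^{-mr_i}-q^{mr_i})$ lies in the span of the $y_j[-m]$, there is no additional freeness; the content is arithmetic. One way to complete it:
\begin{itemize}
\item Modulo $\beta$ everything commutes, and the shift and zero-mode factors are common to all terms.
\item Choose one $x_0$ per $q^{2r_i\Z}$-coset; $\mathcal{Y}_i$ is free over $\mathcal{Y}$ on the corresponding $S_{i,x_0}$. After applying $\Pi$, the element $zx_0\,\overline{Y}\,\overline{S_i^+(zx_0)}$ is $z$ times a scalar times the exponential of a linear form in the $y_j[-m]$.
\item Distinct such exponentials are linearly independent, because the derivations $\partial/\partial y_j[-m]$ separate them. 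So it suffices that distinct pairs $(Y,x_0)$ give distinct linear forms.
\item Equate the $y_i[-m]$-coefficients $\sum_b n_{i,b}b^m+\frac{(q^{mr_i}+q^{-mr_i})x_0^m}{q^{-mr_i}-q^{mr_i}}$ for two pairs, and multiply by $q^{-mr_i}-q^{mr_i}$.
\item Finite Vandermonde then gives, for a finitely supported $\Delta$ on $\C^*$, that $\Delta(cq^{r_i})-\Delta(cq^{-r_i})$ is the function equal to $1$ at $x_0'q^{\pm r_i}$, $-1$ at $x_0q^{\pm r_i}$, and $0$ elsewhere.
\item Summing over the orbit $x_0q^{r_i+2r_i\Z}$ gives $0=-2$ when $x_0,x_0'$ lie in different cosets. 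When $x_0=x_0'$, Vandermonde directly gives $Y=Y'$.
\end{itemize}
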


\begin{proof}
The proof is very similar to the reasoning in Paragraph 8.4 in \cite{frenkel2008qcharactersrepresentationsquantumaffine}. 
Consider the deformed screening currents $S_i^+(z)$ defined in Section \ref{sectionscreening}. 
We recall that $S_i^+:=S_{i,-1}^+$ is defined as the $(-1)^{th}$ Fourier coefficient of the screening current $S_i^+(z)=\sum_{n\in\Z}S_{i,n}^+z^{-n}$.
In the limit $\beta \to 0$, the family $S_i^+$ commutes with $\Hqt(\g)$. Hence, for all $x\in\Hqt(\g)$, we have
$$ \forall i\in I,\qquad [x,S_i^+]\in \beta\Hqt(\g). $$
Thus, we define a the following Poisson bracket on $\Hqt(\g)/\beta\Hqt(\g)$ by the formula:
$$\{a,b\} = -\mathbf{p}\left(\frac{1}{2\beta}[a',b']\right),$$
where $a'$ and $b'$ are any lifts of $a$ and $b$ in $\Hqt(\g)$. This Poisson bracket is well-defined since $\Hqt(\g)$ is 
free of negative powers of $\beta$. This verifies the antisymmetry, the Jacobi identity, and the Leibniz rule for the Poisson bracket.\\

Moreover, if $a\in \C^*$, $i,j\in I$, and $S_i^+(w)$ the screening current defined in Section \ref{sectionscreening}, we have :
\begin{align*}
    [Y_i(za), S_j^+(w)]&=\delta_{ij}(t^{-2}-1)\delta\left(\frac{w}{zat}\right):Y_i(za)S_j^+(w):,\\[2mm]
    \Res_w[Y_i(za), S_j^+(w)]&=\Res_w\left[\delta_{ij}(t^{-2}-1)\delta\left(\frac{w}{zat}\right)Y_i(za)S_j^+(zat)\right],\\[2mm]
    [Y_i(za), S_j^+]&=\delta_{ij}(t^{-2}-1)Y_i(za)S_j^+(zat)zat.
\end{align*}
Hence, in the quotient, we get :
$$\{\overline{Y}_{i,a},S_j^+\}=\delta_{ij}h\overline{Y}_{i,a}\overline{S}_{i,a}.$$
with for all $x\in \C^*$, $i\in I$,
$$ \overline{Y}_{i,x}:= \mathbf{p}(Y_i(zx)). $$
$$ \overline{A}_{i,x}:= \mathbf{p}(A_i(zx)). $$
$$ \overline{S}_{i,x}:= \mathbf{p}(S_i^+(zx)zx). $$
Clearly, $$\Hqt(\g)/\beta\Hqt(\g) = K[\overline{Y}_{i,x}^{\pm 1}]_{i\in I, x\in \C^*}=:\overline{\mathcal{Y}},$$ as a commutative algebra.
We define a linear operator $\overline{S}_i:\overline{\mathcal{Y}}\to\bigoplus_{i\in I,x\in \C^*}\overline{\mathcal{Y}}\cdot \overline{S}_{i,x}$ by the formula on the generators :
$$\overline{S}_i \cdot \overline{Y}_{j,x} = \delta_{ij}h\overline{Y}_{j,x}\overline{S}_{i,x}.$$

Moreover,
\begin{align*}
    \overline{S}_{i,x q^{-2r_i}}&=\mathbf{p}\left(S_i^+(zx q^{-2r_i})zx q^{-2r_i}\right),\\
    &=\mathbf{p}\left(t^{-2}q^{2r_i}A_i(zx q^{-r_i}) S_i^+(zx)zxq^{-2r_i}\right),\\
    &=\mathbf{p}\left(A_i(zx q^{-r_i}) S_i^+(zx)zx\right),\\
    &=\overline{A}_{i,x q^{-r_i}} \overline{S}_{i,x}.
\end{align*}
Thus, the operator $\overline{S}_i$ satisfies the same relation as the operator $\tilde{S}_i$ defined above (up to the constant factor $h$). Hence, we can identify $\overline{S}_i$ with $\tilde{S}_i$ and $\overline{\mathcal{Y}}$ with $\mathcal{Y}$, and we get :

$$ \C\otimes_\Z \text{Im}(\chi_q) = \bigcap_{i \in I} \text{Ker}(\tilde{S}_i). $$
However, for all $\Phi(z) \in \WWqt$, we have $\Res_w[\Phi(z), S_i^+(w)] = 0$. 
Under the projection $\mathbf{p}$, the commutator reduces to the action of the derivation $S_i$ on the image $\mathbf{p}(\Phi(z))$.
Then, $\mathbf{p}(\Phi(z))$ is in the kernel of $S_i$ for all $i \in I$. This proves the inclusion.
\end{proof}

\begin{rem}
    It is not clear for the moment whether the classical limit $t\to1$ is equal to the image of the $q$-character homomorphism. However, it is possible to prove that this limit is a commutative algebra with respect to the usual product, and Theorem \ref{theoremefinal} proves that it is the case for a Lie algebra $\g$ of type $A_\el,B_\el,C_\el$, or $G_2$.
\end{rem}

It is proved in \cite{FM1} (Theorem 5.1) and \cite{FHR} (Theorem 3.1) that this intersection is actually isomorphic to the Grothendieck ring of the category of finite-dimensional representations of $U_q(\hat{\mathfrak{g}})$. Under this isomorphism, the unique fields with only one dominant monomial equal to $Y_{i,a}$ correspond to the $q$-characters of the fundamental representations.
Again, in order to know whether the inclusion above is an isomorphism, we need to study the existence of elements in $\mathbf{W}_{q,t}(\mathfrak{g})$ corresponding to these $q$-characters. This is what we do in the next sections.

\subsection{Explicit formulas for fundamental fields} 
To simplify the notations, we introduce the definition of the $k$-th canonical projection $\Pi_k$ for all $1\le k\le \el$ as follows :
Let $\varphi:\HHqt(\g)\overset{\sim}{\longrightarrow}K[Y_{i,a}^{\pm1}]_{i\in I,a\in\C^* q^\Z t^\Z}$ be the isomorphism defined in Proposition \ref{isom}.
Let $$\varpi_k:K[Y_{i,a}^{\pm1}]_{i\in I,a\in\C^* q^\Z t^\Z}\longrightarrow K[Y_{k,a}^{\pm1}]_{a\in\C^* q^\Z t^\Z}\simeq K[Y_{i,a}^{\pm1}]_{i\in I,a\in\C^* q^\Z t^\Z}/(Y_{j,a}-1)_{j\neq k, a\in\C^* q^\Z t^\Z}$$
be the canonical projection. We define
$$\Pi_k:\varphi^{-1}\circ \varpi_k\circ\varphi:\HHqt(\g)\longrightarrow\HHqt(\g).$$

In all this section, we will apply the algorithm described in the previous section to the monomial $Y_i(z)$ for some $i\in I$. Here is the second main theorem of this article:

\begin{theorem}\label{theoremefinal}
    Conjecture \ref{conj1} holds in types $A_\ell$ ( for all $i\in I$), $B_\ell$ ( for all $i\in I$), $C_\ell$ ( for all $i\in I$), $D_\ell$ ( for $i=1,\ell-1,\ell$), $E_6$ ( for $i=1,5$), $E_7$ ( for $i=6$), $F_4$ ( for $i=1,4$) and $G_2$ ( for $i=1,2$). 
\end{theorem}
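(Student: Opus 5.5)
The plan is to apply Theorem \ref{theoremalgowork} to the dominant monomial $m=Y_i(z)$ in each listed case, following the strategy of Theorem 6.1 in \cite{fjm1}. The monomial $Y_i(z)$ is trivially generic, regular and dominant. What must be shown is that the algorithm \emph{works} from $Y_i(z)$: it never produces a non-generic or non-regular monomial, and it terminates.

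To do this, I will first write down an explicit candidate set $\mathcal{S}_i$ of monomials in each case. For classical types this set is obtained from the tableau-type description of $\chi_q(V_{\omega_i})$ (boxes $\boxed{k}$ in types $A_\ell,B_\ell,C_\ell$, and the analogous column description for the spin nodes of $D_\ell$). Each occurrence of $q^{n}$ in a spectral parameter is replaced by $q^{n}t^{m}$, where $m$ is the number of transformations $A_j^{-1}$ used so that every factor $A_j(zq^{a}t^{b})^{-1}$ carries the $t$-power dictated by the algorithm. For example, in type $A_\ell$ one gets the boxes
$$\boxed{k}_{a}=Y_{k-1}(zaq^{-k}t^{k})^{-1}Y_k(zaq^{-k+1}t^{k-1}),$$
and $\mathcal{S}_i$ consists of the $t$-deformed column tableaux. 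For the exceptional nodes ($E_6$, $i=1,5$; $E_7$, $i=6$; $F_4$, $i=1,4$; $G_2$) these are minuscule or small cases, so I will simply run the algorithm by hand or by computer and list the finitely many monomials.

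Next, I will prove by induction on height that $\mathbf{A}(Y_i(z))=\mathcal{S}_i$. Three checks are needed:
\begin{enumerate}
\item Every element of $\mathcal{S}_i$ is generic and regular.
\item For every $M\in\mathcal{S}_i$ and every admissible $Y_j(za)$ in $M$, the monomial $MA_j(zaq^{-r_j}t)^{-1}$ lies again in $\mathcal{S}_i$ (closure).
\item Every element of $\mathcal{S}_i$ is reached from $Y_i(z)$ (reachability).
\end{enumerate}
For reachability it suffices to exhibit, for each tableau, a lowering path, as in the usual proof that tableaux give $q$-characters. Termination then follows from finiteness of $\mathcal{S}_i$, and Theorem \ref{theoremalgowork} gives a field $T_i(z)\in\WWqt$ whose monomials are exactly $\mathcal{S}_i$, each of the form $Y_i(z)\prod A_{i_k}(zq^{a_k}t^{b_k})^{-1}$, with nonzero coefficients.

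Finally, I will compare weights and multiplicities. Proposition \ref{coeffwelldefprop} shows that each step ratio lies in $\Q_{>0}+\beta K$, provided the exceptional ratios $t^{-1},q^{2r_i}t^{-1}$ do not occur. So $c(q,1)$ is a positive rational, and I expect to verify on the explicit families that it is $1$ (thin case) or small integers at the few non-thin points, such as the zero-weight part in type $B_\ell$, $i=\ell$ or $C_\ell$. By Proposition \ref{limt1}, $\mathbf{p}(T_i(z))$ lies in $\bigcap_i\mathrm{Ker}(S_i)$. By Theorem \ref{imageqchar} it is therefore a combination of $q$-characters, and since it has the unique dominant monomial $Y_{i,1}$ with coefficient $1$, it equals $\chi_q(V_{\omega_i})$ by the Frenkel--Mukhin uniqueness. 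This gives the equality of weight multisets, and forces $c(q,1)$ to be a positive integer.

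The main obstacle is the closure/regularity check in types $B_\ell$ and $C_\ell$ for arbitrary $i$. There, monomials of the form $Y_\ell(za)Y_\ell(zaq^{-2r_\ell}t^2)$-type pairs appear, and regularity demands the presence of the two companion factors. If these pairs occurred without companions the algorithm would fail, as it does for $D_4$, $i=2$ in Remark \ref{faild4}. I would first handle $i=1$ and then treat general $i$ by viewing the columns as ordered products of $i$ boxes with strictly increasing conditions adapted to $t$-shifts. I would check the regularity conditions pairwise on adjacent boxes using $\Pi_k$ to reduce to rank-one data, which is where I expect most of the casework.
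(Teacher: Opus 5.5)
Your proposal is correct and follows essentially the paper's own route: an explicit tableau-type (or sign-sequence) candidate set is shown by induction on height to coincide with $\mathbf{A}(Y_i(z))$, the exceptional nodes are done by direct computation, and then the $t\to1$ argument runs through Proposition \ref{limt1}, Theorem \ref{imageqchar} and uniqueness of the $q$-character whose sole dominant monomial is $Y_{i,1}$. One side remark is off but harmless: all the listed fundamental modules are thin (Remark \ref{thinh}), weight multiplicities larger than one (e.g.\ zero weights in $C_\ell$) come from distinct monomials rather than from coefficients, and $B_\ell$, $i=\ell$ is the minuscule spin module (treated via the $b_\sigma$ rather than boxes, and without a zero weight), so there are no ``non-thin points'' to handle and your final uniqueness step fixes the coefficients anyway.
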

It is already known in types $A_\el$ and $G_2$. We give a proof in all other cases.
\begin{proof}
    In the next sections, we prove that there exists fundamental fields $T_i(z)\in\WWqt$ whose unique dominant monomial is $Y_i(z)$, appearing with coefficient $1$ for all the cases considered in this theorem.
    Moreover, each of these fields verify that their limit $t\to1$ also has a unique dominant monomial equal to $Y_i(z)$, appearing with coefficient $1$. 
    However, Theorem \ref{imageqchar} and Proposition \ref{limt1} imply that the limit $t\to1$ of these fields are $\C$-linear combination of $q$-characters of finite-dimensional representations of $U_q(\hat{\mathfrak{g}})$.
    Since the $q$-characters of the fundamental representations are the only ones with a unique dominant monomial equal to $Y_i(z)$, we get that the limit $t\to1$ of $T_i(z)$ is the $q$-character of the $i$-th fundamental representation. 
    This proves that the weights of the fundamental fields $T_i(z)$ are the same as the weights of the $i$-th fundamental representation, and that the coefficients specialize at $t\to1$ to positive integers independent of $q$. 
    Hence the theorem.
\end{proof}

\begin{rem}\label{thinh}
    These cases are exactly the cases where the fundamental representations are thin (that is, all their $\el$-weights spaces are of dimension $1$). 
    We think their may be a link between the multiplicity of the monomials computed in the Frenkel-Mukhin algorithm and the fact that our algorithm works or not.
\end{rem}

\begin{notation}
    To simplify the notations in the next sections, for all $i\in I$, we define the \textit{height} of a tuple $\mathbf{j}=(j_1,\dots,j_i)\in \N^i$ as the integer $$ht_i(\mathbf{j}):=\sum_{\al=1}^i (j_\al-\al).$$
\end{notation}

\subsubsection{Type $A_\el$ :}
In type $A_\el$ the algorithm gives all the fundamental fields for $1\leq i\leq n$. Indeed, it is clear that the fundamental fields $T_i(z)$ are the $q$-characters of the $i$-th fundamental representation, with replacing $q$ by $(q^{-1}t)$. The $q$-characters in type $A_n$ are computed for example in \cite{frenkel1996quantum}. We get :
$$\La_i(z) := Y_i(zq^{-i+1}t^{i+1}) Y_{i-1}(zq^{-i}t^i)^{-1}, \quad \quad
i=1,\ldots,\el+1.$$

The fundamental fields of $\mathbf{W}_{q,t}(A_\el)$ are
$$\s_i(z) = \sum_{1\leq j_1 < \ldots < j_i\leq \el+1} \La_{j_1}(zq^{-(i-1)}t^{i-1})
\La_{j_2}(zq^{-(i-3)}t^{i-3}) \ldots  \La_{j_i}(zq^{i-1}t^{-(i-1)}),$$
with $\quad i=1,\ldots,\el+1.$ We have: $\s_{\el+1}(z) = 1$.
\begin{prop}{\cite{awata1996quantum,feigin1996quantum}}
    For all $1\le i\le\el$, the field $T_i(z)$ belongs to $\WWqt$.
\end{prop}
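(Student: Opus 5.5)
Since the only tool available is Theorem \ref{theoremalgowork}, I will run the algorithm from the dominant monomial $m=Y_i(z)$ and show that the set of monomials it produces is exactly
\[
\mathcal{S}_i=\Bigl\{\La_{j_1}(zq^{-(i-1)}t^{i-1})\La_{j_2}(zq^{-(i-3)}t^{i-3})\cdots\La_{j_i}(zq^{i-1}t^{-(i-1)})\Bigm| 1\le j_1<\dots<j_i\le \el+1\Bigr\},
\]
following the approach of Theorem 6.1 in \cite{fjm1} described above.

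\textbf{Step 1: rewrite the monomials.} Set $\La_k(w)=Y_k(wq^{-k+1}t^{k-1})Y_{k-1}(wq^{-k}t^{k})^{-1}$, with $Y_0=Y_{\el+1}=1$. This normalization is the one that makes the tuple $(1,\dots,i)$ give exactly $Y_i(z)$, since the factors telescope. The key identity to check is
\[
\La_k(w)A_k(wq^{-k}t^{k})^{-1}=\La_{k+1}(w),
\]
which comes from $A_k(c)=Y_k(cq^{-1}t)Y_k(cqt^{-1})Y_{k-1}(c)^{-1}Y_{k+1}(c)^{-1}$. After the substitutions $w_\alpha=zq^{-(i-2\alpha+1)}t^{i-2\alpha+1}$, each monomial indexed by $\mathbf{j}$ becomes an explicit product of $Y$'s with distinct spectral parameters. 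I will record which variables occur with exponent $+1$ and which with $-1$ in terms of $\mathbf{j}$. The expected pattern is the following: positive factors $Y_{j_\alpha}$ appear exactly when $j_{\alpha+1}\neq j_\alpha+1$, and negative ones cancel in blocks of consecutive indices. This gives genericity at once, since all parameters $zq^at^b$ are distinct along the line $a=-b$. Because $a=-b$ always, only powers of $(q^{-1}t)$ occur, so differences such as $q^{-2}$ or $t^{2}$ alone can never arise, and regularity holds trivially.

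\textbf{Step 2: admissible variables match elementary moves.} I will show that $Y_k(zc)$ is admissible in $m_{\mathbf{j}}$ if and only if $k=j_\alpha$ for some $\alpha$ with $j_{\alpha+1}>j_\alpha+1$. The admissibility conditions $d_{k,cq^{-2}}=d_{k,ct^2}=0$ are automatic here, again because every parameter lies on the line $a=-b$. I will also show that the corresponding transformation $A_k(zcq^{-1}t)^{-1}$ sends $m_{\mathbf{j}}$ to $m_{\mathbf{j}+e_\alpha}$; this follows from the identity in Step 1. Consequently $\mathcal{S}_i$ is closed under the algorithm and every element of $\mathcal{S}_i$ is reached, by induction on $ht_i(\mathbf{j})$. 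Hence $\mathbf{A}(Y_i(z))=\mathcal{S}_i$, the algorithm never fails, and it terminates because $\mathcal{S}_i$ is finite.

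\textbf{Step 3: conclude.} Theorem \ref{theoremalgowork} then gives $T_i(z)\in\WWqt$ with support $\mathcal{S}_i$, and $Y_i(z)$ is its unique dominant monomial. The explicit coefficients are not needed for membership. To match the stated formula with all coefficients equal to $1$, I will use \eqref{coefalg}: in this type-$A$, $q^{-1}t$-string situation, the only same-colour $Y_k$ neighbours lie at ratios $(q^{-1}t)^{\pm2n}$ off the resonant positions, and I expect the factors to cancel exactly, as in the $\sld_4$ example. Alternatively, this case can simply be quoted from \cite{awata1996quantum,feigin1996quantum}.

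The main obstacle is the bookkeeping in Step 2, namely showing that no extra admissible variable creates a monomial outside $\mathcal{S}_i$. Steps 1 and 2 suffice for the conjecture; the claim that every coefficient equals exactly $1$ in Step 3 is the riskier part, and I would fall back on the citation if the cancellation does not come out cleanly.
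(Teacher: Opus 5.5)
Your route differs from the paper's. The paper gives no argument in type $A_\el$: it simply quotes \cite{awata1996quantum,feigin1996quantum}. You instead prove the statement inside the paper's framework, by running the algorithm from $Y_i(z)$ and applying Theorem~\ref{theoremalgowork}, just as the paper does for $B_\el$ and $C_\el$ but with a much simpler case analysis. This gives a self-contained proof in the formal setting over $K$, whereas the cited results live in a slightly different setting (cf.\ Remark~\ref{gendiff}). It also only needs Lemma~\ref{algorithmreverse} and Theorem~\ref{theoremalgowork}(a) for rank-two subdiagrams of type $A_1\times A_1$ or $A_2$, which are exactly the cases written out in the paper. Several of your ingredients are correct as stated:
\begin{itemize}
\item Your normalization $\La_k(w)=Y_k(wq^{-k+1}t^{k-1})Y_{k-1}(wq^{-k}t^{k})^{-1}$ is the right one; the displayed $t^{i+1}$ is a misprint.
\item The identity $\La_k(w)A_k(wq^{-k}t^{k})^{-1}=\La_{k+1}(w)$ holds.
\item Your description of the admissible variables, and the induction on $ht_i(\mathbf j)$, are correct.
\end{itemize}

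Two justifications need repair, however.

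First, regularity does not follow from all parameters lying on the line $\{z(q^{-1}t)^n\}$. The shift $q^{-2}t^{2}=(q^{-1}t)^2$ lies on that line, so the second and third regularity conditions are not vacuous for that reason. For example, $Y_k(z)Y_k(zq^{-2}t^2)$ and $Y_k(z)Y_k(zq^{-2}t^2)^{-1}$ both live on the line and are not regular. Likewise, genericity does not come from ``distinct parameters'', since factors of different colours can share a parameter.

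Second, the coefficients do matter here. The statement concerns the specific field $T_i(z)$ with all coefficients equal to $1$, not just some field with that support. Your heuristic that same-colour factors at ratios $(q^{-1}t)^{\pm 2n}$ contribute trivially is not the mechanism: such factors in \eqref{coefalg} are not equal to $1$ in general.

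Both points follow from one observation about $m_{\mathbf j}$:
\begin{itemize}
\item A positive $Y_k$ can only come from $j_\alpha=k$, and a negative $Y_k$ only from $j_\beta=k+1$.
\item Each occurs at most once, because $\mathbf j$ is strictly increasing.
\item If both occur, then $\beta=\alpha+1$, and both sit at $zq^{-(i-2\alpha+k)}t^{i-2\alpha+k}$, so they cancel.
\end{itemize}
Hence every colour appears at most once in $m_{\mathbf j}$. This immediately gives genericity and regularity, and makes every positive factor admissible. It also makes the products in \eqref{coefalg} empty, so $\lambda_{m'}=\lambda_m$ and all coefficients equal $1$. With this repair, your argument proves the proposition exactly as stated, without falling back on the citation.
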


We do the type $C_\el$ before the type $B_\el$ because there are more intricated conditions on coefficients in type $C_\el$ than in type $B_\el$.

\subsubsection{Type $C_\el$ :} 
Let 
\[D=
\begin{pmatrix}
1 & 0 &  \cdots  & 0 &0\\
0 & 1 &  \cdots  & 0&0 \\
\vdots & \ddots & \ddots & \ddots &\vdots\\
0 & \cdots & 0 & 1&0 \\
0 & \cdots & 0 & 0&2 
\end{pmatrix}
\]
the diagonal matrix such that $DC$ is symmetric.

In type $C_\el$, the algorithm works for all the fundamental fields.
Let 
$$\La_i(z) := Y_i(zq^{-i+1}t^{i-1}) Y_{i-1}(zq^{-i}t^i)^{-1}, \quad \quad
i=1,\ldots,\el.$$
$$\La_{\bar i}(z) := Y_{i-1}(zq^{-2\el+i-2}t^{2\el-i}) Y_{i}(zq^{-2\el+i-3}t^{2\el-i+1})^{-1}, \quad \quad
i=1,\ldots,\el.$$

with $\bar i=2\el-i+1$. Then, the fundamental fields of $\mathbf{W}_{q,t}(C_\el)$ are
\begin{align}\label{qtcharacterBi}\s_i(z) = \sum_{\textbf{j}=(j_1,\dots,j_i)\in S} c_{\textbf{j}}(q,t)\La_{\textbf{j}},
\end{align}
where $$\La_{\textbf{j}}=\La_{j_1}(zq^{-(i-1)}t^{i-1})
\La_{j_2}(zq^{-(i-3)}t^{i-3}) \ldots  \La_{j_i}(zq^{i-1}t^{-(i-1)}),$$
with $ i=1,\ldots,\el$, the coefficients $c_\textbf{j}(q,t)\in K$ will be defined below, and $S$ is the set of $1\le j_1<\dots<j_i\le2\el $ such that if $j_\al=k$ and $j_\beta=2\el-k+1$ then $k\ne\el+\al-\beta+1$ for all $1\le k\le \el$.

\begin{prop}\label{fundamb}
    For all $1\le i\le \el$, there exists $c_{j_1,\dots,j_i}(q,t)\in K^S$ such that $T_i(z)$ belongs to $\WWqt$. 
\end{prop}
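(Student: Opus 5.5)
The approach is to follow the strategy announced at the start of this section, modelled on Theorem 6.1 of \cite{fjm1}. We show that the set of monomials $\{\La_{\textbf{j}}\}_{\textbf{j}\in S}$ is exactly the set $\mathbf{A}(Y_i(z))$ produced by the algorithm started at the dominant generic regular monomial $m=Y_i(z)$. We also show that the algorithm never fails along the way. Theorem \ref{theoremalgowork} then provides coefficients $c_{\textbf{j}}(q,t)\in K^\times$, with $c_{(1,\dots,i)}=1$, such that $\s_i(z)\in\WWqt$. The set $S$ is finite, so termination is automatic once we know that every created monomial lies in $\{\La_{\textbf{j}}\}_{\textbf{j}\in S}$.

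\textbf{Step 1.} For each $\textbf{j}\in S$ we write $\La_{\textbf{j}}$ explicitly as a product of $Y_k(zc)^{\pm1}$ and record, for each node $k$, its nonzero degrees $d_{k,c}$. The shifts $q^{-(i-1)+2(\al-1)}t^{(i-1)-2(\al-1)}$ of consecutive factors differ by $q^2t^{-2}$, and the indices are strictly increasing. Together these give the following.
\begin{enumerate}
\item[(i)] Factors of consecutive $\La_{j_\al}$, $\La_{j_{\al+1}}$ with $j_{\al+1}=j_\al+1$ cancel, as in type $A$.
\item[(ii)] All other occurrences have pairwise distinct spectral parameters, so $\La_{\textbf{j}}$ is generic.
\end{enumerate}
The excluded pairs in $S$, with $j_\al=k$, $j_\beta=\bar k$ and $k=\el+\al-\beta+1$, are exactly the configurations in which $\La_k$ and $\La_{\bar k}$ would produce $Y_{k-1}(zc)^2$ or a non-regular pattern. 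Ruling them out should make every $\La_{\textbf{j}}$ both generic and regular. We then check that $\La_{(1,\dots,i)}=Y_i(z)$ is the only dominant element.

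\textbf{Step 2.} We identify the admissible variables and the edges. In $\La_{\textbf{j}}$, a positive factor $Y_k(zc)$ arises from some $\La_{j_\al}$ with $j_\al=k+1$, or $j_\al=\overline{k+1}$, or at the turning point $j_\al=\el$ or $\bar\el$. We show that $Y_k(zc)$ is admissible iff replacing $j_\al$ by its successor in the order $1<2<\dots<\el<\bar\el<\dots<\bar1$ yields some $\textbf{j}'\in S$. In that case $\La_{\textbf{j}}A_k(zcq^{-r_k}t)^{-1}=\La_{\textbf{j}'}$; this is a direct verification using the type $C$ expressions of $A_k$, with $r_\el=2$. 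Conversely, every $\textbf{j}'\neq(1,\dots,i)$ in $S$ is obtained this way from some $\textbf{j}\in S$, by decreasing a suitable index. By induction on $ht_i$, this gives $\mathbf{A}(Y_i(z))=\{\La_{\textbf{j}}\}_{\textbf{j}\in S}$. Note that the upward moves must also never leave $S$: if a move would create a forbidden pair, then the corresponding $Y_k$ must fail to be admissible.

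\textbf{Step 3.} Since all the monomials in $\mathbf{A}(Y_i(z))$ are generic and regular, the algorithm works. Theorem \ref{theoremalgowork} b) then gives the coefficients $c_{\textbf{j}}$, and they lie in $K^\times$ by Proposition \ref{coeffwelldefprop}.

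The main obstacle is the boundary analysis near node $\el$ in Steps 1--2. There one must show that the forbidden configurations of $S$ coincide exactly with those moves where admissibility fails, namely blocking by $Y_k(zcq^{-2r_k})$ or $Y_k(zct^2)$. We must also show that no non-generic or non-regular monomial, like those in Remark \ref{faild4}, is ever reached. I would first handle $i=1,2$ by hand to calibrate the shifts, and then run a case split on the relative positions of $j_\al\le\el$ and $j_\beta\ge\bar\el$.
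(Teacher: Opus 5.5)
Your plan is correct and is essentially the paper's proof: the paper also shows, by induction on $ht_i$, that the monomials produced at step $n$ from $Y_i(z)$ are exactly the $\La_{\textbf{j}}$ with $\textbf{j}\in S$ and $ht_i(\textbf{j})=n$; it does this through a case analysis (a)--(h) of $\Pi_k(\La_{\textbf{j}})$ in which the excluded pairs of $S$ are exactly the blocked or non-regular configurations, and then it invokes Theorem \ref{theoremalgowork}. One small slip in your Step 2: a positive $Y_k(zc)$ comes from $j_\al=k$ or $j_\al=\overline{k+1}$, not from $j_\al=k+1$ (which contributes $Y_k^{-1}$), and with that correction your ``admissible iff the successor move stays in $S$'' criterion agrees with the paper's cases (a), (c), (e), (f), (g).
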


\begin{proof}
Let $i\in I$. Firstly, we list all the cases where $d_{k,a}(\La_\textbf{j})\neq0$ for a $\textbf{j}\in S$ and $k\in I$. Firstly, we list the possible cases for $\Pi_k(\La_\textbf{j})$. For each case we construct all the monomials produced by the algorithm from $\La_\textbf{j}$ in direction $k$ and, when possible, we exhibit a monomial $\La_{\textbf{j}'}$ verifying $ht_i(\textbf{j}')<ht_i(\textbf{j})$ such that an admissible transformation in $\La_{\textbf{j}'}$ gives exactly $\La_\textbf{j}$.
\begin{itemize}
    \item[a)] There exists $1\le\al\le i$ such that $j_\al=k$ and for all other $1\le\beta\le i$, $j_\beta\not\in\{k+1,\bar k, \overline{k+1}\}$. In this case, $\Pi_k(\La_\textbf{j})=Y_k(zq^{-(i+k-2\al)}t^{i+k-2\al})$. 
    Here, the variable $Y_k$ is admissible and the algorithm constructs exactly the monomial $\La_{\textbf{j}'}$ which $\textbf{j}'=(j_1,\dots,j_{\al-1}, j_\al+1, j_{\al+1},\dots,j_i)$, which is in $S$ since $\textbf{j}$ is and $j_\beta\neq \overline{k+1}$ for all $\beta\neq \al$.\\
    We have $ht_i(\textbf{j}')=ht_i(\textbf{j})+1$. 
    
    \item[b)] There exists $1\le\al\le i$ such that $j_\al=k+1$ and for all other $1\le\beta\le i$, $j_\beta\not\in\{k,\bar k, \overline{k+1}\}$. In this case, $\Pi_k(\La_\textbf{j})=Y_k(zq^{-(i+k-2\al)-2}t^{i+k-2\al+2})^{-1}$. 
    Here, the monomial $\La_{\textbf{j}'}$ is in the case (a), where $\textbf{j}'=(j_1,\dots,j_{\al-1}, j_\al-1, j_{\al+1},\dots,j_i)$ is in $S$ since $\textbf{j}$ is and $j_\beta\neq \overline{k}$ for all $\beta\neq \al$. Moreover, the transformation $A_k(zq^{-(i+k-2\al)-r_k}t^{i+k-2\al+1})$ is admissible in $\La_{\textbf{j}'}$ and gives exactly $\La_\textbf{j}$.
    We have $ht_i(\textbf{j}')=ht_i(\textbf{j})-1$.

    \item[c)] There exists $1\le\al\le i$ such that $j_\al=\overline{k+1}$, and for all other $1\le\beta\le i$, $j_\beta\not\in\{ k, {k+1}, \overline k\}$. In this case, $\Pi_k(\La_\textbf{j})=Y_k(zq^{-2\el+k-i+2\al-2}t^{2\el-k+i-2\al})$.
    Here, the variable $Y_k$ is admissible and the algorithm constructs exactly the monomial $\La_{\textbf{j}'}$ which $\textbf{j}'=(j_1,\dots,j_{\al-1}, j_\al+1, j_{\al+1},\dots,j_i)$, which is in $S$ since $\textbf{j}$ is and $j_\beta\neq k$ for all $\beta\neq \al$. 
    We have $ht_i(\textbf{j}')=ht_i(\textbf{j})+1$. 
    
    \item[d)] There exists $1\le\al\le i$ such that $j_\al=\overline{k}$ and for all other $1\le\beta\le i$, $j_\beta\not\in\{\overline{k+1},  k, {k+1}\}$. In this case, $\Pi_k(\La_\textbf{j})=Y_k(zq^{-2\el+k-i+2\al-4}t^{2\el-k+i-2\al+2})^{-1}$.
    Here, the monomial $\La_{\textbf{j}'}$ is in the case (c), where $\textbf{j}'=(j_1,\dots,j_{\al-1}, j_\al-1, j_{\al+1},\dots,j_i)$ is in $S$ since $\textbf{j}$ is and $j_\beta\neq \overline{k}$ for all $\beta\neq \al$. Moreover, the transformation $A_k(zq^{-2\el+k-i+2\al-2-r_k}t^{2\el-k+i-2\al+1})$ is admissible in $\La_{\textbf{j}'}$ and gives exactly $\La_\textbf{j}$.
    We have $ht_i(\textbf{j}')=ht_i(\textbf{j})-1$. 
    
    \item[e)] There exists $1\le\al<\beta\le i$ such that $j_\al=k\neq\el$, $j_\beta=\overline{k+1}$, and for all other $1\le\gamma\le i$, $j_\gamma\not\in\{ \overline k, {k+1}\}$. In this case, $$\Pi_k(\La_\textbf{j})=Y_k(zq^{-(i+k-2\al)}t^{i+k-2\al})Y_k(zq^{-2\el+k-i+2\beta-2}t^{2\el-k+i-2\beta}).$$ 
    In the first spectral parameter $q^{-1}$ and $t$ play symmetric roles and it is not the case in the second term so we are not in the case of a $Y_k(za)Y_k(zaq^{-2}t^2)$. 
    To know which variable $Y_k$ is admissible, we need to compare the spectral parameters. This is a straightforward computation:\\
    To simplify notations let $a=q^{-(i+k-2\al)}t^{i+k-2\al}$. We recall $q,t$ are generic variables.
    \begin{align*}
        \Pi_k(\La_\textbf{j})=Y_k(za)Y_k(zaq^{\pm2r_k})&\Longleftrightarrow aq^{\pm2r_k}=q^{-2\el+k-i+2\beta-2}t^{2\el-k+i-2\beta}\\[4mm]
        &\Longleftrightarrow\left\{\begin{matrix} -(i+k-2\al)\pm2r_k&=&-2\el+k-i+2\beta-2\\
        i+k-2\al&=&2\el-k+i-2\beta
        \end{matrix}  \right.\\[5mm]
        &\Longleftrightarrow\left\{\begin{matrix} \pm2r_k&=&-2\\
        i+k-2\al&=&2\el-k+i-2\beta
        \end{matrix}  \right.\\[5mm]
        &\Longleftrightarrow r_k=1,\quad \Pi_k(\La_\textbf{j})=Y_k(za)Y_k(zaq^{-2}),\quad k=\el+\al-\beta
    \end{align*}
    and in the same way, 
    \begin{align*}
    \Pi_k(\La_\textbf{j})=Y_k(za)Y_k(zat^{\pm2})&\Longleftrightarrow at^{\pm2}=q^{-2\el+k-i+2\beta-2}t^{2\el-k+i-2\beta}\\[4mm]
    &\Longleftrightarrow\left\{\begin{matrix} -(i+k-2\al)&=&-2\el+k-i+2\beta-2\\
    i+k-2\al\pm2 &=&2\el-k+i-2\beta
    \end{matrix}  \right.\\[5mm]
    &\Longleftrightarrow\left\{\begin{matrix}  -(i+k-2\al)&=&-2\el+k-i+2\beta-2\\
    \pm2&=&-2
    \end{matrix}  \right.\\[5mm]
    &\Longleftrightarrow  \Pi_k(\La_\textbf{j})=Y_k(za)Y_k(zat^{-2}),\quad k=\el+\al-\beta+1
    \end{align*}
    Finally, if $k=\el+\al-\beta$, then the second variable is the only admissible variable and the algorithm constructs the monomial $\La_\textbf{j'}$ with $\textbf{j'}=(j_\gamma')_\gamma$ such that $j'_\gamma=j_\gamma$ for $\gamma\not\in\{\al,\beta\}$ and $(j_\al',j'_\beta)=(k,\overline{k})$. Indeed, $k=\el+\al-\beta\neq\el+\al-\beta+1$ then $\textbf{j}' \in S$.\\
    If $k=\el+\al-\beta+1$, then the first variable is the only admissible variable and the algorithm constructs the monomial $\La_{\textbf{j}''}$ with $\textbf{j}''=(j_\gamma'')_\gamma$ such that $j''_\gamma=j_\gamma$ for $\gamma\not\in\{\al,\beta\}$ and $(j_\al'',j''_\beta)=(k+1,\overline{k+1})$. Indeed, $k+1=\el+\al-\beta+2\neq\el+\al-\beta+1$ then $\textbf{j}'' \in S$. \\
    Else, the algorithm constructs both monomials, and both $\textbf{j}'$ and $\textbf{j}''$ are in $S$ since $k,k+1\neq\el+\al-\beta+1$. 
    We have $ht_i(\textbf{j}'')=ht_i(\textbf{j}')=ht_i(\textbf{j})+1$. 
    
    \item[f)] $k\neq \el$ and there exists $1\le\al<\beta\le i$ such that $j_\al=k+1$, $j_\beta=\overline{k+1}$, and for all other $1\le\gamma\le i$, $j_\gamma\not\in\{k, \overline k\}$. In this case, $$\Pi_k(\La_\textbf{j})=Y_k(zq^{-(i+k-2\al)-2}t^{i+k-2\al+2})^{-1}Y_k(zq^{-2\el+k-i+2\beta-2}t^{2\el-k+i-2\beta}).$$ 
    This contradicts the regularity condition if and only if $\Pi_k(\Lambda_\textbf{j}) = Y_k(za)Y_k(zat^2)^{-1}$. This equality holds if and only if $k+1=\ell+\alpha-\beta+1$, a case that does not appear in $S$. \\  
    The second variable is clearly admissible and the algorithm produces a monomial $\La_{\textbf{j}'}$ with $\textbf{j}'=(j_\gamma')_\gamma$ such that $j'_\gamma=j_\gamma$ for $\gamma\not\in\{\al,\beta\}$ and $(j_\al',j'_\beta)=(k+1,\bar k)$. Since $\textbf{j} \in S$, we have $\textbf{j}' \in S$.\\
    Moreover, let $\textbf{j}''=(j_\gamma'')_\gamma$ such that $j''_\gamma=j_\gamma$ for $\gamma\neq\al$ and $j''_\al=k$. Since $\textbf{j}\in S$, we have $\textbf{j}''\in S$. Moreover, the transformation $A_k(zq^{-(i+k-2\al)-1}t^{i+k-2\al+1})^{-1}$ is admissible in $\La_{\textbf{j}''}$ (which is in case (e)) and the algorithm constructs exactly $\La_\textbf{j}$.
    We have $ht_i(\textbf{j}')=ht_i(\textbf{j})+1$, and $ht_i(\textbf{j}'')=ht_i(\textbf{j})-1$.

    \item[g)] There exists $1\le\al<\beta\le i$ such that $j_\al=k$, $j_\beta=\overline{k}$, and for all other $1\le\gamma\le i$, $j_\gamma\not\in\{ \overline{k+1}, {k+1}\}$. In this case, $$\Pi_k(\La_\textbf{j})=Y_k(zq^{-(i+k-2\al)}t^{i+k-2\al})Y_k(zq^{-2\el+k-i+2\beta-4}t^{2\el-k+i-2\beta+2})^{-1}.$$ 
    This contradicts the regularity condition if and only if $\Pi_k(\Lambda_\textbf{j}) = Y_k(za)Y_k(zaq^{-2r_k})^{-1}$. This equality holds if and only if $k=\ell+\alpha-\beta+1$, a case that does not appear in $S$.    
    The first variable is clearly admissible and the algorithm produces a monomial $\La_{\textbf{j}'}$ with $\textbf{j}'=(j_\gamma')_\gamma$ such that $j'_\gamma=j_\gamma$ for $\gamma\not\in\{\al,\beta\}$ and $(j_\al',j'_\beta)=(k,\overline{k+1})$. Since $\textbf{j} \in S$, we have $\textbf{j}' \in S$.\\
    Moreover, let $\textbf{j}''=(j_\gamma'')_\gamma$ such that $j''_\gamma=j_\gamma$ for $\gamma\neq\beta$ and $j''_\beta=\overline{k+1}$. Since $\textbf{j}\in S$, we have $\textbf{j}''\in S$. Moreover, the transformation $A_k(zq^{-2\el+k-i+2\beta-3}t^{2\el-k+i-2\beta+1})^{-1}$ is admissible in $\La_{\textbf{j}''}$ (which is in case (e)) and the algorithm constructs exactly $\La_\textbf{j}$.
    We have $ht_i(\textbf{j}')=ht_i(\textbf{j})+1$, and $ht_i(\textbf{j}'')=ht_i(\textbf{j})-1$.

    \item[h)] There exists $1\le\al<\beta\le i$ such that $j_\al=k+1$, $j_\beta=\overline{k}$, and for all other $1\le\gamma\le i$, $j_\gamma\not\in\{ \overline{k+1}, {k}\}$. In this case, $$\Pi_k(\La_\textbf{j})=Y_k(zq^{-(i+k-2\al)-2}t^{i+k-2\al+2})^{-1}Y_k(zq^{-2\el+k-i+2\beta-4}t^{2\el-k+i-2\beta+2})^{-1}.$$
    This contradicts de regularity condition if and only if $r_k=2$ and $k=\el$, which is impossible as we would have $j_\al=j_\beta$.\\
    Let $\textbf{j}',\textbf{j}''$ such that $j_\gamma'=j_\gamma$ for all $\gamma\neq\al$, $j_\al'=k$ and $j_\gamma''=j_\gamma$ for all $\gamma\neq\beta$, $j_\beta''=\overline{k+1}$. 
    As in the case (e), a straightforward computation shows that if $k\neq\el+\al-\beta$, then $k+1\neq\el+\al-\beta+1$ thus $\textbf{j}''\in S$. Moreover, the transformation $A_k(zq^{-2\el+k-i+2\beta-3}t^{2\el-k+i-2\beta+1})^{-1}$ is admissible in $\La_{\textbf{j}''}$ (which is in case (g)) and the algorithm constructs exactly $\La_\textbf{j}$.\\
    If $k=\el+\al-\beta$, then $\textbf{j}'\in S$ and the transformation $A_k(zq^{-2\el+k-i+2\beta-3}t^{2\el-k+i-2\beta+1})^{-1}$ is admissible in $\La_{\textbf{j}'}$ (which is in case (f)) and the algorithm constructs exactly $\La_\textbf{j}$.\\
    We have $ht_i(\textbf{j}'')=ht_i(\textbf{j}')=ht_i(\textbf{j})-1$. 
\end{itemize}

We prove by induction that for all $n\in \llbracket 0, i(2\el-i)\rrbracket$, the property $\mathcal{P}_n$: " At the $n$-th step of the algorithm starting from the monomial $Y_i(z)$,
we obtain exactly the monomials 
$$\La_{j_1}(zq^{-(i-1)}t^{i-1})\La_{j_2}(zq^{-(i-3)}t^{i-3}) \ldots \La_{j_{i-1}}(zq^{(i-3)}t^{-(i-3)}) \La_{j_i}(zq^{(i-1)}t^{-(i-1)}),$$
for all $\mathbf{j}=(j_\al)_\al$ such that $ht_i(\mathbf{j})=\sum_{\alpha=1}^i(j_\alpha-\alpha)=n$" is true.\\
If $n=0$ then we get $$\La_{j_1}(zq^{-(i-1)}t^{i-1})
\La_{j_2}(zq^{-(i-3)}t^{i-3}) \ldots \La_{j_{i-1}}(zq^{(i-3)}t^{-(i-3)}) \La_{j_i}(zq^{(i-1)}t^{-(i-1)})=Y_i(z)$$ 
with $j_\al=\al$ for $\al=1,2,\ldots,i$. Then the only monomial at the $0$-th step is $Y_i(z)$.\\
We assume $\mathcal{P}_n$ is true for an integer $n\in  \llbracket 0, i(2\el-i)-1\rrbracket$. \\
Let us prove that all monomials $\La_\textbf{j}$ such that $ht_i(\textbf{j})=n+1$ are obtained by the algorithm by one transformation from a monomial $\La_{\textbf{j}'}$ such that $ht_i(\textbf{j}')=n$.
Let $\textbf{j}$ be such that $ht_i(\textbf{j})=n+1>0$. The only dominant monomial is for $\textbf{j}=(1,2,\dots,i)$, which is obtained at the $0$-th step. Hence, there exists a $k\in I$ and $a\in q^\Z t^\Z$ such that $d_{k,a}(\La_\textbf{j})<0$.
Hence, we are in one of the cases (b), (d), (f), (g), or (h). In all of these cases, we have exhibited a monomial $\La_{\textbf{j}'}$ such that $ht_i(\textbf{j}')=ht_i(\textbf{j})-1=n$ and an admissible transformation in $\La_{\textbf{j}'}$ giving exactly $\La_\textbf{j}$, hence $\La_\textbf{j}$ is obtained at the $(n+1)$-th step of the algorithm.\\
Conversely, a monomial $X\in\mathbf{M}$ obtained at the $(n+1)$-th step of the algorithm is obtained from a monomial $X'\in\mathbf{M}$ obtained at the $n$-th step by an admissible transformation. By induction hypothesis, we have $X'=\La_{\textbf{j}'}$ with $ht_i(\textbf{j}')=n$. 
Moreover, by the cases (a), (c), (e), (f), and (g) we have proved that all the monomials produced by the algorithm are of the form $\La_\textbf{j}$ such that $ht_i(\textbf{j})=ht_i(\textbf{j}')+1=n+1$. 
Hence the result by induction.

\end{proof}

\subsubsection{Type $B_\el$ :}
Let 
\[D=
\begin{pmatrix}
2 & 0 &  \cdots  & 0 &0\\
0 & 2 &  \cdots  & 0&0 \\
\vdots & \ddots & \ddots & \ddots &\vdots\\
0 & \cdots & 0 & 2&0 \\
0 & \cdots & 0 & 0&1 
\end{pmatrix}
\]
the diagonal matrix such that $DC$ is symmetric.

Let 
\begin{align*}
\La_i(z) &:= Y_i(z q^{-2i+2} t^{i-1})\, Y_{i-1}(z q^{-2i} t^{i})^{-1}, \quad\quad i = 1, \ldots, \el - 1, \\
\La_\ell(z) &:= Y_\ell(z q^{-2\ell+3} t^{\ell-1})\, Y_\ell(z q^{-2\ell+1} t^{\ell-1})\, Y_{\ell-1}(z q^{-2\ell} t^{\ell})^{-1}, \\
\La_{\el+1}(z) &:=  Y_\ell(z q^{-2\ell+3} t^{\ell-1})\, Y_\ell(z q^{-2\ell-1} t^{\ell+1})^{-1}, \\
\La_{\bar{\ell}}(z) &:= Y_{\ell-1}(z q^{-2\ell+2} t^{\ell})\, Y_\ell(z q^{-2\ell+1} t^{\ell+1})^{-1}\, Y_\ell(z q^{-2\ell-1} t^{\ell+1})^{-1}, \\
\La_{\bar{i}}(z) &:= Y_{i-1}(z q^{-4\ell+2i+2} t^{2\ell-i})\, Y_i(z q^{-4\ell+2i} t^{2\ell-i+1})^{-1},
  \quad\quad i = 1, \ldots, \ell - 1,
\end{align*}
with $\bar i=2\el+2-i$.  Then, the fundamental fields of $\mathbf{W}_{q,t}(B_\el)$ are
\begin{align}\label{qtcharacterCl}
\s_i(z) = \sum_{\textbf{j}=(j_1,\dots,j_i)\in S} c_{\textbf{j}}(q,t)\La_{\textbf{j}},
\end{align}
where $$\La_{\textbf{j}}=\La_{j_1}(zq^{-2(i-1)}t^{i-1})
\La_{j_2}(zq^{-2(i-3)}t^{i-3}) \ldots  \La_{j_i}(zq^{2(i-1)}t^{-(i-1)}),$$
with $i=1,\ldots,\el-1$, the coefficients $c_\textbf{j}(q,t)\in K$ will be defined below, and $S$ is the set of $1\le j_1\le \dots\le j_i\le 2\el+1$ such that for all $\al$, $j_\al<j_{\al+1}$ or $j_\al=j_{\al+1}=\el+1$.

The formula for $T_{\el}(z)$, which corresponds to the spinor representation, is inspired by the formula of the $\el$-th fundamental $q$-character of $\widehat{\mathfrak{so}_{2n+1}}$ in \cite{frenkel1996quantum}:
\begin{align} \label{spinorCl}
T_{\el}(z)
=
\sum_{\sigma_1, \ldots, \sigma_{\el} = \pm 1}c_{\sigma_1,\ldots,\sigma_\el}(q,t)
b_{\sigma_1}(z|1)\,
b_{\sigma_2}(z q^{-2\sigma_1}t|2)
\cdots
b_{\sigma_\el}(z q^{-2\sigma_1-\cdots-2\sigma_{\el-1}}t^{\sigma_1+\ldots+\sigma_{\el-1}}|\el),
\end{align}
where
\[
\begin{aligned}
b_{1}(z|\el) &= Y_{\el}(z q^{-2\el }t^{\ell+1})^{-1}, \\[4pt]
b_{1}(z|k) &= 1, \qquad k = 1, \ldots, \el-1, \\[4pt]
b_{-1}(z|\el) &= Y_{\el-1}(z q^{-2\el+1}t^\el)^{-1} Y_{\el}(z q^{-2\el + 2}t^{\el-1}) , \\[4pt]
b_{-1}(z|k) &= Y_{k-1}(z q^{-2\el + 1}t^{\el})^{-1} Y_{k}(z q^{-2\el +3}t^{\el-1}),
\qquad k = 1, \ldots, \el-1.
\end{aligned}
\]

\begin{prop}
    For all $1\le i\le \el$, there exists $c_{\textbf{j}}(q,t)\in K^S$ such that $T_i(z)$ belongs to $\WWqt$.
\end{prop}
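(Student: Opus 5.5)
The approach follows the proof of Proposition \ref{fundamb} for type $C_\el$. By Theorem \ref{theoremalgowork}, it suffices to show that the algorithm starting from $Y_i(z)$ works, i.e.\ never fails and terminates. It then suffices to identify the set $\mathbf{A}(Y_i(z))$ with $\{\La_{\textbf{j}}\}_{\textbf{j}\in S}$ (resp.\ with the set of monomials $b_{\sigma_1}(z|1)\cdots b_{\sigma_\el}(\cdots|\el)$ for $i=\el$). Since this set is finite and all its elements are generic and regular, the algorithm works. The coefficients $c_{\textbf{j}}(q,t)$ are then the ones produced by \eqref{coefalg}, which are well-defined and independent of the path by Theorem \ref{theoremalgowork}(a).

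\emph{Local analysis for $1\le i\le\el-1$.} For each $k\in I$ and $\textbf{j}\in S$, I would list the possible forms of $\Pi_k(\La_{\textbf{j}})$, mirroring cases (a)--(h) of type $C_\el$. For $k\le\el-2$ the variables $Y_k$ occur only through $\La_k,\La_{k+1},\La_{\overline{k}},\La_{\overline{k+1}}$, and the analysis is literally the same as in type $C_\el$ with $q$ replaced by $q^2$ in the spectral shifts ($r_k=2$). The genuinely new cases are $k=\el-1$ and $k=\el$, since $Y_\el$ appears in $\La_\el,\La_{\el+1},\La_{\overline{\el}}$ with two factors. Here the rules are:
\[
\La_\el\to\La_{\el+1}\to\La_{\overline{\el}}\quad\text{via } A_\el^{-1}\ (r_\el=1),
\qquad
\La_{\el-1}\to\La_\el,\quad \La_{\overline{\el}}\to\La_{\overline{\el-1}}\quad\text{via } A_{\el-1}^{-1}.
\]
In each case I would check three things. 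First, $\Pi_k(\La_{\textbf{j}})$ is generic and regular; this is where the condition defining $S$ (strict increase, except for the repeated letter $\el+1$) is used. Second, the admissible variables yield exactly the monomials $\La_{\textbf{j}'}$ with $ht_i(\textbf{j}')=ht_i(\textbf{j})+1$ and $\textbf{j}'\in S$. Third, whenever $d_{k,a}(\La_{\textbf{j}})<0$, there is some $\textbf{j}'\in S$ of height $ht_i(\textbf{j})-1$ with an admissible transformation giving $\La_{\textbf{j}}$. The repeated letter $\el+1$ is the delicate point. A product $\La_{\el+1}(za)\La_{\el+1}(zaq^{4}t^{-2})$ contributes $Y_\el(zc)Y_\el(zcq^{-2}t^{2})^{-1}$-type factors at shifted parameters. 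One must verify by the same spectral-parameter comparisons as in case (e) of type $C_\el$ (using genericity of $q,t$) that these factors neither coincide nor violate regularity, and that the $\el$-direction transitions from $(\dots,\el,\el+1,\dots)$, $(\dots,\el+1,\el+1,\dots)$, $(\dots,\el+1,\overline{\el},\dots)$ behave as claimed.

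\emph{Induction.} As in Proposition \ref{fundamb}, I would prove by induction on $n$ that the monomials obtained at step $n$ are exactly the $\La_{\textbf{j}}$ with $ht_i(\textbf{j})=n$. The base case is $\textbf{j}=(1,\dots,i)$, giving $Y_i(z)$. The inductive step uses the upward cases to show every created monomial has this form, and the downward cases together with the fact that $(1,\dots,i)$ is the only dominant $\La_{\textbf{j}}$ to show every such monomial is created. Termination follows since heights in $S$ are bounded.

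\emph{Spinor case $i=\el$.} I would index monomials by $\sigma\in\{\pm1\}^\el$ and use the number of $+1$'s as the height. The transformation in direction $k$ should flip $\sigma_k$ from $-1$ to $+1$ when $\sigma_{k+1}=+1$ (resp.\ $\sigma_\el$ for $k=\el$), in analogy with the $q$-character of the spin representation in \cite{frenkel1996quantum}. The same three-point local check and induction then apply; here every $\Pi_k$ has at most two factors, so the check is easy. Genericity and regularity are automatic because the representation is minuscule.

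The main obstacle I expect is the bookkeeping at $k=\el-1,\el$ with the doubled letter $\el+1$. There I would compute $\Pi_{\el}(\La_{\textbf{j}})$ explicitly for all configurations of letters in $\{\el,\el+1,\overline{\el}\}$ and compare exponents of $q$ and $t$ separately.
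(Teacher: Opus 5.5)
Your overall architecture matches the paper's. You reduce to Theorem~\ref{theoremalgowork}. For $i\le\el-1$ you do a direction-by-direction case analysis followed by induction on $ht_i$, and for $i=\el$ you index by $\sigma\in\{\pm1\}^\el$. However, two of the concrete claims you build on are false, and the first one breaks the spinor case as written.

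\textbf{Spinor case.} The move in a direction $k<\el$ is not a single flip of $\sigma_k$. A positive $Y_k$ occurs in $b_\sigma$ exactly when $(\sigma_k,\sigma_{k+1})=(-1,+1)$; if both are $-1$, the factors of $b_{-1}(\cdot|k)$ and $b_{-1}(\cdot|k+1)$ telescope. The corresponding $A_k^{-1}$ turns $(-1,+1)$ into $(\sigma_k,\sigma_{k+1})=(+1,-1)$. Indeed, the new $Y_k^{-1}$ and $Y_{k+1}$ rebuild $b_{-1}(\cdot|k+1)$, and for $k=\el-1$ one of the two new $Y_\el$'s cancels $b_1(\cdot|\el)$. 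Only direction $\el$ flips a single sign.

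This already fails for $B_2$. From $b_{(-,+)}=Y_1(zq^{-1}t)Y_2(zq^{-2}t^2)^{-1}$ the only admissible variable is $Y_1(zq^{-1}t)$, and $A_1(zq^{-3}t^2)^{-1}$ yields $Y_1(zq^{-5}t^3)^{-1}Y_2(zq^{-4}t^2)=b_{(+,-)}$, not $b_{(+,+)}=Y_2(zq^{-6}t^4)^{-1}$. Your rule would only ever produce the $\el+1$ sequences $(-,\dots,-,+,\dots,+)$, so the identification with the $2^\el$ terms of $T_\el$ fails. Moreover, the actual moves in directions $k<\el$ preserve the number of $+1$'s, so that number is not a height, and the induction ``step $n$ = height $n$'' fails too.

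To fix this, use the height $\sum_{k:\sigma_k=+1}(\el-k+1)$, which is the number of simple roots subtracted since $\epsilon_k=\al_k+\dots+\al_\el$ in the usual basis. Alternatively, argue as the paper does. For $\sigma\ne(-1,\dots,-1)$, take $k$ maximal with $\sigma_k=+1$, so $\sigma_{k+1}=-1$ or $k=\el$. Undoing the move gives a lexicographically smaller $\sigma'$ in which $Y_k$ is admissible, and you induct on the lexicographic order. Genericity and regularity then hold because each $\Pi_k(b_\sigma)$ has at most one factor; that is the real content behind your ``minuscule'' remark.

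\textbf{Directions $k<\el$ for $i\le\el-1$.} The analysis is not that of $C_\el$ with $q\mapsto q^2$, because the barred letters are shifted differently. $\La_{\overline{k+1}}$ at position $\beta$ carries $Y_k$ at $q^{-4\el+2k-2i+4\beta+2}t^{2\el-k+i-2\beta}$, whereas the substitution would give $q^{-4\el+2k-2i+4\beta-4}$. Consequently, every ratio between a $Y_k^{\pm1}$ coming from $\La_k$ or $\La_{k+1}$ and one coming from $\La_{\bar k}$ or $\La_{\overline{k+1}}$ has $q$-exponent $\equiv 2\pmod 4$. Such a ratio is therefore never $q^{\pm4}$, $t^{\pm2}$ or $q^{\pm4}t^{\mp2}$. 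So in case (e) both variables are always admissible, and regularity never fails.

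In $C_\el$, by contrast, the coincidences at $k=\el+\al-\beta$ and $k=\el+\al-\beta+1$ do occur, and they force the extra exclusion in the $C_\el$ index set. Importing that analysis verbatim would give wrong transitions and contradict the $B_\el$ set $S$, which has no such exclusion.

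Similarly, in direction $\el$ a single $\La_{\el+1}(za)$ contributes $Y_\el(zc)Y_\el(zcq^{-4}t^2)^{-1}$. The pattern $Y_\el(zc)Y_\el(zcq^{-2}t^2)^{-1}$ you mention would be non-regular, since $r_\el=1$. Consecutive copies of $\La_{\el+1}$ telescope, leaving only the two outer factors of each run. This is what reduces direction $\el$ to the four configurations (i)--(iv) treated in the paper.
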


\begin{proof}
Let $i\in\llbracket 1,\el-1\rrbracket$. Firstly, we list all the cases where $d_{k,a}(\La_\textbf{j})\neq0$ for a $\textbf{j}\in S$ and $k\in I$. For each case we construct all the monomials produced by the algorithm from $\La_\textbf{j}$ in direction $k$ and, when possible, we exhibit a monomial $\La_{\textbf{j}'}$ verifying $ht_i(\textbf{j}')<ht_i(\textbf{j})$ such that an admissible transformation in $\La_{\textbf{j}'}$ gives exactly $\La_\textbf{j}$. However, we need to distinguish the cases $k=\el$ and $k\neq \el$.
If $k<\el$:
\begin{itemize}
    \item[a)] There exists $1\le\al\le i$ such that $j_\al=k$ and for all other $1\le\beta\le i$, $j_\beta\not\in\{k+1,\bar k, \overline{k+1}\}$. In this case, $\Pi_k(\La_\textbf{j})=Y_k(zq^{-2(i+k-2\al)}t^{i+k-2\al})$.\\ 
    Here the algorithm produces a monomial $\La_{\textbf{j}'}$ with $\textbf{j}'=(j_1,\dots,j_{\al-1}, j_\al+1, j_{\al+1},\dots,j_i)$, which is in $S$. Moreover, $ht_i(\textbf{j}')=ht_i(\textbf{j})+1$.
    
    \item[b)] There exists $1\le\al\le i$ such that $j_\al=k+1$ and for all other $1\le\beta\le i$, $j_\beta\not\in\{k,\bar k, \overline{k+1}\}$. In this case, $\Pi_k(\La_\textbf{j})=Y_k(zq^{-2(i+k-2\al)-4}t^{i+k-2\al+2})^{-1}$.
    Here, the monomial $\La_{\textbf{j}'}$ is in the case (a), where $\textbf{j}'=(j_1,\dots,j_{\al-1}, j_\al-1, j_{\al+1},\dots,j_i)$ is in $S$. \\
    Moreover, the transformation $A_k(zq^{-2(i+k-2\al)-2}t^{i+k-2\al+1})^{-1}$ is admissible in $\La_{\textbf{j}'}$ and gives exactly $\La_\textbf{j}$.
    We have $ht_i(\textbf{j}')=ht_i(\textbf{j})-1$.

    \item[c)] There exists $1\le\al\le i$ such that $j_\al=\overline{k+1}$, and for all other $1\le\beta\le i$, $j_\beta\not\in\{ k, {k+1}, \overline k\}$. In this case, $\Pi_k(\La_\textbf{j})=Y_k(zq^{-4\el+2k-2i+4\al+2}t^{2\el-k+i-2\al})$.\\
    Here, the variable $Y_k$ is admissible and the algorithm constructs the monomial $\La_{\textbf{j}'}$ with $\textbf{j}'=(j_1,\dots,j_{\al-1}, j_\al+1, j_{\al+1},\dots,j_i)$, which is in $S$. 
    We have $ht_i(\textbf{j}')=ht_i(\textbf{j})+1$.  
    
    \item[d)] There exists $1\le\al\le i$ such that $j_\al=\overline{k}$ and for all other $1\le\beta\le i$, $j_\beta\not\in\{\overline{k+1},  k, {k+1}\}$. In this case, $\Pi_k(\La_\textbf{j})=Y_k(zq^{-4\el+2k-2i+4\al-2}t^{2\el-k+i-2\al+2})^{-1}$.
    Here, the monomial $\La_{\textbf{j}'}$ is in the case (c), where $\textbf{j}'=(j_1,\dots,j_{\al-1}, j_\al-1, j_{\al+1},\dots,j_i)$ is in $S$. \\
    Moreover, the transformation $A_k(zq^{-4\el+2k-2i+4\al}t^{2\el-k+i-2\al+1})^{-1}$ is admissible in $\La_{\textbf{j}'}$ and gives exactly $\La_\textbf{j}$.
    We have $ht_i(\textbf{j}')=ht_i(\textbf{j})-1$.

    \item[e)] There exists $1\le\al<\beta\le i$ such that $j_\al=k$, $j_\beta=\overline{k+1}$, and for all other $1\le\gamma\le i$, $j_\gamma\not\in\{ \overline k, {k+1}\}$. In this case, $$\Pi_k(\La_\textbf{j})=Y_k(zq^{-2(i+k-2\al)}t^{i+k-2\al})Y_k(zq^{-4\el+2k-2i+4\beta+2}t^{2\el-k+i-2\beta}).$$
    In the first spectral parameter $q^{-2}=q^{-r_k}$ and $t$ play symmetric roles and it is not the case in the second term so we are not in the case of a $Y_k(za)Y_k(zaq^{-2r_k}t^2)$.
    As in type $C_\el$, a straightforward computation is necessary to determine which variable is admissible. However we get that both are admissible. Hence, the algorithm produces two monomials $\La_{\textbf{j}'}$ and $\La_{\textbf{j}''}$ with $\textbf{j}'=(j_\gamma')_\gamma$ such that $j'_\gamma=j_\gamma$ for $\gamma\not\in\{\al,\beta\}$ and $(j_\al',j'_\beta)=(k,\overline{k})$, and $\textbf{j}''=(j_\gamma'')_\gamma$ such that $j''_\gamma=j_\gamma$ for $\gamma\not\in\{\al,\beta\}$ and $(j_\al'',j''_\beta)=(k+1,\overline{k+1})$. Since $\textbf{j} \in S$, we have $\textbf{j}' \in S$ and $\textbf{j}'' \in S$.\\
    We have $ht_i(\textbf{j}'')=ht_i(\textbf{j}')=ht_i(\textbf{j})+1$.

    \item[f)] There exists $1\le\al<\beta\le i$ such that $j_\al=k+1$, $j_\beta=\overline{k+1}$, and for all other $1\le\gamma\le i$, $j_\gamma\not\in\{k, \overline k\}$. In this case, $$\Pi_k(\La_\textbf{j})=Y_k(zq^{-2(i+k-2\al)-4}t^{i+k-2\al+2})^{-1}Y_k(zq^{-4\el+2k-2i+4\beta+2}t^{2\el-k+i-2\beta}).$$
    A straightforward computation shows that this never contradicts the regularity condition (we never have $\Pi_k(\La_\textbf{j})=Y_k(za)Y_k(zaq^{-2r_k})$ nor $\Pi_k(\La_\textbf{j})=Y_k(za)Y_k(zat^2)$).
    Moreover, the algorithm produces a monomial $\La_{\textbf{j}'}$ with $\textbf{j}'=(j_1,\dots,j_{\beta-1}, j_\beta+1, j_{\beta+1},\dots,j_i)$, which is in $S$. \\
    Let $\textbf{j}''=(j_1,\dots,j_{\al-1}, j_\al-1, j_{\al+1},\dots,j_i)$, which is in $S$. The monomial $\La_{\textbf{j}''}$ is in the case (e) and the transformation $A_k(zq^{-2(i+k-2\al)-2}t^{i+k-2\al+1})$ is admissible in $\La_{\textbf{j}''}$ and gives exactly $\La_\textbf{j}$. 
    We have $ht_i(\textbf{j}')=ht_i(\textbf{j})+1$, and $ht_i(\textbf{j}'')=ht_i(\textbf{j})-1$.

    \item[g)] There exists $1\le\al<\beta\le i$ such that $j_\al=k$, $j_\beta=\overline{k}$, and for all other $1\le\gamma\le i$, $j_\gamma\not\in\{ \overline{k+1}, {k+1}\}$. In this case, $$\Pi_k(\La_\textbf{j})=Y_k(zq^{-2(i+k-2\al)}t^{i+k-2\al})Y_k(zq^{-4\el+2k-2i+4\beta-2}t^{2\el-k+i-2\beta+2})^{-1}.$$
    A straightforward computation shows that this never contradicts the regularity condition (we never have $\Pi_k(\La_\textbf{j})=Y_k(za)Y_k(zaq^{-2r_k})$ nor $\Pi_k(\La_\textbf{j})=Y_k(za)Y_k(zat^2)$).
    Moreover, the algorithm produces a monomial $\La_{\textbf{j}'}$ with $\textbf{j}'=(j_1,\dots,j_{\al-1}, j_\al+1, j_{\al+1},\dots,j_i)$, which is in $S$. \\
    Let $\textbf{j}''=(j_1,\dots,j_{\beta-1}, j_\beta-1, j_{\beta+1},\dots,j_i)$, which is in $S$. The monomial $\La_{\textbf{j}''}$ is in the case (e) and the transformation $A_k(zq^{-4\el+2k-2i+4\beta}t^{2\el-k+i-2\beta+1})^{-1}$ is admissible in $\La_{\textbf{j}''}$ and gives exactly $\La_\textbf{j}$. 
    We have $ht_i(\textbf{j}')=ht_i(\textbf{j})+1$, and $ht_i(\textbf{j}'')=ht_i(\textbf{j})-1$.

    \item[h)] There exists $1\le\al<\beta\le i$ such that $j_\al=k+1$, $j_\beta=\overline{k}$, and for all other $1\le\gamma\le i$, $j_\gamma\not\in\{ \overline{k+1}, {k}\}$. In this case, $$\Pi_k(\La_\textbf{j})=Y_k(zq^{-2(i+k-2\al)-4}t^{i+k-2\al+2})^{-1}Y_k(zq^{-4\el+2k-2i+4\beta-2}t^{2\el-k+i-2\beta+2})^{-1}.$$
    In the first spectral parameter $q^{-2}=q^{-r_k}$ and $t$ play symmetric roles and it is not the case in the second term so we are not in the case of a $Y_k(za)Y_k(zaq^{-2r_k}t^2)$.\\
    Let $\textbf{j}'=(j_1,\dots,j_{\al-1}, j_\al-1, j_{\al+1},\dots,j_i)$, which is in $S$. The monomial $\La_{\textbf{j}'}$ is in the case (g) and the transformation $A_k(zq^{-2(i+k-2\al)-2}t^{i+k-2\al+1})^{-1}$ is admissible in $\La_{\textbf{j}'}$ and gives exactly $\La_\textbf{j}$. 
    We have $ht_i(\textbf{j}')=ht_i(\textbf{j})-1$. We could exhibit another monomial such that an admissible transformation gives exactly $\La_\textbf{j}$, but it is not necessary for the proof.
\end{itemize}

If $k=\el$, we are in one of the following cases :
\begin{itemize}
    \item[i)] There exists $1\le\al\le i$ and $s\geq0$ such that $j_\al=\el$, $j_{\al+1}=\ldots=j_{\al+s}=\el+1$ and $j_{\al+s+1}\not\in\{\el+1, \bar\el\}$.
    In this case, $$\Pi_\el(\La_\textbf{j})=Y_\el(zq^{-2\el-2i+4\al-1}t^{\el+i-2\al})Y_\el(zq^{-2\el-2i+4\al+4s+1}t^{\el+i-2\al-2s}).$$ 
    Hence if $s=0$ only the first $Y_\el$ is admissible. If $s>0$, both are admissible in the monomial $\La_\textbf{j}$ in the direction $\el$. The algorithm apply these transformations to get $\La_{\textbf{j}'}$ (resp.\ $\La_{\textbf{j}''}$) with $j_\al'=j_\al+1=\el+1$ (resp.\ $j_{\al+s}''=j_{\al+s}+1=\bar\el$), and for all $\gamma\ne\al$, (resp.\ $\gamma\ne\al+s$) $j_\gamma'=j_\gamma$ (resp.\ $j_\gamma''=j_\gamma$). 
    Both $\textbf{j}'$ and $\textbf{j}''$ are in $S$. Moreover, we have $ht_i(\textbf{j}'')=ht_i(\textbf{j}')=ht_i(\textbf{j})+1$.

    \item[ii)] There exists $1\le\al\le i$ and $s\geq0$ such that $j_\al=\el$, $j_{\al+1}=\ldots=j_{\al+s}=\el+1$ and $j_{\al+s+1}= \bar\el$. 
    Here, $$\Pi_\el(\La_\textbf{j})=Y_\el(zq^{-2\el-2i+4\al-1}t^{\el+i-2\al})Y_\el(zq^{-2\el-2i+4\al+4s+3}t^{\el+i-2\al-2s})^{-1}.$$ It is clear that it does not contradict the regularity conditoin.
    Then, there is a unique admissible transformation in the monomial $\La_\textbf{j}$ in the direction $k$. The algorithm apply this transformation to get $\La_{\textbf{j}'}$ with $j_\al'=j_\al+1=\el+1$ and for all $\gamma\ne\al$, $j_\gamma'=j_\gamma$. 
    Let $\textbf{j}''=(j_\gamma'')_\gamma\in S$ such that $j''_\gamma=j_\gamma$ for $\gamma\ne\al+s+1$ and $j''_{\al+s+1}=j_{\al+s+1}-1=\el+1$. The monomial $\La_{\textbf{j}''}$ is in the case (i) and the transformation $A_\el(zq^{-2\el-2i+4\al+4s+4}t^{\el+i-2\al-2s-1})^{-1}$ is admissible in $\La_{\textbf{j}''}$ and gives exactly $\La_\textbf{j}$.
    We have $ht_i(\textbf{j}')=ht_i(\textbf{j})+1$, and $ht_i(\textbf{j}'')=ht_i(\textbf{j})-1$.

    \item[iii)] There exists $1\le\al\le i$ and $s\geq0$ such that $j_{\al-1}\neq \el$, $j_\al=j_{\al+1}=\ldots=j_{\al+s}=\el+1$ and $j_{\al+s+1}\neq \el+1,\bar\el$. 
    In this case, $$\Pi_\el(\La_\textbf{j})=Y_\el(zq^{-2\el-2i+4\al+4s+1}t^{\el+i-2\al-2s})Y_\el(zq^{-2\el-2i+4\al-3}t^{\el+i-2\al+2})^{-1}.$$ 
    Again, it is clear that it does not contradict the regularity condition. 
    There is still a unique admissible transformation in the monomial $\La_\textbf{j}$ in direction $k$. 
    Hence, the algorithm apply this transformation to get $\La_{\textbf{j}'}$ with $j_{\al+s}'=j_{\al+s}+1=\bar\el$, and for all $\gamma\ne\al+s$, $j_\gamma'=j_\gamma$. 
    Let $\textbf{j}''=(j_\gamma'')_\gamma\in S$ such that $j''_\gamma=j_\gamma$ for $\gamma\ne\al$ and $j''_{\al}=j_{\al}-1=\el$. The monomial $\La_{\textbf{j}''}$ is in the case (i) and the transformation $A_\el(zq^{-2\el-2i+4\al-2}t^{\el+i-2\al-2s+1})^{-1}$ is admissible in $\La_{\textbf{j}''}$ and gives exactly $\La_\textbf{j}$.
    We have $ht_i(\textbf{j}')=ht_i(\textbf{j})+1$, and $ht_i(\textbf{j}'')=ht_i(\textbf{j})-1$.
    
    \item[iv)] There exists $1\le\al\le i$ and $s\geq0$ such that $j_{\al}=\bar\el$, $j_{\al-1}=\ldots=j_{\al-s}=\el+1$ and $j_{\al-s-1}< \el$. 
    Here, $$\Pi_\el(\La_\textbf{j})=Y_\el(zq^{-2\el-2i+4\al-1}t^{\el+i-2\al+2})^{-1}Y_\el(zq^{-2\el-2i+4\al-4s-3}t^{\el+i-2\al+2s+2})^{-1}.$$ It is clear that it does not contradict the regularity conditoin.
    Let $\textbf{j}'=(j_\gamma')_\gamma\in S$ such that $j'_\gamma=j_\gamma$ for $\gamma\ne\al$ and $j'_{\al}=j_{\al}-1=\el+1$. The monomial $\La_{\textbf{j}'}$ is in the case (iii) and the transformation $A_\el(zq^{-2\el-2i+4\al}t^{\el+i-2\al+1})$ is admissible in $\La_{\textbf{j}'}$ and gives exactly $\La_\textbf{j}$.
    We have $ht_i(\textbf{j}')=ht_i(\textbf{j})-1$. We could exhibit another monomial such that an admissible transformation gives exactly $\La_\textbf{j}$, but it is not necessary for the proof.
\end{itemize}

Thus, similarly, we can apply the same proof as Proposition \ref{fundamb} and prove by induction that all the monomials obtained by the algorithm at step $n$ are exactly the monomials $\La_\textbf{j}$ such that $ht_i(\textbf{j})=n$. Hence, $T_i(z)$ belongs to $\WWqt$ for all $1\le i\le \el-1$.\\

Now we consider $T_\el(z)$. Let us prove by induction that for all $n$, all the monomials obtained at step $n$ of the algorithm are in $T_\el(z)$. We have $Y_\el(z)=\prod_k b_{-1}(zq^{2(k-1)}t^{-(k-1)}|k)$.  For $\sigma=(\sigma_i)_i\in\{\pm1\}^\el$, $b_\sigma$ will denote the following monomial :
$$b_\sigma:=b_{\sigma_1}(z|1)\,
b_{\sigma_2}(z q^{-2\sigma_1}t|2)
\cdots
b_{\sigma_\el}(z q^{-2\sigma_1-\cdots-2\sigma_{\el-1}}t^{\sigma_1+\ldots+\sigma_{\el-1}}|\el)$$
If $M$ is a monomial obtained at step $n$, if $M$ is anti-dominant then the algorithm stops and there is no step $n+1$. Else, there exists $k$ such that for a $a\in  \C^* q^\Z t^\Z$, $d_{k,a}(M)>0$.
It implies $\sigma_k=-1$ and $\sigma_{k+1}\neq-1$, and $Y_k(za)$ is admissible in $b_\sigma$. Let $k$ be such an integer. Then, we define a monomial $b_{\sigma'}$ such that $\sigma'_i=\sigma_i$ if $i\not\in\{k,k+1\}$ and $\sigma_i'=-\sigma_i $ for $i\in\{k,k+1\}$ (we take the convention $\sigma_{\el+1}=0$). It is easy to see that $b_{\sigma'}$ is the monomial given by the algorithm by taking the expansion in the $k^\text{th}$ $\sld_2$ direction in $b_\sigma$. Hence all the monomials in the algorithm appear in the sum $\eqref{spinorCl}$.\\
Conversely, let $\sigma^{(1)}<\ldots <\sigma^{(2^\el)} $ be the non-decreasing set of elements of $\{\pm1\}^\el$ with respect to the lexicographic order. \\
We have $\sigma^{(1)}_i=-1$ for all $i$. Hence $b_{\sigma^{(1)}}=Y_\el(z)$ appears in the algorithm. Now we assume the monomial $b_{\sigma^{(i)}}$ appears in the algorithm for all $i\le n$ for a $1\le n\le2^\el-1$. Let us prove that $b_{\sigma^{(n+1)}}$ appears in the algorithm. $\sigma^{(n+1)}>\sigma^{(1)} $ then there exists $j$ such that $\sigma^{(n+1)}_j=1$. Let $k=\max\{j,\sigma^{(n+1)}_j=1\}$. By definition of $k$, $\sigma^{(n+1)}_{k+1}\neq1$ Let $\sigma\in\{\pm1\}^\el$ such that $\sigma_i=\sigma^{(n+1)}_i$ if $i\not\in\{k,k+1\}$ and $\sigma_i=-\sigma^{(n+1)}_i$ if $i\in\{k,k+1\}$. Hence there exists $m<n+1$ such that $\sigma=\sigma^{(m)}$. By the induction assumption $b_\sigma$ appears in the algorithm. Again, it is easy to see that $b_{\sigma^{(n+1)}}$ is the monomial obtained in the algorithm by taking the expansion in the $k^\text{th}$ $\sld_2$ direction in $b_\sigma$. Hence the result.
\end{proof}

\subsubsection{Type $D_\el$ :} 
The Cartan matrix is 
\[
C=
\begin{pmatrix}
2 & -1 & \cdots & 0 & 0 & 0\\
  -1& 2 & -1 & 0 & 0 & 0\\
\vdots  & \ddots  & \ddots & \ddots & 0 & 0\\
0 & 0 &  -1 & 2 & -1 & -1\\
0  & 0 & 0 & -1 & 2 & 0\\
0 & 0 & 0 & -1 & 0 & 2
\end{pmatrix},
\]
\begin{center}
\begin{tikzpicture}[scale=1.2, baseline=(current bounding box.center)]

\tikzstyle{point}=[circle,fill,inner sep=1.5pt]

\node[point] (v1) at (0,0) {};
\node[point] (v2) at (1.2,0) {};
\node (vdots) at (2.4,0) {$\cdots$};
\node[point] (v4) at (3.6,0) {};
\node[point] (v5) at (4.8,0.8) {};
\node[point] (v6) at (4.8,-0.8) {};

\node[below=2pt] at (v1) {1};
\node[below=2pt] at (v2) {2};
\node[below=2pt] at (v4) {$\el\!-\!2$};
\node[right=4pt] at (v5) {$\el\!-\!1$};
\node[right=4pt] at (v6) {$\el$};

\draw (v1) -- (v2);
\draw[dotted,thick] (v2) -- (vdots);
\draw (vdots) -- (v4);
\draw (v4) -- (v5);
\draw (v4) -- (v6);
\end{tikzpicture}

\end{center}

The fundamental field $T_1(z)$ is given in \cite{MR1646483} and $T_{\el-1}(z)$, $T_\el(z)$ are deduced from the formulas of the associated fundamental $q$-characters (corresponding to the spinor representations of $\widehat{\mathfrak{so}_{2\el}}$) in \cite{frenkel1996quantum}.
We define the set $S=\{1,\ldots,\el,\overline\el,\dots,\overline1\}$:
\begin{align*}
\Lambda_i(z) &:= Y_i(zq^{-i+1}t^{i-1})\,Y_{i-1}(zq^{-i}t^{i})^{-1}, && i = 1, \ldots, \ell - 2, \\[4pt]
\Lambda_{\ell-1}(z) &:= Y_{\ell}(zq^{-\ell+2}t^{\ell-2})\,Y_{\ell-1}(zq^{-\ell+2}t^{\ell-2})\,Y_{\ell-2}(zq^{-\ell+1}t^{\ell-1})^{-1}, \\[4pt]
\Lambda_{\ell}(z) &:= Y_{\ell}(zq^{-\ell+2}t^{\ell-2})\,Y_{\ell-1}(zq^{-\ell}t^{\ell})^{-1}, \\[4pt]
\Lambda_{\bar{\ell}}(z) &:= Y_{\ell-1}(zq^{-\ell+2}t^{\ell-2})\,Y_{\ell}(zq^{-\ell}t^{\ell})^{-1}, \\[4pt]
\Lambda_{\overline{\ell-1}}(z) &:= Y_{\ell-2}(zq^{-\ell+1}t^{\ell-1})\,Y_{\ell-1}(zq^{-\ell}t^{\ell})^{-1}\,Y_{\ell}(zq^{-\ell}t^{\ell})^{-1}, \\[4pt]
\Lambda_{\bar{i}}(z) &:= Y_{i-1}(zq^{-2\ell+i+2}t^{2\ell-i-2})\,Y_i(zq^{-2\ell+i+1}t^{2\ell-i-1})^{-1}, && i = 1, \ldots, \ell - 2.
\end{align*}
 Then, we define the first fundamental field as follows:
 $$\s_1(z) = \sum_{j=1}^\el \La_{j}(z),$$
with $\quad i=1,\ldots,\el$. \\

We also define the fundamental fields $T_{\el-1},T_\el$ as follows :
In these formulas the subscript $\varepsilon$ means $\el$ 
if ${\varepsilon} = +$, and $\el - 1$, if $\varepsilon = -$.
Thus, $T_{+}(z) = T_{\el}(z)$, $T_{-}(z) = T_{\el-1}(z)$.
Now let
\[\label{spinorDl}
T_{\varepsilon}(z)
=
\sum_{\sigma_j =\pm 1}
b_{\sigma_1}(z|1)
b_{\sigma_2}(z q^{1-\sigma_1}t^{-1+\sigma_1}|2)
\cdots
b^{\varepsilon{\sigma_1\cdots\sigma_{\el-1}}}_{\sigma_{\el-1}}
(z q^{\el-2-\sigma_1-\cdots-\sigma_{\el-2}}t^{-\el+2+\sigma_1+\cdots+\sigma_{\el-2}}|\el-1),
\]
where
\[
\begin{aligned}
b^{\varepsilon}_{1}(z|\el-1) &= Y_{\varepsilon}(z q^{-\el}t^\el)^{-1}, \\[4pt]
b_{1}(z|k) &= 1, \qquad k = 1, \ldots, \el-2, \\[4pt]
b^{\varepsilon}_{-1}(z|\el-1) &= Y_{\el-2}(z q^{-\el+1}t^{\el-1})^{-1} Y_{\varepsilon}(z q^{-\el+2}t^{\el-2}), \\[4pt]
b_{-1}(z|k) &= Y_{k-1}(z q^{-\el+k-1}t^{\el-k+1})^{-1}
Y_{k}(z q^{-\el+k}t^{\el-k}), \qquad k = 1, \ldots, \el-2.
\end{aligned}
\]
\begin{prop}
    For all $i\in\{1,\el-1,\el\}$, $T_i(z)$ belongs to $\WWqt$.
\end{prop}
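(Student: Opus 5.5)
The plan follows the scheme used for types $C_\el$ and $B_\el$. By Theorem \ref{theoremalgowork}, it suffices to show that the algorithm started at $Y_i(z)$, for $i\in\{1,\el-1,\el\}$, works. Concretely, every monomial it produces must be generic and regular, and the set $\mathbf{A}(Y_i(z))$ must be exactly the set of monomials in the proposed expression of $T_i(z)$. The coefficients are then the ones produced by the algorithm, and Theorem \ref{theoremalgowork} gives $T_i(z)\in\WWqt$. This is in the spirit of Theorem 6.1 of \cite{fjm1}.

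For $T_1(z)$, the monomials are the $\La_j(z)$, $j\in\{1,\dots,\el,\bar\el,\dots,\bar1\}$. I would first list, for each $k\in I$, the projections $\Pi_k(\La_j)$. Each is either $1$, a single $Y_k(za)$, or a single $Y_k(za)^{-1}$, apart from the branching cases:
\begin{itemize}
\item at $k=\el-2$, the monomial $\La_{\el-1}$ contains $Y_{\el-1}Y_\el$ at the same spectral parameter;
\item at $k\in\{\el-1,\el\}$, the monomials $\La_\el,\La_{\bar\el}$ each involve both $Y_{\el-1}$ and $Y_\el$.
\end{itemize}
Since every $\La_j$ involves at most one variable per direction, genericity and regularity are immediate. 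I would then check, as in cases (a)--(d) of Proposition \ref{fundamb}, that each admissible $Y_k(za)$ in $\La_j$ yields exactly one successor in the list:
\begin{itemize}
\item $\La_k\to\La_{k+1}$ for $k\le\el-2$;
\item $\La_{\el-1}\to\La_\el$ and $\La_{\el-1}\to\La_{\bar\el}$ (directions $\el-1$ and $\el$);
\item $\La_\el\to\La_{\overline{\el-1}}$ and $\La_{\bar\el}\to\La_{\overline{\el-1}}$;
\item $\La_{\bar k}\to\La_{\overline{k-1}}$.
\end{itemize}
Each transition is verified by directly comparing spectral parameters using $A_k(z)=Y_k(zq^{-1}t)Y_k(zqt^{-1})\prod_{j\sim k}Y_j(z)^{-1}$. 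Conversely, every non-dominant $\La_j$ has a predecessor. An induction on the height, i.e.\ on the position in the order $1<\dots<\el-1<\{\el,\bar\el\}<\overline{\el-1}<\dots<\bar1$, then shows that $\mathbf{A}(Y_1(z))$ is exactly this set, and the algorithm terminates at the antidominant $\La_{\bar1}$.

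For $T_{\pm}(z)$, I would copy the argument for the spinor field of type $B_\el$. Write $b_\sigma$ for the summand indexed by $\sigma\in\{\pm1\}^{\el-1}$, noting that the top superscript $\varepsilon\sigma_1\cdots\sigma_{\el-1}$ flips with the parity of $\sigma$.
\begin{itemize}
\item First check that $b_{(-1,\dots,-1)}$ equals $Y_\varepsilon(z)$ and that each $b_\sigma$ is generic and regular. The Weyl-group (minuscule) structure makes each $\Pi_k(b_\sigma)$ equal to either $1$ or a single $Y_k^{\pm1}$.
\item Show that $Y_k(za)$ is admissible in $b_\sigma$ exactly when $(\sigma_k,\sigma_{k+1})=(-1,1)$ for $k\le\el-2$, or when $\sigma_{\el-1}=-1$ with the appropriate parity for the two spin nodes. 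The algorithm then replaces $b_\sigma$ by $b_{\sigma'}$, where $\sigma'$ flips the relevant pair, or flips $\sigma_{\el-2},\sigma_{\el-1}$ for the extra node, using $A_{\el-1},A_\el$ attached to node $\el-2$.
\item Conversely, by induction in lexicographic order on $\sigma$, each $b_\sigma$ is reached from some smaller $b_{\sigma'}$.
\end{itemize}

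The main obstacle is the bookkeeping at the fork of the diagram: computing the exact spectral shifts of $A_{\el-1},A_\el$ acting on the node-$(\el-2)$ variables. In particular, for $T_\pm$ one must show that no non-regular product $Y_k(za)Y_k(zaq^{-2}t^2)$ without its companions ever arises. This is precisely the failure mode seen in Remark \ref{faild4} for $i=2$ in $D_4$. I would first test $\el=4$ by hand to confirm the shifts in the $b$'s, then generalise.
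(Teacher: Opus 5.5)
Your plan is sound. For $T_{\el-1},T_\el$ it is the paper's proof: reduce to Theorem~\ref{theoremalgowork}, show that every admissible move sends $b_\sigma$ to some $b_{\sigma'}$, and reach every $b_\sigma$ by induction in lexicographic order.

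The genuine difference is $T_1$. The paper simply cites Frenkel--Reshetikhin \cite{MR1646483}, whereas you run the algorithm on the $2\el$ monomials $\La_j$. You rightly take all of $\{1,\dots,\el,\bar\el,\dots,\bar1\}$; the paper's displayed $\sum_{j=1}^{\el}$ is a misprint. Your spectral checks do go through, for example $\La_{\el-1}A_{\el-1}(zq^{-\el+1}t^{\el-1})^{-1}=\La_\el$ and $\La_{\bar i}A_{i-1}(zq^{-2\el+i+1}t^{2\el-i-1})^{-1}=\La_{\overline{i-1}}$. Your route costs a few lines, but it stays inside the paper's own formal setting: coefficients in $\C[[h,\beta]]$, screening currents without zero-mode factors, spectral parameters in $\C^*q^\Z t^\Z$. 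The citation tacitly assumes this setting is compatible with Frenkel--Reshetikhin's. Your route also treats the three nodes uniformly, while the citation is shorter.

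There are three things to tighten.

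\begin{enumerate}
\item Every monomial, for $T_1$ as well as for $T_\pm$, has at most one variable in each direction. So the products in \eqref{coefalg} are empty and all coefficients equal $1$. Say this explicitly, since the displayed $T_i$ carry no coefficients.

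\item At the fork you conflate two different moves. In the $(\el-1)$-parametrisation, the moves at the two spin nodes flip $\sigma_{\el-1}$ alone, and the superscript decides which of $Y_{\el-1},Y_\el$ occurs. It is node $\el-2$ whose move flips the pair $(\sigma_{\el-2},\sigma_{\el-1})$. The paper encodes this by $k'=\min(k,\el-1)$ with the convention $\sigma_\el=0$. Your phrase about flipping $\sigma_{\el-2},\sigma_{\el-1}$ ``for the extra node'' mixes these two cases.

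\item Your planned hand check at $\el=4$ is not optional. With the shifts as printed, the $Y_1$ factors of $b_{-1}(z|1)$ and $b_{-1}(zq^{2}t^{-2}|2)$ sit at $zq^{-\el+1}t^{\el-1}$ and $zq^{-\el+3}t^{\el-3}$. Hence $b_{(-1,\dots,-1)}$ does not telescope to $Y_\varepsilon(z)$. Moreover, with superscript $\varepsilon\sigma_1\cdots\sigma_{\el-1}$ and $b^\varepsilon_1=Y_\varepsilon^{-1}$, flipping $\sigma_{\el-1}$ is not a single $A^{-1}$-move. You must fix the shift and superscript conventions before the flip argument applies. For instance, let the superscript involve only $\sigma_1\cdots\sigma_{\el-2}$.
\end{enumerate}
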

\begin{proof}
In \cite{MR1646483}, the authors proved that $T_1(z) $ belongs to $\WWqt$. The nodes $\el-1$ and $\el$ play symmetric roles then it is sufficient to prove the proposition for $\varepsilon=+$.\\
 Let us prove by induction that for all $n$, all the monomials obtained at step $n$ of the algorithm are in $T_\el(z)$. We have $$Y_\el(z)=\left(\prod_{k=2}^{\el-2} b_{-1}(zq^{2(k-1)}t^{-2(k-1)}|k)\right)b^{\varepsilon{\sigma_1\cdots\sigma_{\el-1}}}_{\sigma_{\el-1}}
(z q^{\el-2-\sigma_1-\cdots-\sigma_{\el-2}}t^{-\el+2+\sigma_1+\cdots+\sigma_{\el-2}}|\el-1)$$
 For $\sigma=(\sigma_i)_i\in\{\pm1\}^\el$, $b_\sigma$ will denote the following monomial :
$$b_\sigma:=b_{\sigma_1}(z|1)
b_{\sigma_2}(z q^{1-\sigma_1}t^{-1+\sigma_1}|2)
\cdots
b^{\varepsilon{\sigma_1\cdots\sigma_{\el-1}}}_{\sigma_{\el-1}}
(z q^{\el-2-\sigma_1-\cdots-\sigma_{\el-2}}t^{-\el+2+\sigma_1+\cdots+\sigma_{\el-2}}|\el-1)$$
If $M$ is a monomial obtained at step $n$, if $M$ is anti-dominant then the algorithm stops and there is no step $n+1$. Else, there exists $k$ such that for a $a\in \C^* q^\Z t^\Z$, $d_{k,a}(M)>0$. Let $k'={\min(k,\el-1)}$
It implies $\sigma_{k'}=-1$ and $\sigma_{k'+1}\neq-1$, and $Y_k(za)$ is admissible in $b_\sigma$. Let $k$ be such an integer. Then, we define a monomial $b_{\sigma'}$ such that $\sigma'_i=\sigma_i$ if $i\not\in\{k,k+1\}$ and $\sigma_i'=-\sigma_i $ for $i\in\{k',k'+1\}$ (we take the convention $\sigma_\el=0$). It is easy to see that $b_{\sigma'}$ is the monomial given by the algorithm by taking the expansion in the $k^\text{th}$ $\sld_2$ direction in $b_\sigma$. Hence all the monomials in the algorithm appear in the sum $\eqref{spinorDl}$.\\
Conversely, let $\sigma^{(1)}<\ldots <\sigma^{(2^\el)} $ be the non-decreasing set of elements of $\{\pm1\}^\el$ with respect to the lexicographic order. \\
We have $\sigma^{(1)}_i=-1$ for all $i$. Hence $b_{\sigma^{(1)}}=Y_\el(z)$ appears in the algorithm. Now we assume the monomial $b_{\sigma^{(i)}}$ appears in the algorithm for all $i\le n$ for a $1\le n\le2^\el-1$. Let us prove that $b_{\sigma^{(n+1)}}$ appears in the algorithm. $\sigma^{(n+1)}>\sigma^{(1)} $ then there exists $j$ such that $\sigma^{(n+1)}_j=1$. Let $k=\max\{j,\sigma^{(n+1)}_j=1\}$. By definition of $k$, $\sigma^{(n+1)}_{k+1}\neq1$. Let $\sigma\in\{\pm1\}^\el$ such that $\sigma_i=\sigma^{(n+1)}_i$ if $i\not\in\{k,k+1\}$ and $\sigma_i=-\sigma^{(n+1)}_i$ if $i\in\{k,k+1\}$. Hence there exists $m<n+1$ such that $\sigma=\sigma^{(m)}$. By the induction assumption $b_\sigma$ appears in the algorithm. Again, it is easy to see that $b_{\sigma^{(n+1)}}$ is the monomial obtained in the algorithm by taking the expansion in the $(k')^\text{th}$ $\sld_2$ direction in $b_\sigma$, with $k'=k$ if $k\le\el-2$, else $k'=\el-1$ (resp.\ $k'=\el$) if $\sigma^{(n+1)}_1\sigma^{(n+1)}_2\ldots\sigma^{(n+1)}_{\el-1}=-1$ (resp $+1$). Hence the result.
\end{proof}

\subsubsection{Fundamental fields in exceptional types} 
Our algorithm works in type $E_6$ for $i=1,5$, in type $E_7$ for $i=6$, in type $F_4$ for $i=1,4$ and in type $G_2$ for $i=1,2$.
Moreover, the expressions of the fundamental fields in these cases validate Conjecture \ref{conj1}. Our algorithm fails in all other cases. 
The reader can see the expressions of the fundamental fields in these cases in the appendix in https://assakaf.pages.math.cnrs.fr/algodwalg.pdf.\\

\section{Concluding remarks and perspectives}\label{sect7}

In \cite{MR1646483,MR1633032} or in \cite{frenkelhernandez2011langlands,FHR}, 
it is observed that the deformed $W$-algebra $\WWqt$ interpolates between $q$-characters and $t$-(twisted) characters of 
finite-dimensional representations of $U_q(\widehat{\g})$ and 
$U_t({}^L\widehat{\g})$ in the classical limits. 
In this paper, we proved that the limit $t\to1$ of the deformed $W$-algebra $\WWqt$ contains exactly all the $q$-characters of fundamental representations of $U_q(\widehat{\g})$ in types $A_\el,B_\el,C_\el,$ and $G_2$.
Moreover, the proof of the well-definedness of our algorithm gives a strong tool to exhibit fields of $\WWqt$ from a given dominant regular generic monomial as shown in Theorem \ref{theoremefinal}.
Hence, this framework opens the way to the study of the classical limits of $\WWqt$. 
This will be studied in an upcoming paper. \\

We can also remark that our algorithm requires the condition of genericity. 
However, non-generic fields appear for example in the work of Kimura and Noshita (see Appendix C.2 in \cite{kimuranoshita}). 
Our proofs of the well-definedness of the algorithm and of the fact that it produces fields of $\WWqt$ fail in the non-generic case. 
Indeed, the one-to-one correspondence between the delta functions explained in the proof of Theorem~\ref{theoremalgowork}(b) requires the genericity condition.
Hence, in this article we restricted ourselves to the generic and underived case, which allowed us to rigorously prove Conjecture \ref{conj1} in various new cases.
We also formulate the following conjecture :
\begin{conj}\label{conjjj}
    \begin{enumerate}
        \item Let $m\in\mathbf{M}$ be a dominant monomial. 
        Then the algorithm starting from $m$ terminates in finitely many steps (with or without failure).
        \item Let $m \in \mathbf{M}$ be a dominant regular generic monomial. 
        If the algorithm starting from $m$ creates a non-regular monomial, then there exists no field without derivative in $\mathbf{W}_{q,t}(\mathfrak{g})$ with $m$ as its unique dominant monomial.
    \end{enumerate}
\end{conj}
The first point is verified algorithmically for a large number of dominant monomials $m$ in all types of simple Lie algebras $\g$.

The second point would imply that Conjecture \ref{conj1} fails in some cases. 
For example in type $D_4$, for $i=2$, Remark \ref{faild4} would prove that a field with $Y_2(z)$ as its unique dominant monomial does not exist in $\WWqt$ without derivatives. 
We believe this second point is true due to a technical argument. In the computation of the residue of $[\Phi(z),S_i^\pm]$, 
we can prove that the only way to cancel the residue of a term $[M,S_i^\pm]$ is with another term of the form 
$[M\prod_rA_i(zc_r)^{\varepsilon_r},S_i^\pm]$. 
However, it seems that the only way for all terms to cancel each other is by following the algorithm described in Section \ref{sect5}.\\
For instance, we saw that the algorithm fails for all fields when $\g$ is of type $E_8$. 
Hence the conjecture would imply that the fields in $\mathbf{W}_{q,t}(E_8)$ without derivatives are exactly the constant fields $K$.\\

Finally, Remark \ref{thinh} highlights a link between the deformed $W$-algebras (or at least their subset of fields produced by our algorithm) and the thin fundamental representations of quantum affine algebras.
This link is still mysterious and deserves to be studied in depth. However, Theorem \ref{theoremefinal} shows that our algorithm works only for fundamental fields corresponding to fundamental representations which are thin. 
Thus, we formulate the following conjecture, refining a conjecture of Bouwknegt and Pilch (see Assumption 2.4 in \cite{MR1633032}):
\begin{conj}\label{conjjjjj}
Let $m$ be a dominant monomial whose $t=1$ specialization is the unique dominant monomial of a special thin representation of $U_q(\widehat{\g})$, that is, a representation whose $q$-character has a single dominant monomial and all coefficients equal to 1. 
Then, the algorithm successfully generates an element of $\WWqt$ from $m$.
\end{conj}
\begin{rem}
    This article proves this conjecture for fundamental fields (Theorem \ref{theoremefinal}, and see Section 7.3 in \cite{frenkel2015baxter} for the thinness of fundamental representations).
\end{rem}
However, the algorithm also works from monomials which, at the specialization $t=1$, are the dominant monomials of non-thin representations and the thinness is not a necessary condition for the algorithm to work.
For example, in type $A_1$, the algorithm works from $Y(z)Y(zq^{-2})$ which is the dominant monomial of the $q$-character of a thin Kirillov-Reshetikhin module. By symmetry, the algorithm also works from $Y(z)Y(zt^2)$ which specializes at $t=1$ to the dominant monomial of a non-thin representation. 
Another example is given by the dominant monomial $Y(z)Y(zq^{-2})Y(zt^2)Y(zq^{-2}t^2)$ in type $A_1$. The algorithm works from this monomial. However, this monomial specializes at $t=1$ (resp.\ $q=1$) to the unique dominant monomial of the $q$-character (resp.\ $t$-character) of a non-thin representation.

\bibliographystyle{amsplain}
\bibliography{articwalg}

\end{document}